\newtheorem{thm}[subsection]{Theorem}
\newtheorem{defn}[subsection]{Definition}
\newtheorem{prop}[subsection]{Proposition}
\newtheorem{cor}[subsection]{Corollary}
\newtheorem{lemma}[subsection]{Lemma}
\newtheorem{remark}[subsection]{Remark}
\theoremstyle{definition}
\numberwithin{equation}{section}
\def\C{{\bf C}}
\def\cP{{\cal P}}
\def\cA{{\cal A}}
\def\ra{\rightarrow}
\def\bra{\langle}
\def\ket{\rangle}
\def\C{{\bf C}}
\def\cA{{\mathcal A}}
\def\cB{{\mathcal B}}
\def\cD{{\mathcal D}}
\def\cE{{\mathcal E}}
\def\cF{{\mathcal F}}
\def\cI{{\mathcal I}}
\def\cJ{{\mathcal J}}
\def\cM{{\mathcal M}}
\def\cP{{\mathcal P}}
\def\cR{{\mathcal R}}
\def\cS{{\mathcal S}}
\def\cV{{\mathcal V}}
\def\cW{{\mathcal W}}
\def\gg{{\mathfrak g}}
\def\gl{{\mathfrak l}}
\def\gs{{\mathfrak s}}
\newcommand{\affine}[1]{\widehat{#1}}
\newcommand{\alg}[1]{\mathfrak{#1}}
\newcommand{\SLA}[2]{\alg{#1} \left( #2 \right)}
\newcommand{\AKMSA}[3]{\affine{\alg{#1}} \left( #2 \middle\vert #3 \right)}
\newcommand{\SLSA}[3]{\alg{#1} \left( #2 \middle\vert #3 \right)}
\newfont{\german}{eufm10}
\begin{document}
\pagestyle{plain}

\title
{The super $\cW_{1+\infty}$ algebra with integral central charge}

\author{Thomas Creutzig and Andrew R. Linshaw}

\address{Department of Mathematics, University of Alberta}
\email{creutzig@ualberta.ca}

\address{Department of Mathematics, University of Denver}
\email{andrew.linshaw@du.edu}

{\abstract \noindent The Lie superalgebra $\cS\cD$ of regular differential operators on the super circle has a universal central extension $\widehat{\cS\cD}$. For each $c\in \mathbb{C}$, the vacuum module $\cM_c(\widehat{\cS\cD})$ of central charge $c$ admits a vertex superalgebra structure, and $\cM_c(\widehat{\cS\cD}) \cong \cM_{-c}(\widehat{\cS\cD})$. The irreducible quotient $\cV_c(\widehat{\cS\cD})$ of the vacuum module is known as the super $\cW_{1+\infty}$ algebra. We show that for each integer $n>0$, $\cV_{n}(\widehat{\cS\cD})$ has a minimal strong generating set consisting of $4n$ fields, and we identify it with a $\cW$-algebra associated to the purely odd simple root system of $\gg\gl(n|n)$. Finally, we realize $\cV_{n}(\widehat{\cS\cD})$ as the limit of a family of commutant vertex algebras that generically have the same graded character and possess a minimal strong generating set of the same cardinality.}

\keywords{invariant theory; vertex algebra; deformation; orbifold construction; strong finite generation; $\cW$-algebra}
\maketitle
\section{Introduction}

Let $\cD$ denote the Lie algebra of regular differential operators on the circle. It has a universal central extension 
$\hat{\cD} = \cD \oplus \mathbb{C}\kappa$ which was introduced by Kac and Peterson in \cite{KP}. Although $\hat{\cD}$ admits a principal 
$\mathbb{Z}$-gradation and triangular decomposition, its representation theory is nontrivial because the graded pieces are all 
infinite-dimensional. The important problem of constructing and classifying the {\it quasifinite} irreducible, highest-weight 
representations (i.e., those with finite-dimensional graded pieces) was solved by Kac and Radul in \cite{KRI}. Explicit constructions of 
these modules were given in terms of the representation theory of $\widehat{\gg\gl}(\infty,R_m)$, which is a central extension of the Lie algebra of infinite matrices over $R_m = \mathbb{C}[t]/(t^{m+1})$ having only finitely many nonzero diagonal elements. 
The authors also classified all such $\hat{\cD}$-modules which are unitary.

In \cite{FKRW}, the representation theory of $\hat{\mathcal{D}}$ was developed from the point of view of vertex algebras. For each $c\in\mathbb{C}$, $\hat{\mathcal{D}}$ admits a module $\mathcal{M}_c$ called the {\it vacuum module}, which is a vertex algebra freely generated by fields $J^l$ of weight $l+1$, for $l\geq 0$. The highest-weight representations of $\hat{\mathcal{D}}$ are in one-to-one correspondence with the highest-weight representations of $\mathcal{M}_c$. The irreducible quotient of $\mathcal{M}_c$ by its maximal graded, proper $\hat{\cD}$-submodule $\cI_c$ is a simple vertex algebra, and is often denoted by $\mathcal{W}_{1+\infty,c}$. These algebras have been studied extensively in both the physics and mathematics literature (see for example \cite{AFMO}\cite{ASV}\cite{BS}\cite{CTZ}\cite{FKRW}\cite{KRII}), and they play an important role the theory of integrable systems. The above central extension is normalized so that $\cM_c$ is reducible if and only if $c\in \mathbb{Z}$. It was shown in \cite{FKRW} that for every integer $n\geq 1$, $\cI_n$ is generated as a vertex algebra ideal by a singular vector of weight $n+1$, and 
\begin{equation}\label{FKRW} \cW_{1+\infty,n} \cong \cW(\gg\gl(n)).\end{equation} 
In particular, $\cW_{1+\infty,n}$ has a minimal strong generating set consisting of a field in each weight $1,2,\dots, n$. The case of negative integral central charge is more complicated. It was shown in \cite{LI} that $\cI_{-n}$ is generated by a singular vector of weight $(n+1)^2$, and $\cW_{1+\infty,-n}$ has a minimal strong generating set consisting of a field in each weight $1,2,\dots, n^2+2n$. Wang showed in \cite{W} that $\cW_{1+\infty,-1}$ is isomorphic to $\cW(\gg\gl(3))$, but for $n>1$ it is not known if $\cW_{1+\infty,-n}$ can be identified with a standard $\cW$-algebra.

The super analogue of $\cD$ is the Lie superalgebra $\cS\cD$ of regular differential operators on the super circle $S^{1|1}$. 
As above, it has a universal central extension $\widehat{\cS\cD}$, and for each $c\in \mathbb{C}$, $\widehat{\cS\cD}$ admits a vacuum module $\cM_c(\widehat{\cS\cD})$. 
This module has a vertex superalgebra structure, and is freely generated by fields 
$$\{J^{0,k}, J^{1,k}, J^{+,k}, J^{-,k}|\ k\geq 0\}$$ 
of weights $k+1, k+1, k+1/2, k+3/2$, respectively. Unlike the modules $\cM_c$ which are all distinct, $\cM_c(\widehat{\cS\cD}) \cong \cM_{-c}(\widehat{\cS\cD})$ for all $c$. There are actions of the affine vertex superalgebra $V_c(\gg\gl(1|1))$ and the $N=2$ superconformal algebra $\cA_c$ of central charge $c$ on $\cM_c(\widehat{\cS\cD})$. Moreover, $\{J^{0,k}|\ k\geq 0\}$ and $\{J^{1,k}|\ k\geq 0\}$ generate copies of $\cM_c$ and $\cM_{-c}$, respectively, which form a Howe pair (i.e., a pair of mutual commutants) inside $\cM_c(\widehat{\cS\cD})$. 

The super $\cW_{1+\infty}$ algebra $\cV_c(\widehat{\cS\cD})$ is the unique irreducible quotient of $\cM_c(\widehat{\cS\cD})$ by its maximal proper graded $\widehat{\cS\cD}$-submodule $\cS\cI_c$. We denote the map $\cM_c(\widehat{\cS\cD}) \ra \cV_c(\widehat{\cS\cD})$ by 
$\pi_c$, and we denote $\pi_c(J^{a,k})$ by $j^{a,k}$ for $a = 0,1,\pm$. There are induced actions of $V_c(\gg\gl(1|1))$ and $\cA_c$ on $\cV_c(\widehat{\cS\cD})$, as well as mutually commuting copies of $\cW_{1+\infty,c}$ and $\cW_{1+\infty,-c}$ inside $\cV_c(\widehat{\cS\cD})$. 

For $n\in \mathbb{Z}$, $\cM_n(\widehat{\cS\cD})$ is reducible and the structure of $\cV_n(\widehat{\cS\cD})$ is nontrivial. Our main goal in this paper is to elucidate this structure. Since $\cV_n(\widehat{\cS\cD}) \cong \cV_{-n}(\widehat{\cS\cD})$ and $V_0(\widehat{\cS\cD}) \cong \mathbb{C}$, it suffices to consider the case $n\geq 1$. This problem was posed by Cheng and Wang; see Problem 3 at the end of \cite{CW}. Our starting point is a free field realization of $\cV_n(\widehat{\cS\cD})$ due to Awata, Fukuma, Matsuo, and Odake as the $GL_n$-invariant subalgebra of the $bc\beta\gamma$-system $\cF$ of rank $n$ \cite{AFMO}. We will show that $\cS\cI_n$ is generated as a vertex algebra ideal by a singular vector of weight $n+1/2$, and that $\cV_n(\widehat{\cS\cD})$ has a minimal strong generating set consisting of the following $4n$ fields: 
$$\{j^{0,k}, j^{1,k}, j^{+,k}, j^{-,k}|\ k=0,\dots, n-1\}.$$ Essentially, these results are formal consequences of Weyl's first and second fundamental theorems of invariant theory for the standard representation of $GL_n$.

Next we show that $\cV_n(\widehat{\cS\cD})$ admits a deformation as the limit of a family of commutant vertex algebras. The rank $n$ $bc\beta\gamma$-system $\cF$ has a natural action of $V_0(\gg\gl(n))$, and we obtain a diagonal homomorphism $V_k(\gg\gl(n)) \ra V_k(\gg\gl(n))\otimes \cF$ for all $k$. We define
$$ \cB_{n,k}= \text{Com}(V_k(\gg\gl(n)), V_k(\gg\gl(n)) \otimes \cF).$$ 
We have $\lim_{k\ra \infty} \cB_{n,k} = \cV_n(\widehat{\cS\cD})$, and for generic values of $k$, $\cB_{n,k}$ has a minimal strong generating set consisting of $4n$ generators and has the same graded character as $\cV_n(\widehat{\cS\cD})$.

Next, we consider a $\cW$-algebra $\cW_{n,k}$ associated naturally to the Lie superalgebra 
$\gg\gl(n|n)$, for $k\in \mathbb{C}$. It is defined as a certain subalgebra of the joint kernel of screening operators corresponding to the purely odd simple root system of $\gg\gl(n|n)$. We expect that $\cW_{n,k}$ coincides with the joint kernel of the screening operators, although we are unable to prove this at present. The $\cW$-algebras of simple affine Lie (super)algebras $\hat\gg$ \cite{FF1}\cite{FF2}\cite{KRW} are defined via the quantum Hamiltonian reduction, which is a certain semi-infinite cohomology. These $\cW$-algebras are associated to the principal embedding of $\gs\gl(2)$ in $\gg$, and they usually can also be realized as the joint kernel of screening operators corresponding to a simple root system of $\gg$. A simple root system of a Lie superalgebra is not unique, and in our case it turns out that a purely odd simple root system is most suitable. We will show that 
\begin{equation} \label{fkrwana} \cV_n(\widehat{\cS\cD})\cong \lim_{k\ra \infty} \cW_{n,k},\end{equation} 
and we regard this as an analogue of the isomorphism $\cW_{1+\infty,n} \cong \cW(\gg\gl(n))$.

Recall that for $n\geq 1$, $\cW_{1+\infty,n}$ and $\cW_{1+\infty,-n}$ have minimal strong generating sets $\{j^{0,k}|\ 0\leq k < n\}$ and $\{j^{1,k}|\ 0\leq k < n^2+2n\}$, respectively, and that only $\cW_{1+\infty,n}$ is known to be a standard $\cW$-algebra. We have identified $\cW_{1+\infty,n}$ and $\cW_{1+\infty,-n}$ as a Howe pair inside $\cV_n(\widehat{\cS\cD})$, which as we have seen is a $\cW$-algebra associated to $\gg\gl(n|n)$. As subalgebras of $\cV_n(\widehat{\cS\cD})$ they are on a similar footing, and in some sense it is more natural to consider them as a Howe pair inside $\cV_n(\widehat{\cS\cD})$, rather than as independent objects. 

Finally, in the case $n=2$, we find a minimal strong generating set for $\cW_{2,k}$ consisting of eight fields, and we show by explicit computation that $\cW_{2,k+2}$ has the same operator product algebra as $\cB_{2,k}$. More generally, we conjecture that $\cW_{n,k+n}$ is isomorphic to $\cB_{n,k}$ for all $k$ and $n$.

There is also a commutant realization of the deformable family of $\cW$-algebras of $\gs\gl(n)$, namely $\text{Com}(V_{k+1}(\gs\gl(n)),V_k(\gs\gl(n))\otimes V_1(\gs\gl(n)))$ \cite{BS}. In physics, for positive integer $k$, the corresponding conformal field theories are called $\cW_n$ minimal models, and they have received much attention recently as tentative dual theories to three dimensional higher spin gravity \cite{GG}. The supersymmetric analogue \cite{CHR} has the $\cW$-superalgebra of $\gs\gl(n+1|n)$ as coset algebra whose twisted algebra in turn is argued to be related to the $\cW$-superalgebra of $\gg\gl(n|n)$ \cite{I}.

\section{Vertex algebras}
In this section, we define vertex algebras, which have been discussed from various different points of view in the 
literature \cite{B}\cite{FBZ}\cite{FHL}\cite{FLM}\cite{K}\cite{LiI}\cite{LZ}. 
We will follow the formalism developed in \cite{LZ} and partly in \cite{LiI}. Let $V=V_0\oplus V_1$ be a super vector space 
over $\mathbb{C}$, and let $z,w$ be formal variables. By $\text{QO}(V)$, we mean the space of all linear maps 
$$V\ra V((z)):=\{\sum_{n\in\mathbb{Z}} v(n) z^{-n-1}|
v(n)\in V,\ v(n)=0\ \text{for} \ n>\!\!>0 \}.$$ Each element $a\in \text{QO}(V)$ can be
uniquely represented as a power series
$$a=a(z):=\sum_{n\in\mathbb{Z}}a(n)z^{-n-1}\in \text{End}(V)[[z,z^{-1}]].$$ We
refer to $a(n)$ as the $n^{\text{th}}$ {\it Fourier mode} of $a(z)$. Each $a\in
\text{QO}(V)$ is of the shape $a=a_0+a_1$ where $a_i:V_j\ra V_{i+j}((z))$ for $i,j\in\mathbb{Z}/2\mathbb{Z}$, and we write 
$|a_i| = i$.

On $\text{QO}(V)$ there is a set of nonassociative bilinear operations
$\circ_n$, indexed by $n\in\mathbb{Z}$, which we call the $n^{\text{th}}$ circle
products. For homogeneous $a,b\in \text{QO}(V)$, they are defined by
$$
a(w)\circ_n b(w)=\text{Res}_z a(z)b(w)~\iota_{|z|>|w|}(z-w)^n-
(-1)^{|a||b|}\text{Res}_z b(w)a(z)~\iota_{|w|>|z|}(z-w)^n.
$$
Here $\iota_{|z|>|w|}f(z,w)\in\mathbb{C}[[z,z^{-1},w,w^{-1}]]$ denotes the
power series expansion of a rational function $f$ in the region
$|z|>|w|$. We usually omit the symbol $\iota_{|z|>|w|}$ and just
write $(z-w)^{-1}$ to mean the expansion in the region $|z|>|w|$,
and write $-(w-z)^{-1}$ to mean the expansion in $|w|>|z|$. It is
easy to check that $a(w)\circ_n b(w)$ above is a well-defined
element of $\text{QO}(V)$.

The nonnegative circle products are connected through the {\it
operator product expansion} (OPE) formula.
For $a,b\in \text{QO}(V)$, we have \begin{equation}\label{opeform} a(z)b(w)=\sum_{n\geq 0}a(w)\circ_n
b(w)~(z-w)^{-n-1}+:a(z)b(w):\ ,\end{equation} which is often written as
$a(z)b(w)\sim\sum_{n\geq 0}a(w)\circ_n b(w)~(z-w)^{-n-1}$, where
$\sim$ means equal modulo the term $$
:a(z)b(w):\ =a(z)_-b(w)\ +\ (-1)^{|a||b|} b(w)a(z)_+.$$ Here
$a(z)_-=\sum_{n<0}a(n)z^{-n-1}$ and $a(z)_+=\sum_{n\geq
0}a(n)z^{-n-1}$. Note that $:a(w)b(w):$ is a well-defined element of
$\text{QO}(V)$. It is called the {\it Wick product} of $a$ and $b$, and it
coincides with $a\circ_{-1}b$. The other negative circle products
are related to this by
$$ n!~a(z)\circ_{-n-1}b(z)=\ :(\partial^n a(z))b(z):\ ,$$
where $\partial$ denotes the formal differentiation operator
$\frac{d}{dz}$. For $a_1(z),\dots ,a_k(z)\in \text{QO}(V)$, the $k$-fold
iterated Wick product is defined to be
\begin{equation}\label{iteratedwick} :a_1(z)a_2(z)\cdots a_k(z):\ =\ :a_1(z)b(z):,\end{equation}
where $b(z)=\ :a_2(z)\cdots a_k(z):$. We often omit the formal variable $z$ when no confusion can arise.

The set $\text{QO}(V)$ is a nonassociative algebra with the operations
$\circ_n$, which satisfy $1\circ_n a=\delta_{n,-1}a$ for
all $n$, and $a\circ_n 1=\delta_{n,-1}a$ for $n\geq -1$. In particular, $1$ behaves as a unit with respect to $\circ_{-1}$. 
A linear subspace $\cA\subset \text{QO}(V)$ containing $1$ which is closed under the circle products will be called a 
{\it quantum operator algebra} (QOA). Note that $\cA$ is closed under $\partial$
since $\partial a=a\circ_{-2}1$. Many formal algebraic
notions are immediately clear: a homomorphism is just a linear
map that sends $1$ to $1$ and preserves all circle products; a module over $\cA$ is a
vector space $M$ equipped with a homomorphism $\cA\rightarrow
\text{QO}(M)$, etc. A subset $S=\{a_i|\ i\in I\}$ of $\cA$ is said to generate $\cA$ if every element $a\in\cA$ can be written as a 
linear combination of nonassociative words in the letters $a_i$, $\circ_n$, for
$i\in I$ and $n\in\mathbb{Z}$. We say that $S$ {\it strongly generates} $\cA$ if every $a\in\cA$ can be written as a linear combination 
of words in the letters $a_i$, $\circ_n$ for $n<0$. Equivalently, $\cA$ is spanned by the collection 
$\{ :\partial^{k_1} a_{i_1}(z)\cdots \partial^{k_m} a_{i_m}(z):| ~i_1,\dots,i_m \in I,~ k_1,\dots,k_m \geq 0\}$.

We say that $a,b\in \text{QO}(V)$ {\it quantum commute} if $(z-w)^N
[a(z),b(w)]=0$ for some $N\geq 0$. Here $[,]$ denotes the super bracket. This condition implies that $a\circ_n b = 0$ for $n\geq N$, 
so (\ref{opeform}) becomes a finite sum. A {\it commutative quantum operator algebra} (CQOA) is a QOA whose elements pairwise quantum 
commute. Finally, the notion of a CQOA is equivalent to the notion of a vertex algebra. Every CQOA $\cA$ is itself a faithful 
$\cA$-module, called the {\it left regular
module}. Define
$$\rho:\cA\rightarrow \text{QO}(\cA),\qquad a\mapsto\hat a,\qquad \hat
a(\zeta)b=\sum_{n\in\mathbb{Z}} (a\circ_n b)~\zeta^{-n-1}.$$ Then $\rho$ is an injective QOA homomorphism,
and the quadruple of structures $(\cA,\rho,1,\partial)$ is a vertex
algebra in the sense of \cite{FLM}. Conversely, if $(V,Y,{\bf 1},D)$ is
a vertex algebra, the collection $Y(V)\subset \text{QO}(V)$ is a
CQOA. {\it We will refer to a CQOA simply as a
vertex algebra throughout the rest of this paper}.

Let $\cR$ be the category of vertex algebras $\cA$ equipped with a $\mathbb{Z}_{\geq 0}$-filtration
\begin{equation} \cA_{(0)}\subset\cA_{(1)}\subset\cA_{(2)}\subset \cdots,\qquad \cA = \bigcup_{k\geq 0}
\cA_{(k)}\end{equation} such that $\cA_{(0)} = \mathbb{C}$, and for all
$a\in \cA_{(k)}$, $b\in\cA_{(l)}$, we have
\begin{equation} \label{goodi} a\circ_n b\in\cA_{(k+l)},\qquad \text{for}\
n<0,\end{equation}
\begin{equation} \label{goodii} a\circ_n b\in\cA_{(k+l-1)}, \qquad \text{for}\
n\geq 0.\end{equation}
Elements $a(z)\in\cA_{(d)}\setminus \cA_{(d-1)}$ are said to have degree $d$.

Filtrations on vertex algebras satisfying (\ref{goodi})-(\ref{goodii})~were introduced in \cite{LiII}, and are known as {\it good increasing filtrations}. Setting $\cA_{(-1)} = \{0\}$, the associated graded object $\text{gr}(\cA) = \bigoplus_{k\geq 0}\cA_{(k)}/\cA_{(k-1)}$ is a $\mathbb{Z}_{\geq 0}$-graded associative, (super)commutative algebra with a
unit $1$ under a product induced by the Wick product on $\cA$. For each $r\geq 1$ we have the projection \begin{equation} \phi_r: \cA_{(r)} \ra \cA_{(r)}/\cA_{(r-1)}\subset \text{gr}(\cA).\end{equation} 
Moreover, $\text{gr}(\cA)$ has a derivation $\partial$ of degree zero
(induced by the operator $\partial = \frac{d}{dz}$ on $\cA$), and
for each $a\in\cA_{(d)}$ and $n\geq 0$, the operator $a\circ_n$ on $\cA$
induces a derivation of degree $d-k$ on $\text{gr}(\cA)$, which we denote by $a(n)$. Here $$k  = \text{sup} \{ j\geq 1|~ \cA_{(r)}\circ_n \cA_{(s)}\subset \cA_{(r+s-j)},~\forall r,s,n\geq 0\},$$ as in \cite{LL}. These derivations give $\text{gr}(\cA)$ the structure of a vertex Poisson algebra \cite{FBZ}.

The assignment $\cA\mapsto \text{gr}(\cA)$ is a functor from $\cR$ to the category of $\mathbb{Z}_{\geq 0}$-graded (super)commutative rings with a differential $\partial$ of degree zero, which we will call $\partial$-rings. A $\partial$-ring is just an {\it abelian} vertex algebra, that is, a vertex algebra $\cV$ in which $[a(z),b(w)] = 0$ for all $a,b\in\cV$. A $\partial$-ring $A$ is said to be generated by a subset $\{a_i|~i\in I\}$ if $\{\partial^k a_i|~i\in I, k\geq 0\}$ generates $A$ as a graded ring. The key feature of $\cR$ is the following reconstruction property \cite{LL}:

\begin{lemma}\label{reconlem}Let $\cA$ be a vertex algebra in $\cR$ and let $\{a_i|~i\in I\}$ be a set of generators for $\text{gr}(\cA)$ as a $\partial$-ring, where $a_i$ is homogeneous of degree $d_i$. If $a_i(z)\in\cA_{(d_i)}$ are vertex operators such that $\phi_{d_i}(a_i(z)) = a_i$, then $\cA$ is strongly generated as a vertex algebra by $\{a_i(z)|~i\in I\}$.\end{lemma}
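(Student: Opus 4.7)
The plan is to prove the lemma by induction on the filtration degree, reducing to the already-established correspondence between the Wick product on $\cA$ and the (super)commutative product on $\text{gr}(\cA)$.

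Let $\cA' \subset \cA$ denote the linear span of iterated Wick products $:\partial^{k_1} a_{i_1}(z) \cdots \partial^{k_m} a_{i_m}(z):$ for $i_j \in I$ and $k_j \geq 0$ (together with $1$). First I would verify that $\cA'$ inherits a good filtration: by property (\ref{goodi}), each such iterated Wick product lies in $\cA_{(d_{i_1} + \cdots + d_{i_m})}$, so setting $\cA'_{(r)} = \cA' \cap \cA_{(r)}$ gives a filtration compatible with the one on $\cA$. The goal reduces to showing $\cA'_{(r)} = \cA_{(r)}$ for every $r \geq 0$.

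I would proceed by induction on $r$. The base case $r = 0$ is immediate since $\cA_{(0)} = \mathbb{C} \subset \cA'$. For the inductive step, fix $r \geq 1$ and $b(z) \in \cA_{(r)}$, and consider its image $\phi_r(b(z)) \in \cA_{(r)}/\cA_{(r-1)}$. Since $\{a_i\}$ generates $\text{gr}(\cA)$ as a $\partial$-ring and the grading is respected, $\phi_r(b(z))$ can be written as a (super)commutative polynomial expression
\[
\phi_r(b(z)) = \sum_{\alpha} \lambda_\alpha \, \partial^{k_1^\alpha} a_{i_1^\alpha} \cdots \partial^{k_m^\alpha} a_{i_m^\alpha},
\]
where each monomial has total degree $d_{i_1^\alpha} + \cdots + d_{i_m^\alpha} = r$. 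Using the hypothesis $\phi_{d_i}(a_i(z)) = a_i$ together with the fact that the product in $\text{gr}(\cA)$ is induced by the Wick product (and that $\partial$ on $\text{gr}(\cA)$ is induced by $\partial = \tfrac{d}{dz}$ on $\cA$), I form the lift
\[
c(z) = \sum_{\alpha} \lambda_\alpha \, :\partial^{k_1^\alpha} a_{i_1^\alpha}(z) \cdots \partial^{k_m^\alpha} a_{i_m^\alpha}(z): \ \in \cA'_{(r)},
\]
and conclude that $\phi_r(c(z)) = \phi_r(b(z))$, hence $b(z) - c(z) \in \cA_{(r-1)}$. By the inductive hypothesis, $b(z) - c(z) \in \cA'_{(r-1)} \subset \cA'$, and therefore $b(z) \in \cA'$, completing the induction.

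The main subtlety, and the step I would be most careful about, is confirming that the associative-commutative multiplication on $\text{gr}(\cA)$ and its derivation $\partial$ really do come from the Wick product and from $\partial = \tfrac{d}{dz}$ on $\cA$, so that an arbitrary polynomial expression in the $\partial^{k} a_i$ inside $\text{gr}(\cA)$ admits a lift to $\cA_{(r)}$ via the corresponding iterated Wick product. This is precisely the content of the good-filtration axioms (\ref{goodi})--(\ref{goodii}), together with the definition of the induced product and derivation on $\text{gr}(\cA)$, so no additional machinery is needed beyond a careful unraveling of these definitions; the bookkeeping of degrees is the only thing to watch.
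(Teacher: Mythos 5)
Your argument is correct. The paper itself gives no proof of this lemma, citing \cite{LL} instead, and your induction on the filtration degree --- lifting a homogeneous polynomial expression for $\phi_r(b(z))$ in the $\partial^k a_i$ to the corresponding iterated Wick product, so that the difference drops into $\cA_{(r-1)}$ --- is exactly the standard argument used there; the points you flag as requiring care (that the product and derivation on $\text{gr}(\cA)$ are induced by the Wick product and by $\partial$, and that (\ref{goodi}) places the lift in $\cA_{(r)}$) are indeed the only things that need checking.
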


As shown in \cite{LI}, there is a similar reconstruction property for kernels of surjective morphisms in $\cR$. Let $f:\cA\rightarrow \cB$ be a morphism in $\cR$ with kernel $\cJ$, such that $f$ maps $\cA_{(k)}$ onto $\cB_{(k)}$ for all $k\geq 0$. The kernel $J$ of the induced map $\text{gr}(f): \text{gr}(\cA)\rightarrow \text{gr}(\cB)$ is a homogeneous $\partial$-ideal (i.e., $\partial J \subset J$). A set $\{a_i|~i\in I\}$ such that $a_i$ is homogeneous of degree $d_i$ is said to generate $J$ as a $\partial$-ideal if $\{\partial^k a_i|~i\in I,~k\geq 0\}$ generates $J$ as an ideal.

\begin{lemma} \label{idealrecon} Let $\{a_i| i\in I\}$ be a generating set for $J$ as a $\partial$-ideal, where $a_i$ is homogeneous of degree $d_i$. Then there exist vertex operators $a_i(z)\in \cA_{(d_i)}$ with $\phi_{d_i}(a_i(z)) = a_i$, such that $\{a_i(z)|\ i\in I\}$ generates $\cJ$ as a vertex algebra ideal.\end{lemma}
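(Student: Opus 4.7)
The plan is to lift the $\partial$-ideal generators $a_i\in J$ to vertex operators $a_i(z)\in \cJ$ and then show by induction on filtration degree that the vertex algebra ideal they generate exhausts $\cJ$.

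First I would construct the lifts. Since $\phi_{d_i}:\cA_{(d_i)}\to \cA_{(d_i)}/\cA_{(d_i-1)}$ is surjective onto the degree-$d_i$ piece of $\text{gr}(\cA)$, pick any $b_i(z)\in \cA_{(d_i)}$ with $\phi_{d_i}(b_i(z)) = a_i$. Because $a_i\in J$, the image $f(b_i(z))$ lies in $\cB_{(d_i-1)}$, and the hypothesis that $f$ maps $\cA_{(d_i-1)}$ onto $\cB_{(d_i-1)}$ supplies a $c_i(z)\in \cA_{(d_i-1)}$ with $f(c_i(z))=f(b_i(z))$. Setting $a_i(z):=b_i(z)-c_i(z)$ produces an element of $\cJ\cap \cA_{(d_i)}$ that still satisfies $\phi_{d_i}(a_i(z))=a_i$.

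Let $\cK\subset \cJ$ be the vertex algebra ideal generated by $\{a_i(z)\mid i\in I\}$. I would prove $\cJ\cap \cA_{(d)}\subset \cK$ by induction on $d$. The base case $d=0$ is immediate because $\cA_{(0)}=\mathbb{C}\cdot 1$ and $1\notin \cJ$. For the inductive step let $b(z)\in \cJ\cap \cA_{(d)}$. Since $\phi_d(b(z))\in J$ and $J$ is generated as a $\partial$-ideal by $\{a_i\}$, there is a decomposition
\[
\phi_d(b(z)) \;=\; \sum_j p_j\cdot \partial^{k_j} a_{i_j}
\]
in $\text{gr}(\cA)$, with each summand homogeneous of degree $d$. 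Choose filtered lifts $P_j(z)\in \cA$ of the $p_j$, and form
\[
\tilde b(z) \;:=\; \sum_j \; :P_j(z)\,\partial^{k_j} a_{i_j}(z):\,.
\]
Each summand lies in $\cK$ because $\cK$ is stable under $\partial$ and under left Wick product with arbitrary elements of $\cA$; by the good-filtration axioms (\ref{goodi})--(\ref{goodii}) we have $\tilde b(z)\in \cA_{(d)}$, and because the product on $\text{gr}(\cA)$ is induced by $\circ_{-1}$, $\phi_d(\tilde b(z))=\phi_d(b(z))$. Hence $b(z)-\tilde b(z)\in \cJ\cap \cA_{(d-1)}$, which lies in $\cK$ by the inductive hypothesis, so $b(z)\in \cK$.

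The step requiring genuine care is the initial correction passing from $b_i(z)$ to $a_i(z)$: without the surjectivity of $f$ at each filtered level, there is no reason for the lifts to land in $\cJ$. Once they do, the induction is essentially formal, being the vertex algebra analogue of the standard commutative fact that a graded ideal is generated by any lifts of a set of generators of its symbol ideal.
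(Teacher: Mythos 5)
Your argument is correct, and it is essentially the standard reconstruction argument: the paper itself states Lemma \ref{idealrecon} without proof, deferring to \cite{LI}, where the proof proceeds exactly as you do --- correct the naive lifts into $\cJ$ using the level-wise surjectivity of $f$, then induct on filtration degree, subtracting a normally ordered lift of the symbol to drop into $\cJ\cap\cA_{(d-1)}$. You have also correctly isolated the one nontrivial point, namely that without surjectivity of $f$ on each $\cA_{(k)}$ the lifts need not land in $\cJ$.
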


\section{The $\cW_{1+\infty}$ algebra}
Let $\cD$ be the Lie algebra of regular differential operators on $\mathbb{C}\setminus \{0\}$, with coordinate $t$. 
A standard basis for $\cD$ is $$J^l_k = -t^{l+k} (\partial_t)^l, \qquad k\in \mathbb{Z}, \qquad l\in \mathbb{Z}_{\geq 0},$$ where 
$\partial_t = \frac{d}{dt}$. An alternative basis is $\{t^k D^l|\ k\in \mathbb{Z},\ l\in \mathbb{Z}_{\geq 0}\}$, where $D = t\partial_t$. There is a 2-cocycle on $\cD$ given by 
\begin{equation}\label{cocycle} 
\Psi\bigg(f(t) (\partial_t)^m,  g(t) (\partial_t)^n\bigg) = \frac{m! n!}{(m+n+1)!} \text{Res}_{t=0} f^{(n+1)}(t) g^{(m)}(t) dt,
\end{equation} 
and a corresponding central extension $\hat{\cD} = \cD \oplus \mathbb{C} \kappa$, which was first studied by Kac and Peterson in \cite{KP}. 
$\hat{\cD}$ has a $\mathbb{Z}$-grading $\hat{\cD} = \bigoplus_{j\in\mathbb{Z}} \hat{\cD}_j$ by weight, given by
$$\text{wt} (J^l_k) = k, \qquad \text{wt} (\kappa) = 0,$$ and a triangular decomposition 
$\hat{\cD} = \hat{\cD}_+\oplus\hat{\cD}_0\oplus \hat{\cD}_-$, where $\hat{\cD}_{\pm} = \bigoplus_{j\in \pm \mathbb{N}} \hat{\cD}_j$ and $\hat{\cD}_0 = \cD_0\oplus \mathbb{C}\kappa.$

Let $\cP$ be the parabolic subalgebra of $\cD$ consisting of differential operators which extend to all of $\mathbb{C}$, which has a basis $\{J^l_k|\ l\geq 0,\ l+k\geq 0\}$. The cocycle $\Psi$ vanishes on $\cP$, 
so $\cP$ may be regarded as a subalgebra of $\hat{\cD}$, and $\hat{\cD}_0\oplus \hat{\cD}_+\subset \hat{\cP}$, where 
$\hat{\cP} = \cP\oplus \mathbb{C}\kappa$. Given $c\in \mathbb{C}$, let $\C_c$ denote the one-dimensional $\hat{\cP}$-module on which $\kappa$ acts by $c\cdot \text{id}$ and $J^{l}_k$ acts by zero. The induced $\hat{\cD}$-module $$\cM_c = U(\hat{\cD})\otimes_{U(\hat{\cP})} \C_c$$ is known as the {\it vacuum $\hat{\cD}$-module of central charge $c$}. $\cM_c$ has a vertex algebra structure and is generated by fields $$J^l(z) = \sum_{k\in\mathbb{Z}} J^l_k z^{-k-l-1}, \qquad l\geq 0$$ of weight $l+1$. The modes $J^l_k$ represent $\hat{\cD}$ on $\cM_c$, and we rewrite these fields in 
the form
$$J^l(z) = \sum_{k\in\mathbb{Z}} J^l(k) z^{-k-1},$$ 
where $J^l(k) = J^l_{k-l}$. In fact, $\cM_c$ is {\it freely} generated by $\{J^l(z)|~l\geq 0\}$; the set of iterated Wick products 
$$ :\partial^{i_1}J^{l_1}(z)\cdots \partial^{i_r} J^{l_r}(z):,$$ such that $l_1\leq \cdots \leq l_r$ and $i_a\leq i_b$ if $l_a = l_b$, forms a basis for $\cM_c$. 

A weight-homogeneous element $\omega\in \mathcal{M}_c$ is called a {\it singular vector} if $J^l\circ_k \omega = 0$ for all $k>l\geq 0$. The maximal proper $\hat{\mathcal{D}}$-submodule $\mathcal{I}_c$ is the vertex algebra ideal generated by all singular vectors $\omega\neq 1$, and the unique irreducible quotient $\mathcal{M}_c/\mathcal{I}_c$ is denoted by $\mathcal{W}_{1+\infty,c}$. We denote the image of $J^l$ in $\cW_{1+\infty,c}$ by $j^l$. The cocycle \eqref{cocycle} is normalized so that $\cM_c$ is reducible if and only if $c\in \mathbb{Z}$. For each integer $n \geq 1$, $\cI_n$ is generated by a singular vector of weight $n+1$, and $\cW_{1+\infty,n}\cong \mathcal{W}(\mathfrak{g}\mathfrak{l}(n))$, and in particular has a minimal strong generating set $\{j^0, j^1,\dots, j^{n-1}\}$ \cite{FKRW}. Similarly, it was shown in \cite{LI} that $\cI_{-n}$ is generated by a singular vector of weight $(n+1)^2$, and $\cW_{1+\infty,-n}$ has a minimal strong generating set $\{j^0, j^1,\dots, j^{n^2+2n-1}\}$. It is known \cite{W} that $\cW_{1+\infty,-1}\cong \cW(\gg\gl(3))$, but no identification of $\cW_{1+\infty,-n}$ with a standard $\cW$-algebra is known for $n>1$.

\section{The super $\cW_{1+\infty}$ algebra}
Following the notation in \cite{CW}, we denote by $\cS\cD$ the Lie superalgebra of regular differential operators on the super circle $S^{1|1}$. A standard basis for $\cS\cD$ is given by 
$$ t^{k+1} (\partial_t)^l \theta \partial_{\theta}, \qquad t^{k+1} (\partial_t)^l \partial_{\theta} \theta, \qquad t^{k+1} (\partial_t)^l \theta, \qquad t^{k+1} (\partial_t)^l \partial_{\theta},$$ for $l\in \mathbb{Z}_{\geq 0}$ and $k\in \mathbb{Z}$. Here $\theta$ is an odd indeterminate which commutes with $t$. The odd elements $\theta$ and $\partial_{\theta}$ generate a four-dimensional Clifford algebra $Cl$ with relation $\theta \partial_{\theta} + \partial_{\theta} \theta = 1$, and $\cS\cD = \cD \otimes Cl$.

Let $M(1,1)$ be the set of $2\times 2$ matrices of the form 
$$\left( \begin{matrix}  \alpha^0 & \alpha^+ \\ \alpha^-& \alpha^1  \end{matrix}\right),$$ 
where $\alpha^a \in \mathbb{C}$ for $a = 0,1,\pm$. There is a natural $\mathbb{Z}_2$-gradation on $M(1,1)$ where we define
$M_0 = \left(\begin{matrix} 1 & 0 \\ 0& 0  \end{matrix}\right)$ and $M_1= \left( \begin{matrix} 0 & 0 \\ 0& 1 \end{matrix}\right)$ to be even, and 
$M_+ = \left(\begin{matrix} 0 & 1 \\ 0& 0 \end{matrix}\right)$ and $M_- = \left(\begin{matrix} 0 & 0 \\ 1 & 0 \end{matrix}\right)$ to be odd. 
The supertrace $\text{Str}$ of the above matrix is $\alpha^0 - \alpha^1$. We have an isomorphism $Cl\cong M(1,1)$ of associative 
superalgebras given by 
$$M_0 \mapsto \partial_{\theta} \theta, \qquad M_1 \mapsto \theta \partial_{\theta}, \qquad M_+ \mapsto \partial_{\theta}, \qquad M_- \mapsto \theta.$$ 
Therefore we can regard $\cS\cD$ as the superalgebra of $2\times 2$ matrices with coefficients in $\cD$. Let $F(D)$ denote the matrix $$\left( \begin{matrix} f_0(D) & f_+(D) \\ f_-(D) & f_1(D)\end{matrix} \right), \qquad D = t\partial_t, \qquad f_a(x) \in \mathbb{C}[x],$$ which we regard as an element of $\cS\cD$. Define a $2$-cocycle $\Psi$ on $\cS\cD$ by
 \begin{equation}\label{supercocycle} 
\Psi(t^r F(D), t^s G(D)) = \left\{ \begin{matrix} \sum_{-r\leq j \leq -1} \text{Str} (F(j)G(j+r))   & r=-s \geq 0 \\ 0 & r+s \neq 0.  \end{matrix} \right .
\end{equation} 
We obtain a one-dimensional central extension $\widehat{\cS\cD} = \cS\cD \oplus \mathbb{C} C$, 
with bracket
 $$ [t^r F(D), t^s G(D)] = t^{r+s} (F(D+s)G(D) - (-1)^{|F||G|} F(D) G(D+r)) + \Psi(t^r F(D), t^s G(D))C.$$ 
Here $|\cdot |$ denotes the $\mathbb{Z}_2$-gradation. 
The principal $\mathbb{Z}$-gradation on $\widehat{\cS\cD}$ is given by 
\begin{equation} \begin{split} & \text{wt}(C) = 0, \qquad  \text{wt} (t^n f(D) \partial_{\theta} \theta )= \text{wt} (t^n f(D)  \theta\partial_{\theta} ) = n, \\ & \text{wt} (t^{n+1} f(D) \partial_{\theta}) = \text{wt} (t^n f(D)  \theta ) = n+\frac{1}{2}.\end{split}\end{equation}
This defines the triangular decomposition 
$$\widehat{\cS\cD} = \widehat{\cS\cD}_- \oplus \widehat{\cS\cD}_0 \oplus \widehat{\cS\cD}_+, \qquad \widehat{\cS\cD}_{\pm} = \bigoplus_{j\in\pm \mathbb{N}/2} \widehat{\cS\cD}_j.$$ 
Define $J^{a,k}_n = J^{k}_n M_a$ for $a = 0,1,\pm$, and define the parabolic subalgebra $\cS\cP \subset \cS\cD$ to be the Lie algebra spanned by 
$$\{J^{a,k}_n| k+n \geq 0,\ n\in \mathbb{Z},\ k\in \mathbb{Z}_{\geq 0},\ a = 0,1,\pm \}.$$ 
The cocyle \eqref{supercocycle} vanishes on $\cS\cP$, so $\cS\cP$ is a subalgebra of $\widehat{\cS\cD}$, and 
$\widehat{\cS\cD}_0\oplus \widehat{\cS\cD}_+\subset \widehat{\cS\cP}$, where $\widehat{\cS\cP} = \cS\cP \oplus \mathbb{C}C$. 
 
Given $c\in \mathbb{C}$, let $\C_c$ denote the one-dimensional $\widehat{\cS\cP}$-module on which $C$ acts by $c\cdot \text{id}$ and $J^{a,k}_n$ acts by zero. The induced $\widehat{\cS\cD}$-module
$$\cM_c(\widehat{\cS\cD}) = U(\widehat{\cS\cD}) \otimes_{U(\widehat{\cS\cP})} \C_c,$$ 
is known as the {\it vacuum $\widehat{\cS\cD}$-module of central charge $c$}. 
\begin{prop}
For all $c\in \mathbb{C}$,
$$\cM_c(\widehat{\cS\cD}) \cong \cM_{-c}(\widehat{\cS\cD}).$$
\end{prop}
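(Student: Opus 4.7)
The plan is to exhibit an involutive Lie superalgebra automorphism $\hat\sigma$ of $\widehat{\cS\cD}$ that preserves $\widehat{\cS\cP}$ setwise and satisfies $\hat\sigma(C)=-C$. Given such a $\hat\sigma$, the pullback of the character $\chi_c\colon \widehat{\cS\cP}\to\mathbb{C}$ (with $\chi_c(C)=c$ and all $J^{a,k}_n$ sent to $0$) is exactly $\chi_{-c}$, so the universal property of induction lifts $\hat\sigma$ canonically to a vector-space isomorphism $\cM_c(\widehat{\cS\cD})\to\cM_{-c}(\widehat{\cS\cD})$ sending vacuum to vacuum. Because the vertex superalgebra structure on each vacuum module is determined by the $\widehat{\cS\cD}$-action together with the vacuum vector, this linear isomorphism is automatically a vertex superalgebra isomorphism: it identifies the generating fields $J^{a,k}(z)$ of $\cM_c$ with the relabeled fields $J^{\sigma(a),k}(z)$ of $\cM_{-c}$, and the OPEs match because the central terms involving $c$ are traded for central terms involving $-c$ under the sign reversal of the cocycle.

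The construction of $\hat\sigma$ is cleanest at the matrix level. Let $S=\bigl(\begin{smallmatrix}0&1\\1&0\end{smallmatrix}\bigr)\in M(1,1)$, which is an odd involution. Conjugation $\sigma\colon X\mapsto SXS$ is an even automorphism of the associative superalgebra $M(1,1)\cong Cl$ (parity is preserved because $|S|+|S^{-1}|=0$), and one checks directly that $\sigma(M_0)=M_1$ and $\sigma(M_+)=M_-$, so that $\mathrm{Str}\circ\sigma=-\mathrm{Str}$. Extend $\sigma$ to $\cS\cD=\cD\otimes Cl$ by $\mathrm{id}_{\cD}\otimes\sigma$. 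Using formula \eqref{supercocycle} together with the fact that $\sigma$ is an algebra automorphism combined with the sign flip of the supertrace, a short computation gives $\Psi(\sigma(X),\sigma(Y))=-\Psi(X,Y)$ for all $X,Y\in\cS\cD$. Defining $\hat\sigma|_{\cS\cD}=\sigma$ and $\hat\sigma(C)=-C$, this cocycle identity is precisely the compatibility required for $\hat\sigma$ to respect the bracket $[X,Y]=[X,Y]_{\cS\cD}+\Psi(X,Y)C$ of $\widehat{\cS\cD}$.

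Finally, $\hat\sigma$ preserves $\widehat{\cS\cP}$ because $\sigma$ only permutes the matrix labels $a\in\{0,1,+,-\}$ while the defining conditions $k+n\geq 0$ involve only the weight indices $(k,n)$. The main technical hazard is the parity and sign bookkeeping in the second paragraph: one must confirm that conjugation by an \emph{odd} element really produces a well-defined \emph{even} superalgebra automorphism (which it does here because $S^2=I$ is even) and that the resulting reversal of the supertrace genuinely flips the sign of the cocycle in formula \eqref{supercocycle} rather than only for individual monomials. Once these verifications are in place, the passage from $\hat\sigma$ to the desired vertex superalgebra isomorphism $\cM_c(\widehat{\cS\cD})\cong\cM_{-c}(\widehat{\cS\cD})$ is formal.
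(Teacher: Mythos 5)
Your proof is correct and follows essentially the same route as the paper: the conjugation $X\mapsto SXS$ by the odd involution $S=\bigl(\begin{smallmatrix}0&1\\1&0\end{smallmatrix}\bigr)$ is exactly the paper's automorphism $\Pi$, which swaps $f_0\leftrightarrow f_1$ and $f_+\leftrightarrow f_-$, reverses the supertrace and hence the cocycle \eqref{supercocycle}, and extends by $C\mapsto -C$ to an automorphism of $\widehat{\cS\cD}$ identifying the two vacuum modules.
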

\begin{proof}
We will construct an automorphism of $\widehat{\cS\cD}$ that maps $C$ to $-C$ and hence $\cM_c(\widehat{\cS\cD})$
carries also an action of $\widehat{\cS\cD}$ at central charge $-c$, establishing the isomorphism.

Define the map $\Pi$ on $\cS\cD$ by
$$ \Pi(F(D)) =\left( \begin{matrix} f_1(D) & f_-(D) \\ f_+(D) & f_0(D)\end{matrix} \right).$$
This map respects the graded commutator of $2\times 2$ matrices and $\Pi\circ\Pi$ acts as the identity, hence $\Pi$ defines an automorphism on $\cS\cD$. Note that $\Pi$ does not respect the supertrace but changes sign, so the cocycle also satisfies
$$\Psi(t^r \Pi(F(D)), t^s \Pi(G(D)))= -\Psi(t^r F(D), t^s G(D)).$$
Defining $\Pi(C)=-C$ extends $\Pi$ to an automorphism of $\widehat{\cS\cD}$. 
\end{proof}
The module $\cM_c(\widehat{\cS\cD})$ possesses a vertex superalgebra structure, and is freely generated by fields 
$$J^{a,k} (z) = \sum_{n\in\mathbb{Z}} J^{a,k}_n z^{-n-k-1}, \qquad k\geq 0, \qquad a = 0,1,\pm.$$
Here $J^{0,k},J^{1,k}$ are even and have weight $k+1$, and $J^{+,k}, J^{-,k}$ are odd and have weights $k+1/2$, $k+3/2$, respectively. The modes $J^{a,k}_n$ represent $\widehat{\cS\cD}$ on $\cM_c(\widehat{\cS\cD})$, and we rewrite these fields in the form
$$J^{a,k}(z) = \sum_{k\in\mathbb{Z}} J^{a,k}(n) z^{-n-1}, \qquad J^{a,k}(n) = J^{a,k}_{n-k}.$$ 

Define a filtration $$(\cM_c(\widehat{\cS\cD}))_{(0)} \subset (\cM_c(\widehat{\cS\cD}))_{(1)}\subset \cdots$$ on $\cM_c(\widehat{\cS\cD})$ as follows: for $l\geq 0$, $(\cM_c(\widehat{\cS\cD}))_{(2l)}$ is the span of iterated Wick products of the generators $J^{a,k}$ and their derivatives of length at most $l$, and $(\cM_c(\widehat{\cS\cD}))_{(2l+1)} = (\cM_c(\widehat{\cS\cD}))_{(2l)}$. In particular, $J^{a,k}$ and its derivatives have degree $2$. Equipped with this filtration, $\cM_c(\widehat{\cS\cD})$ lies in the category $\mathcal{R}$, and $\text{gr}(\cM_c(\widehat{\cS\cD}))$ is the polynomial superalgebra $\mathbb{C}[\partial^l J^{a,k}|~ l,k\geq 0]$. Each element $J^{a,k}(m) \in \widehat{\cS\cP}$ for $k,m\geq 0$ gives rise to a derivation of degree zero on $\text{gr}(\cM_c(\widehat{\cS\cD}))$, and this action of $\widehat{\cS\cP}$ on $\text{gr}(\cM_c(\widehat{\cS\cD}))$ is independent of $c$.

There are some substructures of $\cM_c(\widehat{\cS\cD})$ that will be important to us. First, the fields $J^{0,0}, J^{1,0}, J^{+,0},J^{-,0}$ satisfy
\begin{equation}\label{gl11struc}
\begin{split} 
& J^{0,0}(z) J^{0,0}(w)  \sim c (z-w)^{-2},\qquad J^{1,0}(z) J^{1,0}(w) \sim -c (z-w)^{-2},\\ & J^{0,0}(z) J^{-,0}(w) \sim J^{-,0}(w)(z-w)^{-1},\qquad J^{0,0}(z) J^{+,0}(w) \sim -J^{+,0}(w)(z-w)^{-1}, \\ & J^{1,0}(z) J^{-,0}(w) \sim - J^{-,0}(w)(z-w)^{-1},\qquad J^{1,0}(z) J^{+,0}(w) \sim J^{+,0}(w)(z-w)^{-1},\\ & J^{+,0}(z) J^{-,0}(w) \sim c (z-w)^{-2} - (J^{0,0}(w)+J^{1,0}(w))(z-w)^{-1},\end{split}\end{equation} so they generate a copy of the affine vertex superalgebra associated to $\gg\gl(1|1)$ at level $c$. Next, recall that the $N=2$ superconformal vertex algebra $\cA_c$ of central charge $c$ is generated by fields $F, L, G^{\pm}$, where $L$ is  a Virasoro element of central charge $c$, $F$ is an even primary of weight one, and $G^{\pm}$ are odd primaries of weight $\frac{3}{2}$. These fields satisfy
\begin{equation}\label{n=2svir}
\begin{split}
&F(z) F(w) \sim \frac{c}{3} (z-w)^{-2},\qquad G^{\pm}(z) G^{\pm}(w)\sim 0,\\ 
& F(z) G^{\pm} (w) \sim \pm G^{\pm} (w) (z-w)^{-1},\\ 
& G^+(z) G^-(w) \sim \frac{c}{3} (z-w)^{-3} + F(w) (z-w)^{-2} + (L(w) + \frac{1}{2} \partial F(w) )(z-w)^{-1}.
\end{split}\end{equation}

We have a vertex algebra homomorphism $\cA_{c} \ra \cM_c(\widehat{\cS\cD})$ given by 
\begin{equation} \begin{split} & F \mapsto \frac{2}{3}J^{0,0} - \frac{1}{3} J^{1,0}, \qquad L \mapsto J^{0,1} + J^{1,1} - \frac{2}{3} \partial J^{0,0} - \frac{1}{6} \partial J^{1,1}, \\ & G^+ \mapsto J^{-,0}, \qquad G^- \mapsto -J^{+,1} + \frac{1}{3} \partial J^{+,0}.\end{split} \end{equation}

Finally, $\{J^{0,k}|\ k\geq 0\}$ and $\{J^{1,k}|\ k\geq 0\}$ generate copies of $\cM_c$ and $\cM_{-c}$ inside $\cM_c(\widehat{\cS\cD})$, respectively. 

\begin{lemma} $\cM_c$ and $\cM_{-c}$ form a Howe pair, i.e., a pair of mutual commutants, inside $\cM_c(\widehat{\cS\cD})$.
\end{lemma}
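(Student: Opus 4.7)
The plan is to establish both $\cM_{\pm c} \subseteq \mathrm{Com}(\cM_{\mp c}, \cM_c(\widehat{\cS\cD}))$ and their converses. The easy direction is a direct calculation in $\widehat{\cS\cD}$. From $M_0 M_1 = M_1 M_0 = 0$ and $\mathrm{Str}(M_0 M_1) = 0$, substituting into the bracket formula and the cocycle \eqref{supercocycle} gives $[t^r f(D) M_0, t^s g(D) M_1] = 0$ for all $f, g \in \C[x]$ and $r, s \in \Z$, whence $J^{0,k}(z)\, J^{1,l}(w) \sim 0$. This locality extends via Dong's lemma to all Wick products, so the vertex subalgebras $\cM_c = \langle J^{0,k}\rangle$ and $\cM_{-c} = \langle J^{1,l}\rangle$ commute inside $\cM_c(\widehat{\cS\cD})$.

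For the converse $\mathrm{Com}(\cM_c) \subseteq \cM_{-c}$ (the other direction follows by applying $\Pi$ to the analogous statement at central charge $-c$), I use the good filtration and induct on the filtration degree $2r$. For $a \in \mathrm{Com}(\cM_c) \cap (\cM_c(\widehat{\cS\cD}))_{(2r)}$, the condition $J^{0,k} \circ_n a = 0$ for all $k, n \geq 0$ descends to the associated graded $\C[\partial^i J^{a,l}]$, forcing the leading symbol $\phi_{2r}(a)$ to lie in the joint kernel of the derivations induced by $\{J^{0,k}(n) : k, n \geq 0\}$. Using $M_0 M_+ - M_+ M_0 = M_+$ and $M_0 M_- - M_- M_0 = -M_-$, these derivations are trivial on the subring $\C[\partial^i J^{1,l}]$ and preserve the $\partial^i J^{+,l}$-span and the $\partial^i J^{-,l}$-span separately, acting by nontrivial linear operators on each. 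I then argue that their joint kernel on polynomials of positive bidegree in the odd variables is trivial: the $\gg\gl(1|1)$-charge grading from \eqref{gl11struc} forces zero joint charge, and the action of the higher modes $J^{0,k}(n)$ (substantive differential operators, not mere scalars) rules out the remaining candidates.

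This yields $\phi_{2r}(a) \in \C[\partial^i J^{0,l}, \partial^i J^{1,l}]$. To eliminate the $\partial^i J^{0,l}$ generators, one uses that although they lie in the graded joint kernel (since $J^{0,k} \circ_n J^{0,l}$ is a central scalar that drops to zero in $\mathrm{gr}_{2r}$), the scalar is proportional to $c$. For $c \neq 0$, the $c$-linear part of $J^{0,k} \circ_n a$ read off modulo strictly lower filtration provides an obstruction that cannot be cancelled, forcing every $\partial^i J^{0,l}$ to appear trivially in $\phi_{2r}(a)$. Hence $\phi_{2r}(a) \in \mathrm{gr}(\cM_{-c})_{2r}$, and I pick $b \in (\cM_{-c})_{(2r)}$ with $\phi_{2r}(b) = \phi_{2r}(a)$; then $a - b \in \mathrm{Com}(\cM_c)$ has strictly smaller filtration degree, closing the induction.

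The main obstacle is the invariant-theoretic step showing that the joint kernel of the graded derivations on polynomials in the $\partial^i J^{\pm,l}$ is trivial: the zero-mode charge argument alone is insufficient, and one must exploit the nontrivial action of the higher modes. Equally delicate is the elimination of the $\partial^i J^{0,l}$, which is where the nondegeneracy $c \neq 0$ becomes essential.
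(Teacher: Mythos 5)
Your overall strategy --- pass to leading terms of the (freely generated) PBW expansion, use the $J^{0,0}\circ_0$ charge to force the odd generators to occur in $+/-$ pairs, then use higher modes to exclude them entirely --- is in essence the paper's. The paper writes $\omega\in\text{Com}(\cM_c,\cM_c(\widehat{\cS\cD}))$ uniquely as a sum of normally ordered monomials, deduces $t=u$ (equal numbers of $J^{+,*}$ and $J^{-,*}$ factors) from commutation with $J^{0,0}$, and then rules out $u>0$ by a concrete maximality argument: if $l$ is the largest index with $J^{-,l}$ occurring, then $J^{0,2}\circ_1 J^{-,l}=lJ^{-,l+1}$, and the resulting terms containing $J^{-,l+1}$ cannot cancel, so $J^{0,2}\circ_1\omega\neq 0$. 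This is exactly the step you flag as ``the main obstacle'' and then merely assert (``the action of the higher modes \dots rules out the remaining candidates''); as written, your proof omits the one computation that does the real work here.

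More seriously, your final step rests on a false premise. You claim the $\partial^iJ^{0,l}$ lie in the graded joint kernel ``since $J^{0,k}\circ_nJ^{0,l}$ is a central scalar that drops to zero in $\text{gr}$.'' It is not: the fields $J^{0,k}$ generate a copy of the $\cW_{1+\infty}$ vacuum module $\cM_c$, whose OPEs have singular parts that are \emph{linear} in the generators (e.g. $J^{0,1}$ is essentially a Virasoro field, with $J^{0,1}\circ_1J^{0,1}$ a nonzero multiple of $J^{0,1}$), and these linear parts survive as nontrivial degree-zero derivations of $\text{gr}(\cM_c(\widehat{\cS\cD}))$. The ``$c$-linear obstruction'' you then extract from the central term is both unnecessary and incorrect: it would fail at $c=0$, where the lemma still holds and the paper needs no such hypothesis, and it yields nothing when the symbol is a single factor $\partial^iJ^{0,l}$, where no contraction producing a central term is available. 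Once one knows $\omega\in\cM_c\otimes\cM_{-c}$, the correct (and much shorter) conclusion is the paper's: the center of $\cM_c$ is trivial for every $c$ (for instance because $\cM_c$ contains a Virasoro field, whose weight grading forces any central element into weight zero), hence $\omega\in\cM_{-c}$. Your easy direction --- mutual commutation of the two copies via $M_0M_1=M_1M_0=0$ and the vanishing of the supertrace term --- is fine and is left implicit in the paper.
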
 

\begin{proof} We show that $\text{Com} (\cM_{c}, \cM_c(\widehat{\cS\cD})) = \cM_{-c}$; the proof that $\text{Com} (\cM_{-c}, \cM_c(\widehat{\cS\cD})) = \cM_{c}$ is the same. Let $\omega \in \text{Com} (\cM_{c}, \cM_c(\widehat{\cS\cD}))$, and write $\omega$ as a sum of monomials 
\begin{equation}\label{itermon} \partial^{a_1} J^{0,i_1} \cdots \partial^{a_r} J^{0,i_r} \partial^{b_1} J^{1,j_1} \cdots \partial^{b_s} J^{1,j_s }\partial^{c_1} J^{+,k_1} \cdots \partial^{c_t} J^{+,k_t } \partial^{d_1} J^{-,l_1} \cdots \partial^{d_u} J^{-,l_u }.\end{equation}
Since $\omega$ commutes with $J^{0,0}$, we have $t=u$ for each such term. Suppose that $u>0$ for some such monomial, and let $l$ be the maximal integer such that $J^{-,l}$ appears in any such monomial. Since $J^{0,2} \circ_1 J^{-,l} = l J^{-,l+1}$, we would have $J^{0,2} \circ_1 \omega \neq 0$, so we conclude that $u=0$. Therefore $\omega \in \cM_c \otimes \cM_{-c}$, and since the center of $\cM_c$ is trivial, we conclude that $\omega \in \cM_{-c}$. \end{proof}

\begin{lemma} \label{weakfg} 
For each $c\in\mathbb{C}$, the sets $$S = \{J^{0,0},J^{1,0}, J^{+,0}, J^{-,0}, J^{0,1}\}, \qquad T = \{J^{0,0}, J^{1,0}, J^{+,0}, J^{-,0}, J^{+,1}, J^{-,1}\}$$ both generate $\cM(\widehat{\cS\cD})_c$ as a vertex algebra. \end{lemma}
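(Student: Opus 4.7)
The strategy is to reduce vertex-algebra generation to Lie-algebra generation of Fourier modes in $\widehat{\cS\cD}$. Since $\cM_c(\widehat{\cS\cD}) = U(\widehat{\cS\cD}_-) \cdot 1$ as a vector space, and since the Fourier modes of any vertex subalgebra form a Lie subalgebra of $\widehat{\cS\cD}$ (acting through the module structure), it suffices to show that the Lie subalgebra generated by $\{J^{a,k}_n : J^{a,k} \in S,\ n \in \mathbb{Z}\}$ (respectively $T$) is all of $\widehat{\cS\cD}$. The central element $C$ is captured automatically, e.g.\ from $[J^{0,0}_n, J^{0,0}_{-n}] = nC$ via the cocycle \eqref{supercocycle}. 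We then induct on $k$; the base case $k = 0$ is immediate since $\{J^{a,0}\}_{a \in \{0,1,\pm\}} \subset S \cap T$.

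For $S$, assume all $J^{a,k'}_n$ with $k' < k$ lie in the subalgebra. Writing $[D]_l := D(D-1)\cdots(D-l+1)$ and using $D t^n = t^n(D+n)$ in $\cD$ together with the identity $(D+m)[D]_{l} = [D]_{l+1} + l[D]_l$, one computes $[J^{0,1}_n, J^{+,k-1}_m] = -J^{+,k}_{m+n} - (k-1+m) J^{+,k-1}_{m+n}$ and analogously $[J^{0,1}_n, J^{-,k-1}_m] = J^{-,k}_{m+n} + (k-1)(1+n) J^{-,k-1}_{m+n}$, both modulo cocycle, which together produce $J^{\pm,k}$. The anticommutator $[J^{+,1}_n, J^{-,1}_m]$ then expands, using $M_+M_- + M_-M_+ = I$, as $-(m+1)J^{0,1}_{m+n} - (n+1)J^{1,1}_{m+n} - J^{0,2}_{m+n} - J^{1,2}_{m+n}$ plus cocycle; since $J^{0,1} \in S$ and $J^{1,1}$ is recovered from $[J^{+,0}_n, J^{-,1}_m] = -(J^{0,1}_{m+n} + J^{1,1}_{m+n}) - n J^{1,0}_{m+n}$, we extract the sum $J^{0,2}_{m+n} + J^{1,2}_{m+n}$. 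The crucial decoupling uses $M_0 M_1 = M_1 M_0 = 0$: the bracket of $J^{0,2} + J^{1,2}$ with $J^{0,k-1}_m$ collapses to $[J^{0,2}_n, J^{0,k-1}_m]$, whose leading contribution $-(2m - (k-1)n) J^{0,k}_{m+n}$ yields $J^{0,k}$ for any choice of $n, m$ making this coefficient nonzero, and the analogous bracket with $J^{1,k-1}_m$ yields $J^{1,k}$, closing the induction.

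For $T$, the same induction applies once $J^{0,1}$ and $J^{1,1}$ are produced. The bracket $[J^{+,0}_n, J^{-,1}_m]$ furnishes their sum modulo known $J^{1,0}_{m+n}$, while the $J^{0,1}, J^{1,1}$ coefficients $-(m+1), -(n+1)$ in $[J^{+,1}_n, J^{-,1}_m]$ vary linearly in $n$ at fixed $m+n$, with the $J^{0,2}, J^{1,2}$ and cocycle contributions being $n$-independent; the difference of two such brackets with $n \ne n'$ isolates $J^{0,1}_{m+n} - J^{1,1}_{m+n}$, and combined with the sum gives both individually. The main obstacle throughout is that every bracket $[J^{+,*}, J^{-,*}]$ produces only the symmetric combination $J^{0,k} + J^{1,k}$ at leading order in $D$ (forced by $M_+M_- + M_-M_+ = I$); separating the two supertrace components requires either exploiting subleading mode variation (as in the $k=1$ step for $T$) or the decoupling trick $[J^{0,2}+J^{1,2}, J^{0,k-1}] = [J^{0,2}, J^{0,k-1}]$ via $M_0M_1 = 0$ (for higher $k$).
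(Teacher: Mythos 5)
Your argument is correct, but it follows a genuinely different route from the paper's. The paper stays inside the vertex algebra and exhibits explicit circle-product identities among the fields themselves: $J^{+,0}\circ_0 J^{0,1}=J^{+,1}$ and $J^{-,0}\circ_0 J^{0,1}=-J^{-,1}$ to produce $J^{\pm,1}$, the combinations $J^{-,2}\circ_2 J^{+,2}-(J^{-,1}\circ_1 J^{+,2})$ and $J^{-,2}\circ_2 J^{+,2}+2(J^{-,1}\circ_1 J^{+,2})$ to separate $J^{1,2}$ from $J^{0,2}$, then $\alpha\circ_1 J^{0,k-1}=(k+1)J^{0,k}$ with $\alpha=J^{0,2}-2\partial J^{0,1}$ to climb in $k$, finishing with the $\gg\gl(1|1)$ action to recover $J^{\pm,k}$ and $J^{1,k}$. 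You instead reduce to showing that the Lie subalgebra of $\widehat{\cS\cD}$ generated by the relevant modes is everything, and compute brackets in the matrix picture $\cS\cD=\cD\otimes M(1,1)$. The reduction is legitimate: since the $J^{a,k}$ close linearly under OPE, every iterated bracket of their modes is a linear combination of modes of elements of $\bra S\ket$, and $\cM_c(\widehat{\cS\cD})$ is generated from the vacuum by the negative modes. Your structural observations ($M_+M_0=0$ makes the brackets with $J^{0,1}$ one-sided and hence non-cancelling; $M_+M_-+M_-M_+=I$ forces the symmetric combination $J^{0,k}+J^{1,k}$; $M_0M_1=0$ decouples the two $\cW_{1+\infty}$ copies) are exactly the mechanisms hidden inside the paper's identities, so the two proofs extract the same information; yours makes the real obstruction (separating the two supertrace components) conceptually visible, while the paper's is more self-contained in the vertex-algebra formalism and requires no reduction lemma.

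Three small points to tighten. (i) The phrase ``the Fourier modes of any vertex subalgebra form a Lie subalgebra of $\widehat{\cS\cD}$'' is loose: modes of composite fields are not elements of $\widehat{\cS\cD}$. What you need, and what is true, is that the Lie algebra generated by the modes of $S$ lies in the span of modes of elements of $\bra S\ket$. (ii) The identity should read $(D+m)[D]_l=[D]_{l+1}+(l+m)[D]_l$ (your displayed bracket carries the correct coefficient $k-1+m$, so this is only a typo), and for $k\geq 3$ the bracket $[J^{0,1}_n,J^{-,k-1}_m]$ contains further terms $J^{-,j}_{m+n}$ with $j<k-1$; these are absorbed by the induction hypothesis, but your closed formulas are exact only for $k\leq 2$. (iii) The paper's printed bracket has $F(D)G(D+r)$ in the second term; for matrix-valued symbols the honest supercommutator requires $G(D+r)F(D)$, and your computations implicitly (and correctly) use the latter order --- with the printed order one would get $[J^{0,0}_0,J^{+,0}_n]=0$, contradicting \eqref{gl11struc}.
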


\begin{proof} Let $\bra S \ket$ and $\bra T\ket$ denote the vertex subalgebras of $\cM(\widehat{\cS\cD})_c$ generated by $S$ and $T$, respectively. Note that $J^{+,0} \circ_0 J^{0,1} = J^{+,1}$ and $J^{-,0}\circ_0 J^{0,1} = -J^{-,1}$, so $J^{+,1}$ and $J^{-,1}$ both lie in $\bra S \ket$. Next, $J^{+,0} \circ_0 J^{-,1} = - J^{0,1} - J^{1,1}$, which shows that $J^{1,1} \in \bra S \ket$. So far, $J^{a,k} \in \bra S \ket$ for $a=0,1,\pm$ and $k=0,1$.
Next, we have 
$$J^{0,1} \circ_0 J^{-,1} = J^{-,2}, \qquad J^{1,1} \circ_0 J^{+,1} = J^{+,2},$$
$$J^{-,2} \circ_2 J^{+,2} - (J^{-,1} \circ_1 J^{+,2}) = -3 J^{1,2}, \qquad J^{-,2} \circ_2 J^{+,2} + 2 (J^{-,1} \circ_1 J^{+,2}) = -6 J^{0,2}.$$ This shows that $J^{a,k}\in \bra S \ket$ for $a=0,1,\pm$ and $k\leq 2$.

For $k\geq 1$, we have $$J^{0,2} \circ_1 J^{0,k-1} = (k+1) J^{0,k} -2 \partial J^{0,k-1}, \qquad J^{0,1}\circ_0 J^{0,k} = \partial J^{0,k}.$$ It follows that $\alpha\circ_1 J^{0,k-1} = (k+1) J^{0,k}$, 
where $\alpha = J^{0,2} -2 \partial J^{0,1}$. Since $\alpha\in \bra S \ket$, it follows by induction that $J^{0,k} \in \bra S \ket$ for all 
$k$. Next, we have $$J^{+,0} \circ_0 J^{0,k} = J^{+,k}, \qquad J^{-,0}\circ_0 J^{0,k} = -J^{-,k},$$ so $J^{+,k}$ and $J^{-,k}$ lie in 
$\bra S \ket$ for all $k$. Finally, $J^{+,0} \circ_0 J^{-,k} = - J^{0,k} - J^{1,k}$, which shows that $J^{1,k}$ lies in $\bra S \ket$ for 
all $k$. This shows that $\cM(\widehat{\cS\cD})_c = \bra S \ket$.

To prove that $\cM(\widehat{\cS\cD})_c = \bra T \ket$, it is enough to show that $J^{0,1} \in \bra T \ket$. First, we have 
$$\big(J^{-,1} \circ_0 J^{+,1}\big) \circ_1 J^{+,1} = -4 J^{+,2} + 2\partial J^{+,1},$$ 
which implies that $J^{+,2} \in \bra T \ket$. Finally, we have $$J^{-,0} \circ_1 J^{+,2} = -2 J^{0,1},$$ which shows that 
$J^{0,1} \in \bra T \ket$. \end{proof}

Lemma \ref{weakfg} shows that $\cM_c(\widehat{\cS\cD})$ is a finitely generated vertex algebra. However, $\cM_c(\widehat{\cS\cD})$ is not {\it strongly} generated by any finite set of vertex operators. This follows from the fact that $\text{gr}(\cM_c(\widehat{\cS\cD}))$ is the polynomial superalgebra with generators $\partial^l J^{a,k}$ for $k,l \geq 0$ and $a = 0,1,\pm$, which implies that there are no nontrivial normally ordered polynomial relations in $\cM_c(\widehat{\cS\cD})$. A weight-homogeneous element $\omega\in \cM_c(\widehat{\cS\cD})$ is called a {\it singular vector} if $\omega$ is annihilated by the operators $$J^{0,k}\circ_m, \qquad J^{1,k} \circ_m, \qquad J^{-,k} \circ_m, \qquad J^{+,k} \circ_r, \qquad  m>k, \qquad r\geq k.$$ The maximal proper $\widehat{\cS\cD}$-submodule $\cS\cI_c$ is the ideal generated by all singular vectors $\omega\neq 1$, and the super $\cW_{1+\infty}$ algebra $\cV_c(\widehat{\cS\cD})$ is the unique irreducible quotient $\cM_c(\widehat{\cS\cD})/\cS\cI_c$. We denote the projection $\cM_c(\widehat{\cS\cD})\ra \cV_c(\widehat{\cS\cD})$ by $\pi_{c}$, and we write 
\begin{equation}\label{lowercasej} j^{a,k} = \pi_c (J^{a,k}), \qquad k\geq 0.\end{equation} 
Clearly $\cV_c(\widehat{\cS\cD})$ is generated as a vertex algebra by the corresponding sets 
$$\{j^{0,0},j^{1,0}, j^{+,0}, j^{-,0}, j^{0,1}\}, \qquad \{j^{0,0},j^{1,0}, j^{+,0}, j^{-,0}, j^{+,1}, j^{-,1}\},$$ 
but there may now be normally ordered polynomial relations among $\{j^{a,k}| k\geq 0 \}$ and their derivatives. The actions of $V_c(\gg\gl (1|1))$ and the $N=2$ superconformal algebra $\cA_c$ on $\cM_c(\widehat{\cS\cD})$ descend to actions on $\cV_c(\widehat{\cS\cD})$ given by the same formulas, where $J^{a,k}$ is replaced by $j^{a,k}$. Likewise, $\{j^{0,k}|\ k\geq 0\}$ and $\{j^{1,k}|\ k\geq 0\}$ generate mutually commuting copies of $\cW_{1+\infty,c}$ and $\cW_{1+\infty,-c}$, respectively, inside $\cV_c(\widehat{\cS\cD})$.

\section{The case of positive integral central charge}
For $n\in \mathbb{Z}$, $\cM_n(\widehat{\cS\cD})$ is reducible and $\cV_n(\widehat{\cS\cD})$ has a nontrivial structure. For $n=0$, $J^{+,0}$ is a singular vector so $V_0(\widehat{\cS\cD}) \cong \mathbb{C}$, and since $\cV_n(\widehat{\cS\cD}) \cong \cV_{-n}(\widehat{\cS\cD})$ it suffices to consider the case $n\geq 1$. The starting point of our study is a free field realization of $\cV_{n}(\widehat{\cS\cD})$ as the $GL_n$-invariant subalgebra of the $bc\beta\gamma$-system $\cF$ of rank $n$ \cite{AFMO}. This indicates that the structure of $\cV_{n}(\widehat{\cS\cD})$ is deeply connected to classical invariant theory. 

The $\beta\gamma$-system $\cS$ was introduced in \cite{FMS}, and is the unique vertex algebra with even generators $\beta^{i}$, $\gamma^{i}$ for $i=1,\dots, n$, which satisfy the OPE relations \begin{equation}\label{betagamma} \begin{split} & \beta^i(z)\gamma^{j}(w)\sim \delta_{i,j} (z-w)^{-1},\qquad \gamma^{i}(z)\beta^j(w)\sim -\delta_{i,j} (z-w)^{-1}, \\ & \beta^i(z)\beta^j(w)\sim 0,\qquad \gamma^i(z)\gamma^j (w)\sim 0.\end{split} \end{equation}
There is a one-parameter family of conformal structures 
\begin{equation}\label{oneparameter} 
L^{\cS}_{\lambda} = \lambda \sum_{i=1}^n :\beta^i\partial\gamma^i: + (\lambda-1) \sum_{i=1}^n :\partial\beta^i\gamma^i:
\end{equation} 
of central charge $n(12\lambda^2 - 12\lambda + 2)$, under which $\beta^i$ and $\gamma^i$ are primary of conformal weights $\lambda$ and $1-\lambda$, respectively. Similarly, the $bc$-system $\cE$ is the unique vertex superalgebra with odd generators $b^{i}$, $c^{i}$ for $i=1,\dots, n$, which satisfy the OPE relations 
\begin{equation}\label{bcope} \begin{split} & b^i (z) c^{j}(w)\sim\delta_{i,j} (z-w)^{-1}, \qquad c^{i}(z) b^j(w)\sim \delta_{i,j} (z-w)^{-1},\\ & b^i (z)b^j(w)\sim 0, \qquad c^{i}(z)c^{j}(w)\sim 0.\end{split}\end{equation} 
There is a similar family of conformal structures 
\begin{equation}\label{bconeparameter} 
L^{\cE}_{\lambda} = (1-\lambda) \sum_{i=1}^n :\partial b^i c^i: - \lambda \sum_{i=1}^n  :b^i \partial c^i :
\end{equation} 
of central charge $n(-12\lambda^2 + 12 \lambda -2)$, under which $b^{i}$ and $c^{i}$ are primary of conformal weights $\lambda$ and $1-\lambda$, respectively. The $bc\beta\gamma$-system $\cF$ is just $\cE \otimes \cS$, and we will assign $\cF$ the conformal structure $$L^{\cF} = L^{\cS}_{5/6} + L^{\cE}_{1/3},$$ under which $\beta^i$, $\gamma^{i}$, $b^i$, $c^{i}$ have weights $5/6$, $1/6$, $1/3$, $2/3$, respectively.

$\cF$ admits a good increasing filtration
\begin{equation} \label{filtw} 
\cF_{(0)}\subset \cF_{(1)}\subset \cdots, \qquad \cF = \bigcup_{k\geq 0} \cF_{(k)},
\end{equation} 
where $\cF_{(k)}$ is spanned by iterated Wick products of the generators $b^i, c^{i}, \beta^i, \gamma^{i}$ and their derivatives, of length at most $k$. This filtration is $GL_n$-invariant, and we have an isomorphism 
of supercommutative rings 
\begin{equation}\label{assgrad} 
\text{gr}(\cF) \cong \text{Sym}(\bigoplus_{k\geq 0} (V_k \oplus V^*_k)) \bigotimes \bigwedge (\bigoplus_{k\geq 0} (U_k \oplus U^*_k)).
\end{equation} 
Here $V_k$, $U_k$ are copies of $\mathbb{C}^n$, and $V^*_k$, $U^*_k$ are copies of $(\mathbb{C}^n)^*$, as $GL_n$-modules. The generators of 
$\text{gr}(\cF)$ are $\beta^{i}_k$, $\gamma^{i}_k$, $b^{i}_k$, and $c^{i}_k$, which correspond to the vertex operators 
$\partial^k \beta^{i}$, $\partial^k \gamma^{i}$, $\partial^k b^{i}$, and $\partial^k c^{i}$, respectively for $k\geq 0$.

\begin{thm} (Awata-Fukuma-Matsuo-Odake) \label{TAFMO} There is an isomorphism $\cV_{n}(\widehat{\cS\cD})\rightarrow \cF^{GL_n}$ given by
\begin{equation} \label{bgrealizationi} 
\begin{split} 
j^{0,k} \mapsto  -\sum_{i=1}^n :b^i \partial^k c^i:,\qquad j^{1,k} \mapsto \sum_{i=1}^n :\beta^i \partial^k \gamma^i:, \\ j^{+,k} \mapsto - \sum_{i=1}^n :b^i \partial^k \gamma ^i:,\qquad j^{-,k} \mapsto  \sum_{i=1}^n :\beta^i \partial^k c^i:.\end{split}\end{equation} 
\end{thm}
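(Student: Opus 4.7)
The plan is to construct the vertex algebra map $\phi:\cM_n(\widehat{\cS\cD}) \to \cF$ sending the free generators $J^{a,k}$ to the right-hand sides of \eqref{bgrealizationi}, show that its image is precisely $\cF^{GL_n}$, and then conclude by the irreducibility of $\cV_n(\widehat{\cS\cD})$. Since $\cM_n(\widehat{\cS\cD})$ is freely generated by $\{J^{a,k}\}$, to see that $\phi$ is well-defined it suffices to verify that the proposed images satisfy the OPE relations of $\widehat{\cS\cD}$ at central charge $c=n$. This is a direct Wick-theorem calculation from \eqref{betagamma} and \eqref{bcope}: single contractions reproduce the Lie superalgebra brackets, and each double contraction contributes a factor of $\sum_{i=1}^n 1 = n$ from summing over copies, so the central terms emerge with coefficient $\pm n$, matching the normalization of the cocycle \eqref{supercocycle}. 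A weight count is consistent: with $\beta^i, \gamma^i, b^i, c^i$ of conformal weights $5/6, 1/6, 1/3, 2/3$, the four families of images have the weights $k+1, k+1, k+1/2, k+3/2$ required of $J^{0,k}, J^{1,k}, J^{+,k}, J^{-,k}$.

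Next I would show that $\phi$ surjects onto $\cF^{GL_n}$. Each image field is manifestly $GL_n$-invariant, being a trace over the fundamental indices. For the other direction, I pass to the associated graded via \eqref{assgrad}; since $GL_n$ is reductive, $\text{gr}(\cF^{GL_n}) = \text{gr}(\cF)^{GL_n}$. Weyl's first fundamental theorem of invariant theory, applied in the super setting to $GL_n$ acting on a direct sum of bosonic and fermionic copies of $\mathbb{C}^n$ and $(\mathbb{C}^n)^*$, implies that $\text{gr}(\cF)^{GL_n}$ is generated as a supercommutative ring by the four families of bilinear pairings $\sum_i \beta^i_j \gamma^i_k$, $\sum_i b^i_j c^i_k$, $\sum_i b^i_j \gamma^i_k$, and $\sum_i \beta^i_j c^i_k$ for $j,k\geq 0$. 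An easy induction using the Leibniz rule reduces this further to the $\partial$-ring generated by the $j=0$ members, which are precisely the symbols of the $\phi(J^{a,k})$. Lemma~\ref{reconlem} then yields that $\cF^{GL_n}$ is strongly generated as a vertex algebra by $\{\phi(J^{a,k})\}$, so $\phi$ surjects onto $\cF^{GL_n}$.

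Finally, since $\phi$ is nonzero, $\ker\phi$ is a proper vertex algebra ideal of $\cM_n(\widehat{\cS\cD})$ and is therefore contained in the maximal proper ideal $\cS\cI_n$. Thus $\phi$ factors through a surjection $\bar\phi: \cV_n(\widehat{\cS\cD}) \to \cF^{GL_n}$. Because $\cV_n(\widehat{\cS\cD})$ is irreducible as a $\widehat{\cS\cD}$-module, $\bar\phi$ must be injective and hence is the desired isomorphism. The main obstacle is the application of Weyl's FFT in this mixed bosonic-fermionic setting: one must verify that no $GL_n$-invariants arise beyond the four listed bilinear families. This reduces to classical invariant theory for $GL_n$ on its standard representation and dual, which is well known, but it is the conceptual heart of the argument. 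The OPE verification in the first step is tedious but entirely mechanical.
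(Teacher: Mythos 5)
The paper does not actually prove this theorem; it quotes it from \cite{AFMO}, so there is no internal proof to compare against. Your first two steps are nevertheless sound and mirror machinery the paper develops elsewhere: well-definedness of $\phi$ on the freely generated vacuum module reduces to an OPE check, and surjectivity onto $\cF^{GL_n}$ follows from Theorem \ref{weylfft}, the reduction \eqref{lincomb} of the $\omega^a_{k,l}$ to the $j^{a,l}$ and their derivatives, and Lemma \ref{reconlem}.

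The final step, however, has a genuine logical gap: the containment you establish points the wrong way for the factorization you claim. From ``$\ker\phi$ is a proper graded ideal, hence $\ker\phi\subseteq\cS\cI_n$'' you cannot conclude that $\phi$ descends to $\cV_n(\widehat{\cS\cD})=\cM_n(\widehat{\cS\cD})/\cS\cI_n$; descending to that quotient requires the opposite inclusion $\cS\cI_n\subseteq\ker\phi$. As written, your argument only produces a surjection $\cF^{GL_n}\cong\cM_n(\widehat{\cS\cD})/\ker\phi\twoheadrightarrow\cV_n(\widehat{\cS\cD})$, which by itself does not rule out $\ker\phi\subsetneq\cS\cI_n$. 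The missing ingredient is the simplicity of $\cF^{GL_n}$ (equivalently, its irreducibility as a graded $\widehat{\cS\cD}$-module): granting that, $\ker\phi$ is a maximal proper graded submodule, and since it is contained in the proper submodule $\cS\cI_n$ it must equal $\cS\cI_n$, which is exactly the theorem. Simplicity of the invariant subalgebra of a simple vertex superalgebra under a reductive group action is true but is itself a nontrivial theorem (resting on the complete reducibility of $\cF$ as a $GL_n\times\cF^{GL_n}$-module); it must be invoked or proved, and it---rather than the Weyl FFT computation you flag as the conceptual heart---is where your argument currently fails to close. Alternatively one could show directly that every singular vector of $\cM_n(\widehat{\cS\cD})$ lies in $\ker\phi$, but you have not done that either.
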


The Virasoro element $L$ of $\cV_{n}(\widehat{\cS\cD})$ is given by \begin{equation} \label{virasoroleveln} L = j^{0,1} + j^{1,1} - \frac{2}{3} \partial j^{0,0} - \frac{1}{6} \partial j^{1,0}.\end{equation} Clearly $L$ maps to $L^{\cF}$, and the above map is a morphism in the category $\cR$. Note that the copies of $\cW_{1+\infty,n}$ and $\cW_{1+\infty,-n}$ generated by $\{j^{0,k}|\ k\geq 0\}$ and $\{j^{1,k}|\ k\geq 0\}$, respectively, form a Howe pair inside $\cV_{n}(\widehat{\cS\cD})$. This is clear from Theorem \ref{TAFMO}. Any $\omega \in \text{Com}(\cW_{1+\infty,n}, \cV_{n}(\widehat{\cS\cD}))$ must commute with $j^{0,1}$, so it cannot depend on $b^i, c^i$ and their derivatives. Similarly, any $\omega \in \text{Com}(\cW_{1+\infty,-n}, \cV_{n}(\widehat{\cS\cD}))$ must commute with $j^{1,1}$, so it cannot depend on $\beta^i, \gamma^i$ and their derivatives.

The identification $\cV_{n}(\widehat{\cS\cD}) \cong \cF^{GL_n}$ suggests an alternative strong generating set for $\cV_{n}(\widehat{\cS\cD})$ coming from classical invariant theory. Since $GL_n$ preserves the filtration on $\cF$, we have \begin{equation} \label{grisos} \text{gr}(\cV_{n}(\widehat{\cS\cD})) \cong \text{gr}(\cF^{GL_n}) \cong \text{gr}(\cF)^{GL_n}.\end{equation} The generators and relations for $\text{gr}(\cF)^{GL_n}$ are given by Weyl's first and second fundamental theorems of invariant theory for the standard representation of $GL_n$ \cite{We}. This theorem was originally stated for the $GL_n$-invariants in the symmetric algebra, but the following is an easy generalization to the case of odd as well as even variables.

\begin{thm}(Weyl) \label{weylfft} For $k\geq 0$, let $V_k$ and $U_k$ be the copies of the standard $GL_n$-module $\mathbb{C}^n$ with basis $x_{i,k}$ and $y_{i,k}$, for $i=1,\dots,n$, respectively. Let $V^*_k$ and $U^*_k$ be the copies of $(\mathbb{C}^n)^*$ with basis $x'_{i,k}$ and $y'_{i,k}$, respectively. The invariant ring $$R=\bigg(\big(\text{Sym} \bigoplus_{k\geq 0} (V_k\oplus V^*_k )\big) \bigotimes  \big( \bigwedge  \bigoplus_{k\geq 0} (U_k\oplus U^*_k)\big) \bigg) ^{GL_n}$$ is generated by the quadratics 

\begin{equation}\label{weylgenerators} \begin{split} q^0_{k,l} = \sum_{i=1}^n y_{i,k} y'_{i,l},\qquad q^1_{k,l} = \sum_{i=1}^n x_{i,k} x'_{i,l},\\ q^+_{k,l} = \sum_{i=1}^n y_{i,k} x'_{i,l},\qquad q^-_{k,l} = \sum_{i=1}^n x_{i,k} y'_{i,l}.\end{split}\end{equation} 
Let $Q^0_{k,l}, Q^{1}_{k,l}$ be even indeterminates and let $Q^{+}_{k,l}, Q^{-}_{k,l}$ be odd indeterminates for $k,l\geq 0$. The kernel $I_n$ of the homomorphism \begin{equation}\label{weylquot} \mathbb{C}[Q^a_{k,l}] \rightarrow R, \qquad Q^a_{k,l}\mapsto q^a_{k,l},\end{equation} is generated by homogeneous polynomials $d_{I,J}$ of degree $n+1$ in the variables $Q^{a}_{k,l}$. Here $I=(i_0,\dots, i_{n})$ and $J = (j_0,\dots, j_{n})$ are lists of nonnegtive integers, where $i_r$ corresponds to either $V_{i_r}$ or $U_{i_r}$, and $j_s$ corresponds to either $V^*_{j_s}$ or $U^*_{j_s}$. We call indices $i_r$ and $j_s$ {\it bosonic} if they correspond to $V_{i_r}$ and $V^*_{j_s}$, and {\it fermionic} if they correspond to $U_{i_r}$ and $U^*_{j_s}$, respectively. Bosonic indices appearing in either $I$ or $J$ must be distinct, but fermionic incides can be repeated. Finally, $d_{I,J}$ is uniquely characterized by the condition that it changes sign if bosonic indices in either $I$ or $J$ are permuted, and remains unchanged if fermionic indices are permuted. If all indices are bosonic, \begin{equation}\label{weylrel}  d_{I,J}= \det \left[\begin{matrix} Q^1_{i_0,j_0} & \cdots & Q^1_{i_0,j_n} \cr  \vdots  & & \vdots  \cr  Q^1_{i_n,j_0}  & \cdots & Q^1_{i_n,j_n} \end{matrix}\right].\end{equation} \end{thm}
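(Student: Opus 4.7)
The plan is to deduce Theorem~\ref{weylfft} from Weyl's classical first and second fundamental theorems \cite{We} for $GL_n$ acting on the polynomial ring in finitely many vector variables, extended to allow odd (Grassmann) variables in addition to even ones.

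\emph{Reduction to finitely many copies.} Any element of $R$ involves only finitely many generators, so writing $R_N$ for the $GL_n$-invariants of
$$A_N \;=\; \text{Sym}\Big(\bigoplus_{k=0}^N (V_k \oplus V_k^*)\Big) \otimes \wedge\Big(\bigoplus_{k=0}^N (U_k \oplus U_k^*)\Big),$$
we have $R = \bigcup_N R_N$, and both the generating set and the ideal of relations pass to the direct limit. It therefore suffices to prove the FFT and SFT for each $R_N$. Identify $\bigoplus_{k=0}^N V_k = V \otimes \mathbb{C}^{N+1}$ with $V=\mathbb{C}^n$ and similarly for the other three summands.

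\emph{First fundamental theorem.} For fixed $N$, I would invoke the supersymmetric analogue of Weyl's FFT, namely the statement that the $GL_n$-invariants in $A_N$ are generated by the pairwise $GL_n$-contractions between any copy of $V$ or $U$ and any copy of $V^*$ or $U^*$; these are precisely the quadratics $q^a_{k,l}$ of \eqref{weylgenerators}. This can be obtained either from the super Howe dual pair $(GL_n,\,\gg\gl(2(N+1)\mid 2(N+1)))$ acting on $A_N$, or by Weyl's polarization argument applied separately to the even and odd variables, reducing to the single-copy case where the restriction principle identifies the four basic contractions as the only bilinear invariants.

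\emph{Second fundamental theorem.} I would first verify directly that each $d_{I,J}$ lies in the kernel of \eqref{weylquot}: substituting $q^a_{k,l} = \sum_i z_{i,k}w_{i,l}$, where $z$ and $w$ are chosen from $\{x,x',y,y'\}$ according to the boson/fermion type of the indices, turns $d_{I,J}$ into a super-wedge of $n+1$ vectors in $V=\mathbb{C}^n$ paired with $n+1$ vectors in $V^*$, antisymmetrized in bosonic slots and symmetrized in fermionic slots; this vanishes because $n+1$ vectors in an $n$-dimensional space are super-linearly dependent in exactly this sense. The symmetry characterization together with the determinantal normalization \eqref{weylrel} pins down $d_{I,J}$ uniquely up to scalar in each bosonic/fermionic bipartition type. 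For the converse — that the $d_{I,J}$ generate the full relation ideal — the all-bosonic case is Weyl's classical SFT, and the extension to mixed indices follows either from the super double-commutant theorem for the dual pair above, or by polarization from the mixed single-copy SFT.

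\emph{Expected main obstacle.} The hard part will be the SFT in the genuinely mixed (partly bosonic, partly fermionic) case. The all-even case is classical Weyl, and the all-odd case is the Pl\"ucker-type relation for $GL_n$-invariants on an exterior algebra. For the mixed case the cleanest route is the super double-commutant theorem for the Howe dual pair $(GL_n,\,\gg\gl(a\mid b))$ on a super polynomial algebra, which supplies the FFT and SFT simultaneously; a more elementary approach would adapt Weyl's Capelli argument to the super setting, introducing super-symmetrizers that track the differing symmetry of bosonic and fermionic indices. Either way the core input is that $GL_n$ factors through rank-$n$ endomorphisms of $V$, forcing any super-alternating $(n+1)$-fold tensor to vanish; the real work is in transporting this vanishing into polynomial relations among the free indeterminates $Q^a_{k,l}$ of the prescribed symmetry type.
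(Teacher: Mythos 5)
The paper gives no proof of this theorem at all: it is stated with the attribution ``(Weyl)'', the citation \cite{We}, and the one-line remark that the extension from purely even variables to mixed even/odd variables is ``an easy generalization.'' Your proposal is therefore not competing with an argument in the paper so much as supplying the outline the authors chose to omit, and as an outline it is correct: the reduction to finitely many copies, the identification of the FFT with the statement that all invariants come from the four types of contractions $q^a_{k,l}$, the direct verification that each $d_{I,J}$ dies under the substitution $Q^a_{k,l}\mapsto q^a_{k,l}$ because an appropriately super-(anti)symmetrized $(n+1)$-fold tensor over $\mathbb{C}^n$ vanishes, and the appeal to polarization or to the Howe dual pair $(GL_n,\gg\gl(p|q))$ for the converse are all the standard ingredients. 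You correctly locate the genuine content in the mixed-parity second fundamental theorem; be aware that this is not something you can fully wave through by analogy with Weyl --- it is a theorem in its own right (due essentially to Sergeev, in his work on classical invariant theory for Lie superalgebras, and also treated via super Howe duality in the Cheng--Wang literature the paper already cites for other purposes), so if you were writing this up you should cite that result explicitly rather than rederive it. One small point worth making precise in your kernel verification: since the fermionic basis vectors $y_{i,k}$ anticommute, symmetrizing $d_{I,J}$ over fermionic index slots translates, after substitution, into antisymmetrization of the underlying vectors in $\mathbb{C}^n$ in every slot, which is exactly why the single vanishing principle $\bigwedge^{n+1}\mathbb{C}^n=0$ covers all parity types at once; spelling that out would close the only step you currently describe as ``in exactly this sense.''
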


Under the identification \eqref{grisos}, the generators $q^a_{k,l}$ correspond to strong generators \begin{equation}\label{newgenomega} \begin{split} \omega^0_{k,l} = \sum_{i=1}^n :\partial^k b^i \partial^l c^i: , \qquad \omega^1_{k,l} = \sum_{i=1}^n :\partial^k\beta^i \partial^l \gamma^i:,\\ \omega^+_{k,l} = \sum_{i=1}^n :\partial^k b^i \partial^l \gamma^i:, \qquad \omega^-_{k,l} = \sum_{i=1}^n :\partial^k \beta^i \partial^l c^i: \end{split} \end{equation} of $\cV_{-n}(\widehat{\cS\cD})$, satisfying $\phi_2(\omega_{a,b}) = q_{a,b}$. In this notation, we have \begin{equation} j^{0,k} = -\omega^0_{0,k}, \qquad j^{1,k} = \omega^1_{0,k}, \qquad j^{+,k} = -\omega^+_{0,k}, \qquad j^{-,k} = \omega^-_{0,k}, \qquad k \geq 0.\end{equation} 

For each $m\geq 0$, let $A^a_m$ denote the vector space with basis $\{\omega^a_{k,l}|\ k+l = m\}$. We have $\text{dim}(A^a_{m}) = m+1$, and $\text{dim} \big(A^a_{m} / \partial(A^a_{m-1})\big) = 1$. Hence $A^a_{m}$ has a decomposition \begin{equation}\label{decompofa} A^a_{m} = \partial (A^a_{m-1})\oplus \bra j^{a,m} \ket ,\end{equation} where $\bra j^{a,m}\ket$ is the linear span of $j^{a,m}$. Clearly $\{\partial^{l} j^{0,m}|~ 0\leq l\leq m\}$ is a basis of $A_{m}$, so for $k+l = m$, $\omega^a_{k,l}\in A_{m}$ can be expressed uniquely in the form \begin{equation}\label{lincomb} \omega^a_{k,l} =\sum_{i=0}^m \lambda_i \partial^{i}j^{a,m-i},\end{equation} for constants $\lambda_i$. Hence $\{\partial^k j^{a,m}|\ k,m \geq 0\}$ and $\{\omega^a_{k,m }|\ k,m \geq 0\}$ are related by a linear change of variables. Using (\ref{lincomb}), we can define an alternative strong generating set $\{\Omega^a_{k,l}| \ k,l\geq 0\}$ for $\cM_{n}(\widehat{\cS\cD})$ by the same formula: for $k+l=m$, $$\Omega^a_{k,l} =\sum_{i=0}^m \lambda_i \partial^{i} J^{a,m-i}.$$ Clearly $\pi_{n}(\Omega^a_{k,l}) = \omega^a_{k,l}$. 

\section{The structure of the ideal $\cS\cI_{n}$}

Recall that the projection $\pi_{n}: \cM_{n}(\widehat{\cS\cD}) \ra \cV_{n}(\widehat{\cS\cD})$ with kernel $\cS\cI_{n}$ is a morphism in the category $\cR$. Under the identifications $$\text{gr}(\cM_{n}(\widehat{\cS\cD}))\cong \mathbb{C}[Q^a_{k,l}], \qquad \text{gr}(\cV_{n}(\widehat{\cS\cD}))\cong \mathbb{C}[q^a_{k,l}]/I_n,$$ $\text{gr}(\pi_{n})$ is just the quotient map \eqref{weylquot}. 

\begin{lemma} \label{ddef} For each classical relation $d_{I,J}$ there exists a unique vertex operator \begin{equation} \label{ddef} D_{I,J}\in (\cM_{n}(\widehat{\cS\cD}))_{(2n+2)}\cap \cS\cI_{n}\end{equation} satisfying \begin{equation}\label{uniquedij} \phi_{2n+2}(D_{I,J}) = d_{I,J}.\end{equation} These elements generate $\cS\cI_{n}$ as a vertex algebra ideal. \end{lemma}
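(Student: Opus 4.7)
The plan is to derive this lemma directly from Lemma \ref{idealrecon} (applied to $\pi_n$) together with Weyl's Theorem \ref{weylfft}. First I would verify the hypothesis of Lemma \ref{idealrecon}: the map $\pi_n$ must be surjective at each filtration level. This is immediate from the construction, since the filtration on $\cV_n(\widehat{\cS\cD})$ is the image filtration $(\cV_n(\widehat{\cS\cD}))_{(k)} = \pi_n\bigl((\cM_n(\widehat{\cS\cD}))_{(k)}\bigr)$. Next, via the identification $\mathrm{gr}(\cM_n(\widehat{\cS\cD})) \cong \mathbb{C}[Q^a_{k,l}]$, the induced morphism $\mathrm{gr}(\pi_n)$ is precisely the quotient map \eqref{weylquot}, so $\ker(\mathrm{gr}(\pi_n)) = I_n$. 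By Theorem \ref{weylfft}, $I_n$ is generated as an ideal by the classical relations $d_{I,J}$, and every ideal generating set is automatically a $\partial$-ideal generating set. Lemma \ref{idealrecon} then produces, for each $(I,J)$, a vertex operator $D_{I,J} \in (\cM_n(\widehat{\cS\cD}))_{(2n+2)}$ with $\phi_{2n+2}(D_{I,J}) = d_{I,J}$, such that the collection $\{D_{I,J}\}$ generates $\cS\cI_n$ as a vertex algebra ideal; in particular each $D_{I,J}$ lies in $\cS\cI_n$.

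For uniqueness, the key observation I would establish is that $(\cM_n(\widehat{\cS\cD}))_{(2n+1)} \cap \cS\cI_n = 0$. Suppose for contradiction that $\omega \in \cS\cI_n$ is nonzero and take it of minimal filtration degree $d$. Then $\phi_d(\omega)$ is a nonzero homogeneous element of $I_n = \mathrm{gr}(\cS\cI_n)$ of degree $d$. But the generators $d_{I,J}$ of $I_n$ have degree $n+1$ in the degree-$2$ variables $Q^a_{k,l}$, hence total degree $2n+2$, so $I_n$ has no nonzero homogeneous elements of degree less than $2n+2$. This forces $d \geq 2n+2$, so any element of the intersection $(\cM_n(\widehat{\cS\cD}))_{(2n+1)} \cap \cS\cI_n$ vanishes. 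Given two candidates $D_{I,J}, D'_{I,J}$ satisfying the conclusion of the lemma, their difference lies in this intersection and is therefore zero.

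The substantive mathematical obstacle is not the vertex-algebraic step (which is a clean application of Lemma \ref{idealrecon}) but rather the input from classical invariant theory, namely the precise generating set and minimal degree of $I_n$ in the mixed symmetric/exterior setting provided by Theorem \ref{weylfft}. Once that structural fact is in hand, both existence and uniqueness reduce to the filtration argument above; subsequent work in the paper will then use these $D_{I,J}$ to extract the singular vectors of weight $n + \tfrac{1}{2}$ that actually generate $\cS\cI_n$ as a $\widehat{\cS\cD}$-submodule.
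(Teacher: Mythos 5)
Your proposal is correct and follows essentially the same route as the paper: both apply Lemma \ref{idealrecon} to $\pi_n$ using the identification of $\ker(\mathrm{gr}(\pi_n))$ with $I_n$ from Theorem \ref{weylfft}, and both deduce uniqueness from the fact that $I_n$ has no nonzero elements of degree below $2n+2$, so that $D_{I,J}-D'_{I,J}\in(\cM_n(\widehat{\cS\cD}))_{(2n)}\cap\cS\cI_n=0$. Your spelled-out minimal-degree argument for why $(\cM_n(\widehat{\cS\cD}))_{(2n+1)}\cap\cS\cI_n=0$ is just a more explicit version of the paper's one-line justification.
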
 

\begin{proof}
Clearly $\pi_{n}$ maps each filtered piece $(\cM_{n}(\widehat{\cS\cD}))_{(k)}$ onto $(\cV_{n}(\widehat{\cS\cD}))_{(k)}$, so the hypotheses of Lemma \ref{idealrecon} are satisfied. Since $I_{n} = \text{Ker} (\text{gr}(\pi_{n}))$ is generated by $\{d_{I,J}\}$, we can find $D_{I,J}\in (\cM_{n}(\widehat{\cS\cD}))_{(2n+2)}\cap \cS\cI_{n}$ satisfying $\phi_{2n+2}(D_{I,J}) = d_{I,J}$, such that $\{D_{I,J}\}$ generates $\cS\cI_{n}$. If $D'_{I,J}$ also satisfies \eqref{uniquedij}, we would have $D_{I,J} - D'_{I,J}\in (\cM_{n}(\widehat{\cS\cD}))_{(2n)} \cap \cS\cI_{n}$. Since there are no relations in $\cV_{n}(\widehat{\cS\cD})$ of degree less than $2n+2$, we have $D_{I,J} - D'_{I,J}=0$. \end{proof}

Recall the generators $b^i_j, c^i_j, \beta^i_j,\gamma^i_j$ of $\text{gr}(\cF)$ corresponding to $\partial^j b^i, \partial^j c^i, \partial^j\beta^i, \partial^j\gamma^i$. Let $W\subset \text{gr}(\cF)$ be the vector space with basis $\{b^i_j, c^i_j,\beta^i_j, \gamma^i_j|\ j\geq 0\}$, and for each $m\geq 0$, let $W_m$ be the subspace with basis $\{b^i_j, c^i_j, \beta^i_j, \gamma^i_j |\ 0\leq j\leq m\}$. Let $\phi:W\ra W$ be a linear map of weight $w\geq 1$, such that \begin{equation}\label{arbmap} \phi(b^i_j) = \lambda^b_j b^i_{j+w},\qquad \phi(c^i_j) = \mu^c_j c^i_{j+w}, \qquad \phi(\beta^i_j) = \lambda^{\beta}_j \beta^i_{j+w},\qquad \phi(\gamma^i_j) = \mu^{\gamma}_j \gamma^i_{j+w}\end{equation} for constants $\lambda^{b}_j,\mu^{c}_j,\lambda^{\beta}_j,\mu^{\gamma}_j \in \mathbb{C}$ which are independent of $i$. For example, the restrictions of $j^{0,k}(k-w)$ and $j^{1,k}(k-w)$ to $W$ is such a map for $k\geq w$.

\begin{lemma} \label{third} Fix $w\geq 1$ and $m\geq 0$, and let $\phi$ be a linear map satisfying (\ref{arbmap}). Then the restriction $\phi \big|_{W_m}$ can be expressed uniquely as a linear combination of the operators $$\{j^{0,k}(k-w)\big|_{W_m}, \qquad j^{1,k}(k-w)\big|_{W_m}|\ 0\leq k-w \leq 2m+1\}.$$\end{lemma}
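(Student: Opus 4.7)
The plan is to reduce the statement to a linear independence assertion and then to a single determinant calculation. First observe that $\phi|_{W_m}$ is specified by exactly $4(m+1)$ scalars, namely $\lambda^b_j, \mu^c_j, \lambda^{\beta}_j, \mu^{\gamma}_j$ for $j = 0, \dots, m$, while the proposed family $\{j^{0,k}(k-w)|_{W_m}, j^{1,k}(k-w)|_{W_m} : 0 \leq k-w \leq 2m+1\}$ contains exactly $2(2m+2) = 4(m+1)$ operators. Hence both existence and uniqueness of the claimed expansion follow once I establish linear independence of this family inside $\Hom(W_m, W)$.

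Next, I exploit that $j^{0,k}$ is a Wick polynomial in the $b^i, c^i$ alone, while $j^{1,k}$ lives purely in $\beta^i, \gamma^i$. Consequently the operators $j^{0,k}(k-w)$ annihilate the $\beta\gamma$-summand of $W$ and $j^{1,k}(k-w)$ annihilate the $bc$-summand, so the problem splits into two parallel sub-problems. The two cases have analogous structure (with the odd fields $b, c$ replaced by the even fields $\beta, \gamma$, with appropriate sign changes coming from the opposite statistics), so it suffices to treat the $bc$-side and show that the $2m+2$ restrictions $j^{0, w+s}(s)|_{W_m \cap (W_b \oplus W_c)}$ for $s = 0, \dots, 2m+1$ are linearly independent in the $2(m+1)$-dimensional space of $GL_n$-invariant, weight-$w$ diagonal operators on this summand.

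The heart of the computation is an explicit OPE calculation. Starting from $j^{0,k} = -\sum_i :b^i \partial^k c^i:$, the mode commutator $[a(m), b(r)] = \sum_{p \geq 0}\binom{m}{p}(a\circ_p b)(m+r-p)$, the identity $(\partial^j a)(t) = (-1)^j t(t-1)\cdots(t-j+1)\, a(t-j)$, and the state normalizations $b^j_l = l!\, b^j(-l-1)|0\rangle$ and $c^j_l = l!\, c^j(-l-1)|0\rangle$, one arrives at the closed forms
\begin{equation*}
j^{0, w+s}(s) \cdot c^j_l = -P_s(l)\, c^j_{l+w}, \qquad j^{0, w+s}(s) \cdot b^j_l = (-1)^{w+s+1} R_s(l+w)\, b^j_{l+w},
\end{equation*}
where $P_s(x) = x(x-1)\cdots(x-s+1)$ is the falling factorial and $R_s(x) = (x+1)(x+2)\cdots(x+s)$ the rising factorial. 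The formula for the $c$-coefficient is immediate from the single $(z-w)^{-1}$ pole in the relevant OPE, but the $b$-coefficient requires summing a non-trivial alternating binomial expression and recognizing it as a rising factorial; this will be the main technical obstacle.

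For the final step, assemble the coefficients into a $(2m+2) \times (2m+2)$ matrix $M$ whose $s$-th row equals $((-1)^{w+s+1}R_s(l+w),\, -P_s(l))_{l = 0, \dots, m}$, so that the claim reduces to $\det M \neq 0$. The decisive observation is that $P_s(l) = 0$ whenever $0 \leq l < s$, hence for every $s \geq m+1$ the entire $c$-block of row $s$ vanishes and $M$ acquires the block form $\left(\begin{smallmatrix}A & B \\ C & 0\end{smallmatrix}\right)$ with $A, B, C$ all of size $(m+1) \times (m+1)$; block-triangularity then gives $\det M = \pm \det(B)\det(C)$. The matrix $B = (-P_s(l))_{0 \leq s, l \leq m}$ is upper-triangular with nonzero diagonal entries $-s!$. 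For $C$, the factorization $R_s(l+w) = R_{m+1}(l+w) \cdot R_{s-m-1}(l+w+m+1)$ lets me pull out the nonzero column factors $R_{m+1}(l+w)$ (each a product of positive integers, since $w \geq 1$) and reduces the problem to the evaluation matrix $[R_{s'}(l+w+m+1)]_{s', l = 0, \dots, m}$, which is Vandermonde-nonsingular because $\{R_{s'}\}_{s' = 0, \dots, m}$ is a basis of polynomials of degree $\leq m$ and the $m+1$ points $w+m+1, \dots, w+2m+1$ are distinct. Therefore $\det M \neq 0$, completing the proof.
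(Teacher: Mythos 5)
Your proof is correct, and it is essentially the argument that the paper delegates to Lemma 6 of \cite{LII}: compute the action of the modes $j^{0,w+s}(s)$ and $j^{1,w+s}(s)$ on the generators of $W_m$ explicitly, observe that the problem splits into independent $bc$- and $\beta\gamma$-blocks, and reduce the claim to the nonvanishing of a $(2m+2)\times(2m+2)$ determinant, which follows from the block-triangular structure forced by the vanishing of the falling factorials $P_s(l)$ for $l<s$ together with the upper-triangularity of the $B$-block and a Vandermonde argument for the $C$-block. The only quibble is an immaterial overall sign on the $c$-coefficients (with the paper's conventions I find $j^{0,w+s}(s)\cdot c^j_l = +P_s(l)\,c^j_{l+w}$ rather than $-P_s(l)\,c^j_{l+w}$), which merely rescales $m+1$ columns of $M$ by $-1$ and does not affect the conclusion.
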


\begin{proof} The argument is the same as the proof of Lemma 6 of \cite{LII}.\end{proof}

\begin{lemma} \label{fourth} Fix $w\geq 1$ and $m\geq 0$, and let $\phi$ be a linear map satisfying 
\begin{equation}\label{arbmapi} \phi(b^i_j) = \lambda_j \beta^i_{j+w},\qquad \phi(c^i_j) =0, \qquad \phi(\beta^i_j) = 0,\qquad \phi(\gamma^i_j) = 0.\end{equation} for constants $\lambda_j \in \mathbb{C}$ which are independent of $i$. Then the restriction $\phi \big|_{W_m}$ can be expressed as a linear combination of $j^{-,k}(k-w)$ for $0\leq k-w \leq 2m+1$.

Similarly, let $\psi$ be a linear map satisfying
\begin{equation}\label{arbmapii} \psi(b^i_j) = 0,\qquad \psi(c^i_j) = \mu_j \gamma^i_{j+w}, \qquad \psi(\beta^i_j) = 0 ,\qquad \psi(\gamma^i_j) = 0.\end{equation} for constants $\mu_j \in \mathbb{C}$. Then the restriction $\psi \big|_{W_m}$ can be expressed as a linear combination of the operators $j^{+,k}(k-w) \big|_{W_m}$ for $0\leq k-w \leq 2m+1$.\end{lemma}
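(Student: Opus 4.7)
The plan is to follow the linear-algebraic strategy of Lemma \ref{third} (adapted from Lemma 6 of \cite{LII}), now with the odd fields $j^{\pm,k}$ replacing the even fields $j^{0,k}, j^{1,k}$. Using the free-field formula $j^{-,k} = \sum_i :\beta^i \partial^k c^i:$ together with the OPEs \eqref{betagamma} and \eqref{bcope}, the only nontrivial contractions available to $j^{-,k}$ are those of the $c^i$ factor with $b^j$ and of the $\beta^i$ factor with $\gamma^j$. First I would extract the corresponding mode formulas explicitly; a direct computation of the relevant super-commutators yields, for each $k,w$ with $0 \leq k-w \leq 2m+1$,
\begin{equation*}
j^{-,k}(k-w)\cdot b^i_{j'} = P(k,j')\,\beta^i_{j'+w},\qquad j^{-,k}(k-w)\cdot \gamma^i_{j'} = Q(k,j')\,c^i_{j'+w},
\end{equation*}
while $j^{-,k}(k-w)$ annihilates $c^i_{j'}$ and $\beta^i_{j'}$. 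The scalars $P(k,j')$ and $Q(k,j')$ depend only on $k,w,j'$ (and in particular not on the color index $i$), and are given by explicit products coming from the mode calculus of the $bc\beta\gamma$-system.

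With these formulas in hand, realizing $\phi|_{W_m}$ as $\sum_{s=0}^{2m+1}\alpha_s\,j^{-,s+w}(s)|_{W_m}$ reduces to solving the square linear system
\begin{equation*}
\sum_{s=0}^{2m+1}\alpha_s P(s+w,j') = \lambda_{j'},\qquad \sum_{s=0}^{2m+1}\alpha_s Q(s+w,j') = 0,\qquad 0 \leq j' \leq m.
\end{equation*}
The first $m+1$ equations match the prescribed action on the $b$-generators, and the second $m+1$ enforce vanishing on the $\gamma$-generators. Since the number of parameters exactly equals the number of constraints, solvability for all prescribed $(\lambda_{j'})$ is equivalent to nondegeneracy of the $(2m+2)\times(2m+2)$ coefficient matrix assembled from the values $P(s+w,j')$ and $Q(s+w,j')$.

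The claim about $\psi$ is proved by identical reasoning using $j^{+,k} = -\sum_i :b^i \partial^k \gamma^i:$ in place of $j^{-,k}$. The available contractions are now $b^i$ with $c^j$ and $\gamma^i$ with $\beta^j$, so $j^{+,k}(k-w)$ sends $c^i_{j'}$ to a multiple of $\gamma^i_{j'+w}$ and $\beta^i_{j'}$ to a multiple of $b^i_{j'+w}$ while annihilating $b^i_{j'}$ and $\gamma^i_{j'}$. The resulting $(2m+2)\times(2m+2)$ system has the same shape as above, and the same invertibility argument applies.

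The main obstacle is verifying the nondegeneracy of the coefficient matrix. Because each parameter $\alpha_s$ simultaneously affects the $b$-block (through $P$) and the $\gamma$-block (through $Q$), the two halves cannot be decoupled; one must leverage the polynomial structure of $P$ and $Q$ to factor the determinant and reduce it to a nonzero Vandermonde-type expression. This is precisely the linear-algebra core of Lemma 6 of \cite{LII}, and the parallel shape of our $P$ and $Q$ here should allow that factorization argument to be reproduced essentially unchanged.
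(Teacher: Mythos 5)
Your proposal is correct in substance, but it takes a genuinely different route from the paper. The paper's own proof of Lemma \ref{fourth} is a one-line reduction to Lemma \ref{third} via the $\gg\gl(1|1)$ structure: since $j^{0,k}$ involves only $b,c$ and $j^{1,k}$ only $\beta,\gamma$, Lemma \ref{third} already produces a combination $\Phi=\sum_k a_k\, j^{0,k}(k-w)\big|_{W_m}$ acting by $b^i_j\mapsto \lambda_j b^i_{j+w}$ and killing $c^i_j,\beta^i_j,\gamma^i_j$; taking the commutator with the odd zero mode $j^{-,0}(0)$ (which sends $b^i_j\mapsto \beta^i_j$, $\gamma^i_j\mapsto -c^i_j$, and kills $c,\beta$) and using $j^{-,0}\circ_0 j^{0,k}=-j^{-,k}$ yields $-\sum_k a_k\, j^{-,k}(k-w)$ acting exactly as the prescribed $\phi$, with no new determinant to check; the $j^{+,k}$ case is obtained the same way from $j^{+,0}(0)$. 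You instead redo the linear-algebraic core from scratch: you set up the correct $(2m+2)\times(2m+2)$ system (your description of which generators $j^{\mp,k}(k-w)$ contract with, and which they annihilate, is accurate), and you correctly identify that everything hinges on the nondegeneracy of the resulting matrix. That determinant really is the same one as in Lemma 6 of \cite{LII} up to signs, since the $b$--$c$ and $\beta$--$\gamma$ contractions have identical singular parts up to sign, so your appeal to that computation is legitimate --- and the paper holds its own Lemma \ref{third} to the same standard of citing \cite{LII}. The trade-off: your argument is self-contained modulo that determinant but duplicates work the paper has already banked, whereas the paper's argument is softer and shorter because it transports the even case to the odd case by a symmetry rather than re-solving a linear system. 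The one point worth tightening in your write-up is that the invertibility is asserted by analogy rather than verified; if you want to avoid even that, the $\gg\gl(1|1)$ conjugation gives it to you for free.
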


\begin{proof} This is easy to extract from Lemma \ref{third} using the $\gg\gl(1|1)$ structure. 
\end{proof}

Let $\bra D_{I,J} \ket$ denote the vector space with basis $\{D_{I,J}\}$ where $I,J$ are as in Theorem \ref{weylfft}. We have $\bra D_{I,J} \ket = (\cM_{n}(\widehat{\cS\cD}))_{(2n+2)}\cap \cS\cI_{n}$, and clearly $\bra D_{I,J} \ket$ is a module over the Lie algebra $\widehat{\cS\cP} \subset \widehat{\cS\cD}$ generated by $\{J^{a,k}(m) |~ m,k \geq 0\}$, since $\widehat{\cS\cP}$ preserves both the filtration on $\cM_{n}(\widehat{\cS\cD})$ and the ideal $\cS\cI_{n}$. The action of $\widehat{\cS\cP}$ on $\bra D_{I,J} \ket$ is by \lq\lq weighted derivation" in the following sense. Given $I = (i_0,\dots,i_{n})$, $J= (j_0,\dots,j_{n})$ and given an even operator $\phi \in \widehat{\cS\cP}$ satisfying \eqref{arbmap}, we have 
\begin{equation} \label{weightederivation} \phi(D_{I,J}) = \sum_{r=0}^{n} \lambda_{i_r} D_{I^r,J} + \mu_{j_r} D_{I,J^r},\end{equation} for lists $I^r = (i_0,\dots, i_r + w ,\dots, i_{n})$ and $J^r = (j_0,\dots,  j_r + w,\dots, j_{n})$. Here $\lambda_{i_r} = \lambda^b_{i_r}$ if $i_r$ is fermionic, and $\lambda_{i_r} = \lambda^{\beta}_{i_r}$ if $i_r$ is bosonic. Moreover, $i_r+w$ has the same parity as $i_r$, i.e., it is bosonic (respectively fermionic) if and only if $i_r$ is. Similarly, $\mu_{j_r} = \mu^c_{j_r}$ if $j_r$ is fermionic, and $\mu_{j_r} = \mu^{\gamma}_{j_r}$ if $j_r$ is bosonic, and the parities of $j_r$ and $j_r+w$ are the same. The odd operators $\phi \in \widehat{\cS\cP}$ given by Lemma \ref{fourth} have a similar derivation property except that they reverse the parity of the entries $i_r$ and $j_r$.

For each $n\geq 1$, there are four distinguished elements in $\bra D_{I,J} \ket$, which correspond to $I = (0,\dots, 0) = J$. Define $D_+$ to be the element where all entries of $I$ are fermionic, and one entry $J$ is bosonic. Similarly, define $D_-$ to be the element where one entry of $I$ is bosonic and all entries of $J$ are fermionic. Finally, define $D_0$ to be the element where all entries in both $I$ and $J$ are fermionic, and define $D_1$ to be the element where one entry of $I$ and one entry of $J$ are bosonic. Clearly $D_0, D_1, D_+, D_-$ have weights $n+1$, $n+1$, $n+1/2$, and $n+3/2$, respectively. It is clear that $D_+$ is the unique element of $\cS\cI_{n}$ of minimal weight $n+1/2$, and hence is a singular vector in $\cM_{n}(\widehat{\cS\cD})$. 

\begin{thm} \label{uniquesv} $D_+$ generates $\cS\cI_{n}$ as a vertex algebra ideal.\end{thm}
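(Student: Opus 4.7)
The plan is to reduce the statement to a finite-dimensional reachability problem inside $\bra D_{I,J}\ket$ and then solve it using the weighted-derivation action provided by Lemmas \ref{third} and \ref{fourth}. By Lemma \ref{ddef}, $\cS\cI_n$ is generated as a vertex algebra ideal by $\{D_{I,J}\}$, so it suffices to show that each $D_{I,J}$ lies in the vertex algebra ideal $\cJ$ generated by $D_+$. Since the Lie algebra $\widehat{\cS\cP}$ preserves both the filtration on $\cM_n(\widehat{\cS\cD})$ and the submodule $\cS\cI_n$, it preserves
$$\bra D_{I,J}\ket \;=\; (\cM_n(\widehat{\cS\cD}))_{(2n+2)}\cap\cS\cI_n,$$
and $\cJ\cap\bra D_{I,J}\ket$ is a $\widehat{\cS\cP}$-submodule containing $D_+$. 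So the claim reduces to showing that the $\widehat{\cS\cP}$-submodule of $\bra D_{I,J}\ket$ generated by $D_+$ is the whole space.

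To achieve this I would invoke the weighted derivation formula \eqref{weightederivation}. Lemma \ref{third} shows that every parity-preserving derivation of the form \eqref{arbmap}, restricted to any finite range $W_m$, is a linear combination of modes of $j^{0,k}$ and $j^{1,k}$; hence even operators that raise the index of any prescribed entry of $I$ or $J$ of fixed parity are available. Lemma \ref{fourth} supplies the two odd operations $b\mapsto\beta$ and $c\mapsto\gamma$, which flip a single fermionic entry of $I$ or $J$ to a bosonic one while shifting its index by $w\geq 1$. The reverse operations $\beta\mapsto b$ and $\gamma\mapsto c$ are obtained by the same linear-algebra argument applied to the modes of $j^{\pm,k}$ that act nontrivially on $\beta$ and $\gamma$, so the full set of parity-flipping weighted derivations lies in $\widehat{\cS\cP}$.

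From $D_+$ I would then reach every $D_{I,J}$ in two phases. First, applying odd derivations with Kronecker-delta coefficients ($\lambda_0=1$, $\lambda_j=0$ for $j\neq 0$) converts parity types one entry at a time; by the fermionic symmetry of $d_{I,J}$, each such step collapses the sum in \eqref{weightederivation} to a nonzero multiple of a single $D_{I',J'}$, and a short chain of such moves produces, for every parity pattern $(k,l)$, a $D_{I,J}$ whose $(k,l)$ bosonic entries and remaining fermionic entries occupy the minimum admissible indices. Second, even derivations from Lemma \ref{third} with similarly chosen delta coefficients raise prescribed indices, and induction on the multiset of indices yields every remaining $D_{I,J}$ within each parity type.

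The main technical obstacle is the bookkeeping in the first phase: one must verify that at each step the scalar produced by \eqref{weightederivation} is nonzero, and that bosonic antisymmetry together with the distinctness constraint on bosonic indices does not cause the intended target $D_{I',J'}$ to vanish. This is handled by keeping the coefficient $\lambda$ supported at a single index per step, so that only one entry of $D_{I,J}$ is activated at a time, and by tracking signs via the antisymmetry of $d_{I,J}$ in its bosonic arguments.
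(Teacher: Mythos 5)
Your reduction is the same as the paper's: since Lemma \ref{ddef} says $\cS\cI_n$ is generated by $\bra D_{I,J}\ket=(\cM_n(\widehat{\cS\cD}))_{(2n+2)}\cap\cS\cI_n$, and a vertex algebra ideal is stable under all modes $J^{a,k}\circ_m$, it suffices to show that $D_+$ generates $\bra D_{I,J}\ket$ as a $\widehat{\cS\cP}$-module. The paper then argues by induction on $|I|+|J|$, also using the weighted-derivation action from Lemmas \ref{third} and \ref{fourth}, so your overall route is the intended one. However, there is a genuine gap in your ``phase 1.'' All of the parity-flipping operators supplied by Lemma \ref{fourth} have weight $w\geq 1$, so each flip of a fermionic entry to a bosonic one necessarily \emph{raises} the index by at least $1$; the same is true of the even raising operators of Lemma \ref{third} and of your proposed reverse flips. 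Consequently, starting from $D_+$ (all entries of $I$ fermionic at index $0$, one bosonic $0$ in $J$), your moves can never create a bosonic entry at index $0$ in $I$, nor remove the bosonic entry of $J$ to reach a purely fermionic $J$. In particular $D_0$, $D_1$ and $D_-$ --- precisely the other three distinguished elements with $I=(0,\dots,0)=J$ --- are unreachable by the operations you list, and with them every $D_{I,J}$ whose parity pattern requires a bosonic index $0$ where $D_+$ has none.

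The only parity flips with index shift $0$ are the $\gg\gl(1|1)$ operators $J^{\pm,0}\circ_0$, and these cannot be localized to a single entry by Kronecker-delta coefficients: there is just one such operator in each direction, and it acts on \emph{all} index-$0$ entries simultaneously. This is exactly why the paper's base case is delicate: one computes $J^{-,0}\circ_0 D_+=D_0+(n+1)D_1$, then separates the two summands by applying an even operator sending $\beta^i_0\mapsto\beta^i_1$ (which annihilates $D_0$) followed by the index-\emph{lowering} operator $J^{1,1}\circ_2$ (with $j^{1,1}(2)(\beta^i_1)=2\beta^i_0$) to recover $D_1$, hence $D_0$, and finally $D_-=J^{-,0}\circ_0 D_0$. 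Nothing in your proposal plays the role of this disentangling step, and no combination of $w\geq 1$ operators can substitute for it. Two smaller points: your use of the reverse flips $\beta\mapsto b$, $\gamma\mapsto c$ goes beyond what Lemma \ref{fourth} literally states (it is plausible by the same Vandermonde-type argument, but you should say so and check it); and a delta-coefficient operator activates \emph{every} entry with the given index and parity, not just one --- the sum collapses only because of the fermionic symmetry of $d_{I,J}$, which is worth making explicit.
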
 
\begin{proof}
Since $\cS\cI_{n}$ is generated by $\bra D_{I,J} \ket$ as a vertex algebra ideal, it suffices to show that $\bra D_{I,J} \ket$ is generated by $D_+$ as a module over $\widehat{\cS\cP}$. Let $\cS\cI'_{n}$ denote the ideal in $\cM_n(\widehat{\cS\cD})$ generated by $D_+$, and let $\bra D_{I,J} \ket^{(m)}$ denote the subspace spanned by elements $D_{I,J}$ with $|I| + |J| = m$. We will prove by induction on $m$ that $\bra D_{I,J} \ket^{(m)} \subset \cS\cI'_n$. 

First we need to show that $\bra D_{I,J} \ket ^{(0)} \subset \cS\cI'_n$, i.e.,  $D_0$, $D_1$, and $D_-$ lie in $\cS\cI'_n$. Note that $J^{-,0} \circ_0 D_+ = D_0 + (n+1)D_1$, so $D_0 + (n+1)D_1$ lies in $\cS\cI'_n$. By Lemma \ref{third}, we can find $\phi\in \widehat{\cS\cP}$ such that \begin{equation} \begin{split} & \phi(\beta^i_0) = \beta^i_1, \qquad \phi(\beta^i_r) = 0,\qquad r>0, \\ & \phi(\gamma^i_s) = 0, \qquad \phi(c^i_s) = 0, \qquad \phi(b^i_s) = 0, \qquad s\geq 0.\end{split}\end{equation} We have $\phi(D_0) = 0$ and $\phi(D_1) = D_{I,J}$ where $I= (1,0,\dots, 0)$ and $J = (0,\dots, 0)$. Moreover, the entry $1$ in $I$ is bosonic, and all other entries of $I$ are fermionic, and $J$ contains one bosonic entry and $n-1$ fermionic entries. It follows that $\phi(D_0 + (n+1)D_1) = (n+1)D_{I,J}$, so $D_{I,J} \in \cS\cI'_n$. Next, note that $j^{1,1}(2)( \beta^i_1) = 2\beta^i_0$, so $J^{1,1} \circ_2(D_{I,J}) = 2 D_1$. This shows that $D_1 \in \cS\cI'_n$, so $D_0 \in \cS\cI'_n$ as well. Finally, $J^{-,0} \circ_0 (D_0) = D_-$, so $D_-$ also lies in $\cS\cI'_n$.

For $m>0$, we assume inductively that $\bra D_{I,J} \ket^{(r)}$ lies in $\cS\cI'_n$ for $0\leq r <m$. Fix $I = (i_0,\dots, i_{n})$ and $J = (j_0,\dots, j_n)$ with $|I|+|J| = m$. 

{\bf Case 1}: $I = (0,\dots, 0)$, and $j_0,\dots, j_n$ are all fermionic. Since $m>0$, at least one of the $j_k$'s is nonzero. Let $J'$ be obtained from $J$ by replacing $j_k$ with $0$. By Lemma \ref{third}, we can find $\phi \in \widehat{\cS\cP}$ with the property that \begin{equation} \begin{split} & \phi( c^i_0) =  c^i_{j_k}, \qquad \phi(c^i_r) = 0, \qquad r>0, \\& \phi(\gamma^i_s) = 0, \qquad \phi(b^i_s) =0, \qquad\phi(\beta^i_s) = 0, \qquad s\geq 0.\end{split}\end{equation} Then $\phi(D_{I,J'}) = \lambda D_{I,J}$ where $\lambda$ is a nonzero constant depending on the number of indices appearing in $J$ which are zero. Since $D_{I,J'} \in  \bra D_{I,J}\ket^{(m-j_k)} \subset \cS\cI'_n$, we have $D_{I,J} \in \cS\cI'_n$.

\smallskip

{\bf Case 2}: $I = (0,\dots, 0)$, and for some $0\leq r<n$, $j_0,\dots, j_r$ are fermionic and $j_{r+1},\dots, j_n$ are bosonic. If one of the fermionic entries $j_k\neq 0$ for $0\leq k \leq r$, we proceed as in Case 1. If $j_0 = \cdots = j_r$, there exists $j_k>0$ for some $ k=r+1,\dots, n$. Let $J'$ be obtained from $J$ by replacing the {\it bosonic} entry $j_k$ with the {\it fermionic} entry $0$. Then $D_{I,J'}\in \bra D_{I,J}\ket^{(m-j_k)} \subset \cS\cI'_n$. Using Lemma \ref{fourth}, we can find $\phi \in  \widehat{\cS\cP}$ such that \begin{equation} \begin{split} &\phi( c^i_0) = \gamma^i_{j_k}, \qquad \phi(c^i_r)  =0, \qquad r>0,\\ & \phi(\gamma^i_s) = 0, \qquad \phi(b^i_s)  =0, \qquad \phi(\beta^i_s)  =0, \qquad s\geq 0.\end{split}\end{equation}
It follows that, up to a nonzero constant, $\phi(D_{I,J'}) = D_{I,J}$, so $D_{I,J} \in \cS\cI'_n$.

{\bf Case 3}: $I \neq (0,\dots,0)$. This is the same as Cases 1 and 2 with the roles of $I$ and $J$ reversed. \end{proof}

\section{A minimal strong finite generating set for $\cV_n(\widehat{\cS\cD})$}

Recall that $\{j^{0,k}|\ k\geq 0\}$ generates a copy of $\cW_{1+\infty,n}$ inside $\cV_n(\widehat{\cS\cD})$. It is well known \cite{FKRW} that the relation $D_0$ above is a singular vector for the action of the Lie subalgebra $\widehat{\cP} \subset \widehat{\cS\cP}$, and is of the form $$J^{0,n} - P(J^{0,0},\dots, J^{0,n-1}),$$ where $P$ is a normally ordered polynomial in $J^{0,0},\dots, J^{0,n-1}$ and their derivarives. Applying the projection $\pi_n:\cM_n(\widehat{\cS\cD}) \ra \cV_n(\widehat{\cS\cD})$ yields a decoupling relation
$$j^{0,n} = P(j^{0,0},\dots, j^{0,n-1}).$$ This relation is responsible for the isomorphism $\cW_{1+\infty,n} \cong \cW(\gg\gl(n))$. In fact, by applying the operator $j^{0,2} \circ_1$ repeatedly, it is easy to construct higher decoupling relations \begin{equation} \label{decoupfirst} j^{0,m} = P_m(j^{0,0}, j^{0,1},\dots, j^{0,n-1}),\qquad m\geq n.\end{equation} In particular, $\{j^{0,k}|\ 0\leq k<n\}$ strongly generates $\cW_{1+\infty,n}$. There are no nontrivial normally ordered polynomial relations among these generators and their derivatives, so they {\it freely} generate $\cW_{1+\infty,n}$.

\begin{thm} \label{sfgvsg} The set $\{ j^{0,k}, j^{1,k}, j^{+,k}, j^{-,k}|\ k=0,1,\dots, n-1\}$ is a minimal strong generating set for $\cV_n(\widehat{\cS\cD})$ as a vertex algebra.
\end{thm}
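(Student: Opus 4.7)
The plan is to reduce strong generation to producing explicit decoupling relations for each of the four towers $\{j^{a,m}\}_{m\ge n}$, $a\in\{0,1,+,-\}$, and to obtain minimality from the $\partial$-ring structure of $\text{gr}(\cV_n(\widehat{\cS\cD}))$ described by Theorem \ref{weylfft}. The $a=0$ tower is already decoupled by (\ref{decoupfirst}): one has $j^{0,m}=P_m(j^{0,0},\ldots,j^{0,n-1})$ for every $m\ge n$, thanks to the $\cW_{1+\infty,n}\cong\cW(\gg\gl(n))$ substructure and the trick of iterating $j^{0,2}\circ_1$.

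For the remaining three towers I would exploit the $\gg\gl(1|1)$-symmetry of $\cV_n(\widehat{\cS\cD})$. The zero modes $j^{\pm,0}(0)$ are odd derivations of the Wick product commuting with $\partial$, and reading off (\ref{gl11struc}) one gets
\begin{align*}
j^{+,0}(0)(j^{0,k})&=j^{+,k}, & j^{-,0}(0)(j^{0,k})&=-j^{-,k}, & j^{+,0}(0)(j^{-,k})&=-j^{0,k}-j^{1,k},
\end{align*}
for every $k\ge 0$, the last identity being a straightforward extension of the $k=0$ case via the type of argument used in Lemma \ref{weakfg}. Applying $j^{+,0}(0)$ to the relation $j^{0,m}=P_m$ then yields $j^{+,m}=j^{+,0}(0)(P_m)$, a normally ordered polynomial in $\{j^{0,k},j^{+,k}\mid k<n\}$ and their derivatives; applying $j^{-,0}(0)$ instead yields $j^{-,m}=-j^{-,0}(0)(P_m)$ in terms of $\{j^{0,k},j^{-,k}\mid k<n\}$. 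A final application of $j^{+,0}(0)$ to the $j^{-,m}$-decoupling, combined with $j^{0,m}=P_m$ and with the identity $j^{+,0}(0)(j^{-,k})=-j^{0,k}-j^{1,k}$ applied term by term, produces the decoupling of $j^{1,m}$ in terms of all four towers for $k<n$.

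For minimality I would argue in the associated graded. By (\ref{lincomb}) the proposed generating set maps under $\phi_2$ (modulo lower-degree terms) to $\{q^a_{0,k}\mid 0\le k<n,\ a\in\{0,1,\pm\}\}$, and using (\ref{decompofa}) this set generates $\text{gr}(\cV_n(\widehat{\cS\cD}))$ as a $\partial$-ring. Moreover, by Theorem \ref{weylfft} the relation ideal $I_n$ is generated in polynomial degree $n+1$, so no relation expresses any $q^a_{0,k}$ with $k<n$ in terms of strictly lower generators; the minimum number of $\partial$-ring generators is thus $4n$. Since any strong generating set of $\cV_n(\widehat{\cS\cD})$ projects to a $\partial$-ring generating set of $\text{gr}(\cV_n(\widehat{\cS\cD}))$, the proposed $4n$-element set cannot be shortened.

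The main obstacle I foresee is verifying $j^{+,0}(0)(j^{-,k})=-(j^{0,k}+j^{1,k})$ for all $k\ge 0$. The $k=0$ case is read off from (\ref{gl11struc}), and for arbitrary $k$ this should follow either by a direct computation of $[J^{+,0}_0,J^{-,k}_n]$ from the super-bracket and the cocycle (\ref{supercocycle}), or inductively by moving $j^{0,2}\circ_1$ through the $\gg\gl(1|1)$-commutation relations in the style of the higher $j^{0,m}$ decouplings. Once this identity is in place, the rest of the argument is routine propagation through the Leibniz rule for zero-mode derivations.
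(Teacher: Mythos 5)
Your proposal is correct and follows essentially the same route as the paper: decoupling relations for $j^{0,m}$ from the $\cW_{1+\infty,n}\cong\cW(\gg\gl(n))$ substructure, propagated to the other three towers by acting with $j^{+,0}\circ_0$ and $j^{-,0}\circ_0$ via the identities $J^{+,0}\circ_0\partial^mJ^{0,k}=\partial^mJ^{+,k}$, $J^{-,0}\circ_0\partial^mJ^{0,k}=-\partial^mJ^{-,k}$, and $J^{+,0}\circ_0\partial^mJ^{-,k}=-\partial^mJ^{0,k}-\partial^mJ^{1,k}$, with minimality from Weyl's second fundamental theorem. The identity you flag as the main obstacle is exactly the one the paper uses (and it does follow from the super-bracket as you suggest), so there is no gap.
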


\begin{proof} We shall find all the necessary decoupling relations by acting on the relations \eqref{decoupfirst} by the copy of $\gg\gl(1|1)$ spanned by $j^{a,0}\circ_0$ for $a=0,1,\pm$. First, acting by $j^{+,0}\circ_0$ on the relations \eqref{decoupfirst} and using the fact that 
$$J^{+,0}\circ_0 \partial^m J^{0,k} =  \partial^mJ^{+,k},$$ 
we get relations
\begin{equation} \label{decoupsecond}
j^{+,m} = Q_m(j^{0,0}, j^{+,0}, j^{0,1},j^{+,1},\dots, j^{0,n-1} , j^{+,n-1}),
\end{equation} for $m\geq n$. Similarly, acting on \eqref{decoupfirst} by $j^{-,0}\circ_0$ and using
$$J^{-,0}\circ_0 \partial^m J^{0,k} =  -\partial^m J^{-,k},$$ 
we obtain decoupling relations
\begin{equation} \label{decoupthird}
j^{-,m} = R_m(j^{0,0}, j^{-,0}, j^{0,1},j^{-,1},\dots, j^{0,n-1} , j^{-,n-1}),
\end{equation} for $m\geq n$.
Finally, acting by $j^{+,0}\circ_0$ on \eqref{decoupthird} and using
$$J^{+,0}\circ_0\partial^m J^{-,k} =  -\partial^m J^{0,k} - \partial^m J^{1,k},$$
we obtain relations
\begin{equation} \label{decoupfourth} j^{0,m} + j^{1,m} = S_m(j^{0,0}, j^{1,0}, j^{+,0}, j^{-,0}, j^{0,1}, j^{1,1}, j^{+,1}, j^{-,1},\dots, j^{0,n-1}, j^{1,n-1}, j^{+,n-1}, j^{-,n-1}),\end{equation} for $m\geq n$. We can subtract from this the relation \eqref{decoupfirst}, obtaining 
\begin{equation} \label{decoupfifth}  
j^{1,m} = T_m(j^{0,0}, j^{1,0}, j^{+,0}, j^{-,0}, j^{0,1}, j^{1,1}, j^{+,1}, j^{-,1},\dots, j^{0,n-1}, j^{1,n-1}, j^{+,n-1}, j^{-,n-1}).
\end{equation}
The relations \eqref{decoupfirst}-\eqref{decoupthird} and \eqref{decoupfifth} imply that $\{ j^{0,k}, j^{1,k}, j^{+,k}, j^{-,k}|\ k=0,1,
\dots, n-1\}$ strongly generates $\cV_n(\widehat{\cS\cD})$. The fact that this set is {\it minimal} is a consequence of Weyl's second fundamental theorem of invariant theory for $GL_n$; there are no relations of weight less than or equal to $n+1/2$.
\end{proof}

\subsection{The cases $n=1$ and $n=2$}
It is immediate from Theorem \ref{sfgvsg} that $\cV_1(\widehat{\cS\cD}) \cong V_1(\gg\gl(1|1))$. In the case $n=2$, the decoupling relations for $j^{a,2}$ for $a=0,1,\pm$ are as follows:

 \begin{equation} \begin{split}
& j^{0,2} = -\frac{1}{6} :j^{0,0}j^{0,0}j^{0,0}: - \frac{1}{2} : j^{0,0} \partial j^{0,0}: + :j^{0,0}j^{0,1}: + \partial j^{0,1}  - 
\frac{1}{6} \partial^2 j^{0,0}, \\ 
& j^{+,2} = -\frac{1}{2} : j^{+,0} j^{0,0} j^{0,0}:  - \frac{1}{2} :j^{+,0} \partial j^{0,0}: + :j^{+,1} j^{0,0}: + 
 :j^{+,0} j^{0,1}: ,\\ 
 & j^{-,2} = -\frac{1}{2}  :j^{-,0} j^{0,0} j^{0,0}:  - \frac{1}{2}  :j^{-,0}  \partial j^{0,0}: - :\partial j^{-,0} j^{0,0}: + 
 : j^{-,1} j^{0,0}: \\
 & \qquad + :j^{-,0} j^{0,1}: -  \partial^2 j^{-,0}  + 2 \partial j^{-,1},\\
 & j^{1,2} = -:j^{-,0}j^{+,0}j^{0,0}: - \frac{1}{2} :j^{1,0}j^{0,0}j^{0,0}: - \frac{1}{3} :j^{0,0}j^{0,0}j^{0,0}: - 
 : \partial j^{-,0} j^{+,0}:   \\ 
& \qquad + :j^{-,1} j^{+,0}: + :j^{-,0}j^{+,1}:  - :\partial j^{1,0}j^{0,0}: - 
\frac{1}{2} :j^{1,0} \partial j^{0,0}: + :j^{1,1}j^{0,0}: \\
&\qquad  + :j^{1,0}j^{0,1}: - :j^{0,0}\partial j^{0,0}:  + 
 :j^{0,0}j^{0,1}: - \frac{1}{3} \partial^2 j^{0,0} + \partial j^{0,1 } - \partial^2 j^{1,0} + 2 \partial j^{1,1} .\end{split} \end{equation}

Since the original generating set $\{j^{a,k}|\ k\geq 0\}$ closes linearly under OPE, these decoupling relations allow us to write down all nonlinear OPE relations in $\cV_2(\widehat{\cS\cD})$ among the strong generating set $\{j^{a,k}|\ k=0,1\}$. For example, 
\begin{equation} j^{-,1}(z) j^{+,1}(w) \sim 2 (z-w)^{-4} + \big(j^{1,1}-j^{0,1}\big)(w)(z-w)^{-2} + \big(\partial j^{1,1}-j^{1,2}-j^{0,2}\big)(w)(z-w)^{-1},\end{equation} which yields
  \begin{equation} \begin{split} 
j^{-,1}(z) &j^{+,1}(w) \sim 2 (z-w)^{-4} + \big(j^{1,1}-j^{0,1}\big)(w)(z-w)^{-2} \\ 
& + \bigg(: j^{-,0} j^{+,0} j^{0,0}: + \frac{1}{2} :j^{1,0} j^{0,0} j^{0,0}: + \frac{1}{2} :j^{0,0} j^{0,0}j^{0,0}: + 
 :\partial j^{-,0} j^{+,0}:  - :j^{-,1} j^{+,0}: \\
 &- :j^{-,0} j^{+,1}:  +:\partial j^{1,0} j^{0,0}: + 
 \frac{1}{2} : j^{1,0} \partial j^{0,0}:  - :j^{1,1} j^{0,0}: - :j^{1,0} j^{0,1}: + 
 \frac{3}{2} :j^{0,0} \partial j^{0,0}:\\
&   - 2 : j^{0,0} j^{0,1}: + \frac{1}{2} \partial^2 j^{0,0} - 2 \partial j^{0,1}+ \partial^2 j^{1,0} - 
  \partial j^{1,1}  \bigg)(w)(z-w)^{-1}.\end{split} \end{equation}
 
Similarly, we have the following additional nonlinear OPEs: 
\begin{equation} 
\begin{split} 
j^{0,1}(z) j^{-,1}(w) &\sim j^{-,1}(w) (z-w)^{-2} + \bigg(-\frac{1}{2}  :j^{-,0} j^{0,0} j^{0,0}:  - \frac{1}{2}  :j^{-,0}  \partial j^{0,0}: - :\partial j^{-,0} j^{0,0}: \\ & + : j^{-,1} j^{0,0}:  + :j^{-,0} j^{0,1}: -  \partial^2 j^{-,0}  + 2 \partial j^{-,1}\bigg)(w)(z-w)^{-1},
\end{split} 
\end{equation}
\begin{equation}
\begin{split}
j^{1,1}(z) &j^{-,1}(w)  \sim j^{-,1}(w) (z-w)^{-2} +\bigg( \frac{1}{2}  :j^{-,0} j^{0,0} j^{0,0}:   + \frac{1}{2}  :j^{-,0}  \partial j^{0,0}:  \\& + :\partial j^{-,0} j^{0,0}:
 - : j^{-,1} j^{0,0}:  - :j^{-,0} j^{0,1}: +  \partial^2 j^{-,0}  -  \partial j^{-,1}\bigg)(w) (z-w)^{-1}.
\end{split} 
\end{equation}
 \begin{equation} 
\begin{split}
j^{0,1}(z) j^{+,1}(w) & \sim j^{+,1}(w) (z-w)^{-2} + \bigg( \frac{1}{2} : j^{+,0} j^{0,0} j^{0,0}: + \frac{1}{2} :j^{+,0} \partial j^{0,0}: \\ & - :j^{+,1} j^{0,0}: -
 :j^{+,0} j^{0,1}: +\partial j^{+,1}\bigg)(w)(z-w)^{-1}.
\end{split} 
\end{equation}
\begin{equation} 
\begin{split}j^{1,1}(z) j^{+,1}(w) &\sim j^{+,1}(w) (z-w)^{-2} + \bigg(-\frac{1}{2} : j^{+,0} j^{0,0} j^{0,0}:  - \frac{1}{2} :j^{+,0} \partial j^{0,0}: \\ &  + :j^{+,1} j^{0,0}: + 
 :j^{+,0} j^{0,1}:\bigg)(w) (z-w)^{-1}.
\end{split} 
\end{equation}
The remaining nontrivial OPEs in $\cV_2(\widehat{\cS\cD})$ are linear in the generators, and are omitted.

\section{A deformable family with limit $\cV_n(\widehat{\cS\cD})$} \label{sectiondefsv}
We will construct a deformable family of vertex algebras $\cB_{n,k}$ with the property that $ \cB_{n,\infty} = \lim_{k\ra \infty} \cB_{n,k} \cong \cV_n(\widehat{\cS\cD})$. The key property will be that for generic values of $k$, $\cB_{n,k}$ has a minimal strong generating set consisting of $4n$ fields, and has the same graded character as $\cV_n(\widehat{\cS\cD})$. 

First, we need to formalize what we mean by a deformable family. Let $K \subset \mathbb{C}$ be a subset which is at most countable, and let $F_K$ denote the $\mathbb{C}$-algebra of rational functions in a formal variable $\kappa$ of the form $\frac{p(\kappa)}{q(\kappa)}$ where $\text{deg}(p) \leq \text{deg}(q)$ and the roots of $q$ lie in $K$. A {\it deformable family} will be a free $F_K$-module $\cB$ with the structure of a vertex algebra with coefficients in $F_K$.  Vertex algebras over $F_K$ are defined in the same way as ordinary vertex algebras over $\mathbb{C}$. We assume that $\cB$ possesses a $\mathbb{Z}_{\geq 0}$-grading $\cB = \bigoplus_{m\geq 0} \cB[m]$ by conformal weight where each $\cB[m]$ is free $F_K$-module of finite rank. For $k\notin K$, we have a vertex algebra $$\cB_k = \cB / (\kappa - k),$$ where $(\kappa - k)$ is the ideal generated by $\kappa - k$. Clearly $\text{dim}_{\mathbb{C}}(\cB_k[m]) = \text{rank}_{F_K} (\cB[m])$ for all $k \notin K$ and $m\geq 0$. We have a vertex algebra $\cB_{\infty} = \lim_{\kappa\ra \infty} \cB$ with basis $\{\alpha_i|\ i\in I\}$, where $\{a_i|\ i \in I\}$ is any basis of $\cB$ over $F_K$, and $\alpha_i = \lim_{\kappa \ra \infty} a_i$. By construction, $\text{dim}_{\mathbb{C}}(\cB_{\infty}[m]) = \text{rank}_{F_K}(\cB[m])$ for all $m\geq 0$. The vertex algebra structure on $\cB_{\infty}$ is defined by \begin{equation} \alpha_i \circ_n \alpha_j = \lim_{\kappa \ra \infty} a_i \circ_n a_j, \qquad i,j\in I, \qquad n\in \mathbb{Z}. \end{equation} The $F_K$-linear map $\phi: \cB \ra \cB_{\infty}$ sending $a_i \mapsto \alpha_i$ satisfies \begin{equation} \label{preservecircle} \phi(\omega \circ_n \nu) = \phi(\omega) \circ_n \phi(\nu), \qquad \omega,\nu \in \cB, \qquad n\in \mathbb{Z}.\end{equation} Moreover, all normally ordered polynomial relations $P(\alpha_i)$ among the generators $\alpha_i$ and their derivatives are of the form $$\lim_{\kappa \ra \infty} \tilde{P}(a_i),$$ where $\tilde{P}(a_i)$ is a normally ordered polynomial relation among the $a_i$'s and their derivatives, which converges termwise to $P(\alpha_i)$. In other words, suppose that $$P(\alpha_i) = \sum_j c_j m_j(\alpha_i)$$ is a normally ordered relation of weight $d$, where the sum runs over all normally ordered monomials $m_j(\alpha_i)$ of weight $d$, and the coefficients $c_j$ lie in $\mathbb{C}$. Then there exists a relation $$\tilde{P}(a_i) = \sum_j c_j(\kappa) m_j(a_i)$$ where $\lim_{\kappa\ra \infty} c_j(\kappa) = c_j$ and $m_j(a_i)$ is obtained from $m_j(\alpha_i)$ by replacing $\alpha_i$ with $a_i$.

We are interested in the relationship between strong generating sets for $\cB_{\infty}$ and $\cB$. 

\begin{lemma} \label{passage} Let $\cB$ be a vertex algebra over $F_K$ as above. Let $U = \{\alpha_i|\ i\in I\}$ be a strong generating set for $\cB_{\infty}$, and let $T = \{a_i|\ i\in I\}$ be the corresponding subset of $\cB$, so that $\phi(a_i) = \alpha_i$. There exists a subset $S\subset \mathbb{C}$ containing $K$ which is at most countable, such that $F_S \otimes_{F_K}\cB$ is strongly generated by $T$. Here we have identified $T$ with the set $\{1 \otimes a_i|\ i\in I\} \subset F_S \otimes_{F_K} \cB$. \end{lemma}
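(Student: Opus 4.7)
The plan is to work weight-by-weight and reduce the claim to a finite-dimensional linear algebra problem at each conformal weight. Fix $m \geq 0$ and set $r = r_m := \dim_{\mathbb{C}} \cB_{\infty}[m]$, which is also the $F_K$-rank of $\cB[m]$. Since $U$ strongly generates $\cB_{\infty}$, I can choose normally ordered monomials $\mu_1(\alpha), \ldots, \mu_r(\alpha)$ in the $\alpha_i$'s and their derivatives, of total weight $m$, that form a basis of $\cB_{\infty}[m]$. The corresponding monomials $\mu_j(a) \in \cB[m]$ in the $a_i \in T$ then satisfy $\phi(\mu_j(a)) = \mu_j(\alpha)$ by the compatibility \eqref{preservecircle} of $\phi$ with circle products.

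Next, fix any $F_K$-basis $\{b_1, \ldots, b_r\}$ of the free module $\cB[m]$. By construction of the limit, the images $\{\phi(b_1), \ldots, \phi(b_r)\}$ form a basis of $\cB_{\infty}[m]$. Writing $\mu_j(a) = \sum_k M_{jk}(\kappa)\, b_k$ with $M_{jk}(\kappa) \in F_K$ and applying $\phi$ yields $\mu_j(\alpha) = \sum_k M_{jk}(\infty)\, \phi(b_k)$, where $M_{jk}(\infty) = \lim_{\kappa \to \infty} M_{jk}(\kappa)$ exists because every element of $F_K$ is a rational function with $\deg p \leq \deg q$. Since $\{\mu_j(\alpha)\}$ is a basis of $\cB_{\infty}[m]$, the constant matrix $M(\infty)$ is invertible, so $\Delta_m(\kappa) := \det M(\kappa) \in F_K$ has a nonzero finite limit at $\kappa = \infty$. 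In particular $\Delta_m$ has only finitely many zeros in $\mathbb{C}$, and $1/\Delta_m$ again has numerator degree equal to denominator degree. Let $S_m$ denote $K$ together with this finite zero set; then $1/\Delta_m \in F_{S_m}$, so $M(\kappa)$ becomes invertible over $F_{S_m}$, and therefore $\{\mu_j(a)\}$ is an $F_{S_m}$-basis of $F_{S_m} \otimes_{F_K} \cB[m]$.

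Setting $S = \bigcup_{m \geq 0} S_m$, which is at most countable since each $S_m$ is finite, the previous step shows that every weight space of $F_S \otimes_{F_K} \cB$ is spanned over $F_S$ by normally ordered monomials in the elements of $T$ and their derivatives, which is exactly the assertion that $T$ strongly generates $F_S \otimes_{F_K} \cB$. The one subtle point, deserving care rather than ingenuity, is confirming that $1/\Delta_m$ lives in a ring of the allowed form $F_{S_m}$: this comes down to the degree condition $\deg p \leq \deg q$ in the definition of $F_K$, which is guaranteed precisely because $M(\infty)$ is invertible, forcing $\Delta_m(\infty)$ to be nonzero and finite.
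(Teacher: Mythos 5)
Your proof is correct, and it takes a cleaner, more directly linear-algebraic route than the paper. The paper first completes $U$ to a basis $U'$ of $\cB_{\infty}$, invokes the decoupling relations $\alpha_{j,d}=P_j(\alpha_i)$ together with the stated lifting property (every normally ordered relation in $\cB_{\infty}$ arises as a termwise limit of a relation in $\cB$), and then extracts from the lifted relations a linear system $\sum_k b_{jk}a_{k,d}=Q_j(a_i)$ whose matrix tends to the identity as $\kappa\to\infty$; inverting $\det[b_{jk}]$ after enlarging $K$ by the roots of its numerator gives decoupling relations in $F_{S}\otimes_{F_K}\cB$, weight by weight. You instead pick, in each weight $m$, a set of $r=\operatorname{rank}_{F_K}\cB[m]$ normally ordered monomials in $T$ whose images under $\phi$ form a basis of $\cB_{\infty}[m]$, and observe that the transition matrix $M(\kappa)$ from these monomials to an $F_K$-basis of $\cB[m]$ has invertible limit $M(\infty)$, so $\det M$ is a unit in $F_{S_m}$ for a finite enlargement $S_m$ of $K$. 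Both arguments run on the same engine --- a matrix over $F_K$ whose limit at $\kappa=\infty$ is invertible becomes invertible after adjoining finitely many points to $K$, with the degree condition $\deg p\le \deg q$ guaranteeing that the inverse determinant stays in the allowed ring --- but your version needs only that $\phi$ preserves circle products and is $F_K$-linear, and entirely bypasses the completion of $U$ to $U'$ and the relation-lifting property. Two small points worth making explicit: (i) the selection of weight-homogeneous monomials forming a basis of $\cB_{\infty}[m]$ tacitly assumes the generators $\alpha_i$ are weight-homogeneous (the paper's proof makes the same tacit assumption), and (ii) the claim that any $F_K$-basis of $\cB[m]$ maps under $\phi$ to a $\mathbb{C}$-basis of $\cB_{\infty}[m]$ deserves a sentence --- it holds because a unit of $F_K$ necessarily has numerator and denominator of equal degree and hence a nonzero limit at infinity, so base-change matrices over $F_K$ remain invertible in the limit; alternatively one can simply take the $b_k$ to be the weight-$m$ elements of the defining basis $\{a_i\}$.
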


\begin{proof} Without loss of generality, we may assume that $U$ is linearly independent. Complete $U$ to a basis $U'$ for $\cB_{\infty}$ containing finitely many elements in each weight, and let $T'$ be the corresponding basis of $\cB$ over $F_K$. Let $d$ be the first weight such that $U'$ contains elements which do not lie in $U$, and let $\{\alpha_{1,d},\dots, \alpha_{r,d}\}$ be the set of elements of $U'\setminus U$ of weight $d$. Since $U$ strongly generates $\cB_{\infty}$, we have decoupling relations in $\cB_{\infty}$ of the form $$\alpha_{j,d} = P_j(\alpha_i), \qquad j=1,\dots, r.$$ Here $P$ is a normally ordered polynomial in the generators $\{\alpha_i|\ i\in I\}$ and their derivatives. Let $a_{j,d}$ be the corresponding elements of $T'$. There exist relations $$a_{j,d} = \tilde{P}_j(a_i, a_{1,d},\dots, \widehat{a_{j,d}},\dots, a_{r,d}), \qquad j=1,\dots, r,$$ which converge termwise to $P_j(\alpha_i)$. Here $\tilde{P}_j$ does not depend on $a_{j,d}$ but may depend on $a_{k,d}$ for $k\neq j$. Since each $a_{k,d}$ has weight $d$ and $\tilde{P}_j$ is homogeneous of weight $d$, $\tilde{P}_j$ depends linearly on $a_{k,d}$. We can therefore rewrite these relations in the form $$\sum_{k=1}^r b_{jk} a_{k,d} = Q_j(a_i), \qquad b_{jk} \in F_K,$$ where $b_{jj} = 1$, $\lim_{\kappa\ra \infty} b_{jk} = 0$ for $j\neq k$, and $$Q_j(a_i) = \tilde{P}_j(a_i, a_{1,d},\dots, \widehat{a_{j,d}},\dots, a_{r,d})  + \sum_{k=1}^{j-1} b_{jk} a_{k,d}  +\sum_{k=j+1}^r b_{jk} a_{k,d}.$$Clearly $\lim_{\kappa \ra \infty} \det[b_{jk}] = 1$, so this matrix is invertible over the field of rational functions in $\kappa$. Let $S_d$ denote the union of $K$ with the set of distinct roots of the numerator of $\det[b_{jk}]$ regarded as a rational function of $\kappa$. We can solve this linear system over the ring $F_{S_d}$, so in $F_{S_d}\otimes_{F_K} \cB$ we obtain decoupling relations $$a_{j,d} = \tilde{Q}_j(a_i), \qquad j=1,\dots, r.$$ For each weight $d+1,d+2\dots$ we repeat this procedure, obtaining sets $$S_d\subset S_{d+1} \subset S_{d+2} \subset \cdots$$ and decoupling relations $$a = P(a_i)$$ in $F_{S_{d+i}}\otimes_{\mathbb{C}} \cB$, for each $a \in T'\setminus T$ of weight $d+i$. Letting $S  = \bigcup_{i\geq 0} S_{d+i}$, we obtain decoupling relations in $F_S \otimes_{F_K} \cB$ expressing each $a \in T'\setminus T$ as a normally ordered polynomial in $a_1,\dots,a_s$ and their derivatives. \end{proof}

\begin{cor} \label{passagecor} For $k\notin S$, the vertex algebra $\cB_k = \cB/ (\kappa -k)$ is strongly generated by the image of $T$ under the map $\cB \ra \cB_k$.
\end{cor}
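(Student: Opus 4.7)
The plan is to deduce this corollary directly from Lemma \ref{passage} by specializing the decoupling relations at $\kappa = k$. The previous lemma produced a set $S \supset K$ (at most countable) and, for every weight-homogeneous basis element $a \in \cB$ of weight $d$ not already in $T$, a decoupling relation in $F_S \otimes_{F_K} \cB$ of the form
\begin{equation}
1 \otimes a \;=\; P_a(1 \otimes a_i),
\end{equation}
where $P_a$ is a normally ordered polynomial in the elements of $T$ and their derivatives, with coefficients in $F_S$. Iterating these relations, every element of $F_S \otimes_{F_K} \cB$ is a normally ordered polynomial in $\{1 \otimes a_i\mid i \in I\}$ over $F_S$.

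Next I would construct a well-defined specialization map $\mathrm{ev}_k : F_S \otimes_{F_K} \cB \;\longrightarrow\; \cB_k$. The crucial point is that because $k \notin S$, no denominator of any element of $F_S$ vanishes at $\kappa = k$, so the evaluation homomorphism $F_S \to \mathbb{C}$, $\kappa \mapsto k$, is well-defined and factors as $F_S \to F_S/(\kappa-k) \cong \mathbb{C}$. Tensoring this with $\cB$ over $F_K$ gives
\begin{equation}
F_S \otimes_{F_K} \cB \;\longrightarrow\; \mathbb{C} \otimes_{F_K} \cB \;\cong\; \cB/(\kappa - k)\cB \;=\; \cB_k,
\end{equation}
a vertex algebra homomorphism sending $1 \otimes a_i$ to the image $\bar{a}_i$ of $a_i$ in $\cB_k$. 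Composing with the natural map $\cB \to F_S \otimes_{F_K} \cB$, $a \mapsto 1 \otimes a$, recovers the quotient map $\cB \to \cB_k$.

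Finally, I would apply $\mathrm{ev}_k$ to the decoupling relations above. Since $\mathrm{ev}_k$ is a vertex algebra homomorphism and its restriction to $F_S$ is well-defined at $\kappa = k$, the relation $1 \otimes a = P_a(1 \otimes a_i)$ becomes $\bar{a} = \bar{P}_a(\bar{a}_i)$ in $\cB_k$, where $\bar{P}_a$ is obtained from $P_a$ by evaluating each coefficient at $\kappa = k$. Because this produces a decoupling relation in $\cB_k$ for every $a$ in a basis of $\cB$ outside $T$, the image of $T$ strongly generates $\cB_k$. The only thing to check with any care is the well-definedness of the specialization map at $\kappa = k$, and this is precisely what the condition $k \notin S$ guarantees; there is no other obstacle.
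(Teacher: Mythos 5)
Your argument is correct and is precisely the specialization argument the paper has in mind: the corollary is stated without proof as an immediate consequence of Lemma \ref{passage}, the point being exactly that for $k\notin S$ every coefficient in $F_S$ can be evaluated at $\kappa=k$, so the decoupling relations descend to $\cB_k$. Nothing further is needed.
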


Next we consider a deformation of $\cF^{GL_n}\cong \cV_n(\widehat{\cS\cD})$. Note that $\cF$ carries an action of $V_0(\gg\gl(n))$ which is just the sum of the action of $V_1(\gg\gl(n))$ on the $bc$-system $\cE$, and the action of $V_{-1}(\gg\gl(n))$ on the $\beta\gamma$-system $\cS$. Fix an orthonormal basis $\{\xi_i\}$ for $\gg\gl(n)$ relative to the normalized Killing form. We have the diagonal homomorphism \begin{equation} \label{diagonalhomomorphism} V_{k} (\gg\gl(n)) \ra V_k (\gg\gl(n)) \otimes \cF,\qquad \bar{X} ^{\xi_i} \mapsto \tilde{X}^{\xi_i}\otimes 1 + 1\otimes X^{\xi_i}.\end{equation} Here $\bar{X}^{\xi_i}$ and $\tilde{X}^{\xi_i}$ are the generators of the first and second copies of $V_{k}(\gg\gl(n))$, respectively, and $X^{\xi_i}$ are the generators of the image of $V_0(\gg\gl(n))$ inside $\cF$. Define $$\cB_{n,k} = \text{Com}(V_{k}(\gg\gl(n)), V_k(\gg\gl(n)) \otimes \cF).$$ There is a linear map $\cB_{n,k} \ra \cF^{GL_n}$ defined as follows. Each element $\omega \in \cB_{n,k}$  of weight $d$ can be written uniquely in the form $\omega = \sum_{r=0}^d \omega_r$ where $\omega_r$ lies in the space \begin{equation} \label{shapemon}(V_k(\gg\gl(n)) \otimes \cF)^{(r)}\end{equation} spanned by terms of the form $\alpha \otimes \nu$ where $\alpha \in V_k(\gg\gl(n))$ has weight $r$. Clearly $\omega_0 \in \cF^{GL_n}$ so we have a well-defined linear map \begin{equation} \label{limitmap} \phi_k: \cB_{n,k} \ra \cF^{GL_n}, \qquad \omega \mapsto \omega_0.\end{equation} Note that $\phi_k$ is not a vertex algebra homomorphism for any $k$.

\begin{lemma} \label{deformationofvgi} $\phi_k$ is injective whenever $V_k(\gg\gl(n))$ is a simple vertex algebra. \end{lemma}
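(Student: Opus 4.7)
The plan is to suppose $\omega \in \cB_{n,k}$ satisfies $\phi_k(\omega) = 0$ but $\omega \neq 0$, and derive a contradiction. Write $\omega = \sum_{r \geq 0} \omega_r$ with $\omega_r \in (V_k(\gg\gl(n)) \otimes \cF)^{(r)}$; by hypothesis $\omega_0 = 0$, so I may let $d \geq 1$ be the smallest index with $\omega_d \neq 0$ and expand $\omega_d = \sum_j a_j \otimes b_j$ with $\{b_j\} \subset \cF$ linearly independent and $a_j \in V_k(\gg\gl(n))$ homogeneous of conformal weight $d$.

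The commutant condition is $\bar X^\xi \circ_m \omega = 0$ for all $\xi \in \gg\gl(n)$ and $m \geq 0$, where $\bar X^\xi = \tilde X^\xi \otimes 1 + 1 \otimes X^\xi$ under the diagonal homomorphism \eqref{diagonalhomomorphism}. Because $\tilde X^\xi$ is a conformal weight one field on $V_k(\gg\gl(n))$, the operator $(\tilde X^\xi \otimes 1) \circ_m$ lowers the first-factor weight by $m$, while $(1 \otimes X^\xi) \circ_m$ acts only on $\cF$ and therefore preserves it. Extracting the first-factor weight $d - m$ component of $\bar X^\xi \circ_m \omega = 0$ for $m \geq 1$ yields
\[
(\tilde X^\xi \otimes 1) \circ_m \omega_d + (1 \otimes X^\xi) \circ_m \omega_{d-m} = 0,
\]
and the second summand vanishes by the minimality of $d$. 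Expanding the first summand as $\sum_j (\tilde X^\xi \circ_m a_j) \otimes b_j$ and using the linear independence of $\{b_j\}$, I obtain
\[
\tilde X^\xi \circ_m a_j = 0 \qquad \text{for all } \xi \in \gg\gl(n),\ j, \text{ and } m \geq 1.
\]

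This says that each $a_j$ is annihilated by the positive part $\gg\gl(n) \otimes t\mathbb{C}[t]$ of the affine Lie algebra $\widehat{\gg\gl(n)}$, i.e., is a singular vector in the universal vacuum module $V_k(\gg\gl(n))$. Simplicity of $V_k(\gg\gl(n))$ is equivalent to the statement that the only such singular vectors lie in $\mathbb{C}\cdot 1$, since any singular vector of positive weight would generate a nonzero proper vertex algebra ideal. Because $a_j$ has weight $d \geq 1$, it follows that $a_j = 0$ for every $j$, contradicting $\omega_d \neq 0$. The main thing to verify carefully is the weight bookkeeping that cleanly isolates the contributions of $(\tilde X^\xi \otimes 1) \circ_m$ and $(1 \otimes X^\xi) \circ_m$ to each first-factor weight slot of $\bar X^\xi \circ_m \omega$, so that the key constraint on the leading component $\omega_d$ decouples from the remaining $\omega_r$; once this is in place, the remaining steps are essentially forced.
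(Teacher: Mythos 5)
Your proof is correct and follows essentially the same route as the paper: decompose $\omega$ by the conformal weight of the $V_k(\gg\gl(n))$-factor, use the commutant condition together with the minimality of $d$ to show the leading coefficients $a_j$ are annihilated by all positive modes $\tilde X^{\xi}(m)$, and conclude from simplicity that no such nonzero vector of positive weight exists. The extra weight bookkeeping you spell out is exactly the implicit content of the paper's one-line assertion that "each of the above $\alpha$'s must be annihilated by $\tilde X^{\xi_i}(m)$ for all $m>0$."
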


\begin{proof}
Assume that $V_k(\gg\gl(n))$ is simple. Fix $\omega \in \cB_{n,k}$, and suppose that $\phi_k(\omega) = 0$. If $\omega \neq 0$, there is a minimal integer $r>0$ such that $\omega_r \neq 0$. We may express $\omega_r$ as a linear combination of terms of the form $\alpha \otimes \nu$ for which the $\nu$'s are linearly independent. Since $\omega$ lies in the commutant $\cB_{n,k}$, it follows that each of the above $\alpha$'s must be annihilated by $\tilde{X}^{\xi_i}(m)$ for all $m>0$. Since $\text{wt}(\alpha) = r>0$, this implies that $\alpha$ generates a nontrivial ideal in $V_k(\gg\gl(n))$, which is a contradiction. \end{proof}

Let $K\subset \mathbb{C}$ be the set of values of $k$ such that $V_k(\gg\gl(n))$ is {\it not} simple. This set is countable and is described explicitly by Theorem 0.2.1 in the paper \cite{GK} by Kac and Gorelik. As above, there exists a vertex algebra $\cB_n$ with coefficients in $F_K$ with the property that $\cB_n / (\kappa - k) = \cB_{n,k}$ for all $k\notin K$. The generators of $\cB_n$ are the same as the generators of $\cB_{n,k}$, where $k$ has been replaced by the formal variable $\kappa$, and the OPE relations are the same as well. The maps $\phi_k$ above give rise to a linear map $\phi_{\kappa}: \cB_n \ra F_K \otimes_{\mathbb{C}} \cF^{GL_n}$, which is not a vertex algebra homomorphism.

\begin{lemma} The map $\phi_{\kappa}: \cB_n \ra F_K \otimes_{\mathbb{C}} \cF^{GL_n}$ is a linear isomorphism, and the induced map $\phi =  \lim_{\kappa \ra \infty} \phi_{\kappa}$ is a vertex algebra isomorphism from $\cB_{n,\infty} \ra \cF^{GL_n}$. \end{lemma}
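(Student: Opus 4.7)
My plan has three parts: $\phi_\kappa$ is $F_K$-linearly injective (via specialization to the simple-level case), $\phi_\kappa$ is surjective (via an explicit recursion in the $V_\kappa$-weight), and the limit $\phi$ preserves circle products (because higher-$V_\kappa$-weight pieces vanish as $\kappa\to\infty$).

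For injectivity, for each $k\notin K$ the map $\phi_k$ is injective by Lemma \ref{deformationofvgi}. Any $\omega\in\cB_n$ with $\phi_\kappa(\omega)=0$ then specializes to zero in $\cB_{n,k}$ for every $k\notin K$. Writing $\omega$ in an $F_K$-basis of $\cB_n$ whose specializations remain linearly independent for cofinitely many $k$, the $F_K$-coefficients of $\omega$ vanish on a cofinite set and are thus identically zero.

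For surjectivity, given $\nu\in\cF^{GL_n}$, I seek $\omega=\nu+\omega_1+\omega_2+\cdots\in\cB_n$ with $\omega_r\in V_\kappa(\gg\gl(n))[r]\otimes\cF$. The commutant condition $\bar{X}^{\xi_i}_m\omega=0$, projected onto $V_\kappa$-weight $s$, gives the recursive system
\[
\tilde{X}^{\xi_i}_m\omega_{s+m}+X^{\xi_i}_m\omega_s=0,\qquad s,m\ge 0,\quad i=1,\dots,\dim\gg\gl(n).
\]
The case $s=m=0$ is immediate from $\gg\gl(n)$-invariance of $\nu$. For $m\ge 1$, the central-term identity $\tilde{X}^{\xi_i}_m\tilde{X}^{\xi_j}_{-m}|0\rangle=m\kappa\delta_{ij}|0\rangle$ provides a nondegenerate pairing, allowing $\omega_m$ to be solved with leading part $-\frac{1}{m\kappa}\sum_i\tilde{X}^{\xi_i}_{-m}|0\rangle\otimes X^{\xi_i}_m\nu$. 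Iterating over $s\ge 1$ yields the remaining $\omega_r$, each inversion introducing a factor of $1/\kappa$. Since $V_0(\gg\gl(n))$ is not simple, $0\in K$, and hence $1/\kappa\in F_K$, so $\omega\in\cB_n$ with $\phi_\kappa(\omega)=\nu$.

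For the limit, choose an $F_K$-basis $\{a_i\}$ of $\cB_n$ with $\phi_\kappa(a_i)\in\cF^{GL_n}$ $\kappa$-constant (possible once $\phi_\kappa$ is bijective). By the recursion, $(a_i)_r=O(\kappa^{-1})$ for $r\ge 1$, so $\lim_{\kappa\to\infty}(a_i)_r=0$. In a circle product $a_i\circ_n a_j$, the $V_\kappa$-weight-$0$ component equals $(a_i)_0\circ_n(a_j)_0$ (computed in $\cF$) plus contractions of higher-$V_\kappa$-weight pieces; each central contraction contributes one power of $\kappa$, which cannot offset the $O(\kappa^{-1})$ decay of each high-weight factor. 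These corrections are thus $O(\kappa^{-1})$ and vanish in the limit, giving $\phi(\alpha_i\circ_n\alpha_j)=\phi(\alpha_i)\circ_n\phi(\alpha_j)$; combined with bijectivity, $\phi$ is a VA isomorphism. The main obstacle is verifying that the surjectivity recursion closes consistently across the overdetermined $(i,m,s)$-indexed system: compatibility rests on the total antisymmetry of the $\gg\gl(n)$ structure constants in the Killing normalization together with the Jacobi identity, ensuring obstructions cancel at every order, after which counting powers of $\kappa$ settles both the linear and VA isomorphism claims.
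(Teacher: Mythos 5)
Your injectivity argument (specialize at $k\notin K$ and invoke Lemma \ref{deformationofvgi}, then use that a rational function vanishing at infinitely many points is zero) is fine and matches what the paper leaves implicit. The genuine gap is in your surjectivity step, and you flag it yourself without closing it: the system $\tilde{X}^{\xi_i}\circ_m\omega_{s+m}+X^{\xi_i}\circ_m\omega_s=0$ is overdetermined, since for fixed $r$ the single element $\omega_r$ must simultaneously satisfy the equations for every decomposition $r=s+m$ and every $i$, as well as the diagonal zero-mode invariance at $m=0$. You assert that the obstructions cancel ``by total antisymmetry of the structure constants in the Killing normalization together with the Jacobi identity,'' but that is not a proof, and it is not even clearly the right mechanism: $\gg\gl(n)$ is not semisimple, its Killing form is degenerate on the center, and the solvability of such a system is a statement about $V_\kappa(\gg\gl(n))$ as a module over its annihilation modes, not a one-line consequence of invariance of a form. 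Since this compatibility is exactly what produces enough commutant elements, the crux of surjectivity is missing. A secondary imprecision: in the limit step, a component of $\omega_r$ built from $p$ modes $\tilde X^{\xi_{i_1}}(-m_1)\cdots\tilde X^{\xi_{i_p}}(-m_p)|0\ket$ can contribute $\kappa^{p}$ from central contractions, so $O(\kappa^{-1})$ decay of each high-weight factor does not by itself kill the correction terms; you need the finer estimate that the $p$-mode components decay like $\kappa^{-p}$ (your recursion would in fact produce this, but you do not say so).

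The paper sidesteps the overdetermined system entirely. By Lemma \ref{weakfg}, $\cF^{GL_n}$ is generated as a vertex algebra (not merely strongly) by the five fields $j^{0,0},j^{1,0},j^{\pm,0},j^{0,1}$, so it suffices to exhibit five explicit commutant elements $t^{a,l}=1\otimes j^{a,l}-\frac1k\sum_i\tilde X^{\xi_i}\otimes(X^{\xi_i}\circ_1 j^{a,l})$ lifting them; membership in $\cB_{n,k}$ is then a finite direct check rather than an inductive scheme, and surjectivity together with the homomorphism property of $\phi=\lim_{\kappa\to\infty}\phi_\kappa$ follows because circle-product words in the $t$'s project to the corresponding words in the $j$'s up to $O(1/\kappa)$ corrections. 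If you want to salvage your approach, the cleanest repair is to replace the general recursion by these five explicit lifts and then run your power-counting argument on words in them.
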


\begin{proof} Recall that by Lemma \ref{weakfg}, $\cF^{GL_n}$ is generated by $\{j^{0,0}, j^{1,0}, j^{+,0}, j^{-,0}, j^{0,1}\}$. To prove the surjectivity of $\phi_{\kappa}$, it is enough to find elements $\{t^{0,0}, t^{1,0}, t^{+,0}, t^{-,0}, t^{0,1}\}$ in $\cB_{n,k}$ such that $\phi_k(t^{a,k}) = j^{a,k}$ and $\lim_{k\ra \infty} t^{a,k} = j^{a,k}$. This is a straightforward calculation, where \begin{equation} \begin{split} \label{commcori} & t^{0,0} = 1\otimes j^{0,0} -\frac{1}{k}  \sum_{i} \tilde{X}^{\xi_i}  \otimes (X^{\xi_i} \circ_1 j^{0,0}),\qquad t^{1,0} = 1\otimes j^{1,0} -\frac{1}{k} \sum_{i} \tilde{X}^{\xi_i}  \otimes (X^{\xi_i} \circ_1 j^{1,0}),\\ & t^{\pm,0} =1\otimes  j^{\pm,0}, \qquad t^{0,1} = 1\otimes j^{0,1} - \frac{1}{k} \sum_{i} \tilde{X}^{\xi_i}  \otimes (X^{\xi_i} \circ_1 j^{0,1}).\end{split} \end{equation}
It is clear from \eqref{commcori} that $\phi$ is a vertex algebra isomorphism. \end{proof}

\begin{thm} Let $U$ be the strong generating set $\{ j^{0,l}, j^{1,l}, j^{+,l}, j^{-,l}|\ l=0,1,\dots, n-1\}$ for $\cV_n(\widehat{\cS\cD}) \cong \cF^{GL_n}$ given by Theorem \ref{sfgvsg}, and let $T =\{ t^{0,l}, t^{1,l}, t^{+,l}, t^{-,l}|\ l=0,1,\dots, n-1\}$ be the corresponding subset of $\cB_{n,k}$ with $\phi_k(t^{a,l}) = j^{a,l}$. For generic values of $k$, $T$ is a minimal strong generating set for $\cB_{n,k}$. \end{thm}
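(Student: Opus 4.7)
The plan is to combine Lemma \ref{passage} and Corollary \ref{passagecor} with the minimality of $U$ established in Theorem \ref{sfgvsg}, and then to descend from the $F_S$-level to generic specializations by a linear-algebra rank argument.

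First, I would apply Lemma \ref{passage} to the identification $\cV_n(\widehat{\cS\cD}) \cong \cB_{n,\infty}$, the strong generating set $U$, and its lift $T$ satisfying $\phi(t^{a,l}) = j^{a,l}$. This produces a countable set $S \supset K$ such that $F_S \otimes_{F_K} \cB_n$ is strongly generated by $T$; Corollary \ref{passagecor} then yields that for every $k \notin S$, the image of $T$ in $\cB_{n,k}$ strongly generates $\cB_{n,k}$.

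Next I would prove that $T$ is minimal in $F_S \otimes_{F_K} \cB_n$ by contradiction. Suppose some $t^{a_0,l_0} \in T$ lies in the $F_S$-span of iterated Wick products of $T \setminus \{t^{a_0,l_0}\}$ and their derivatives. Since every element of $F_S$ is a rational function in $\kappa$ with a finite limit at $\kappa = \infty$, the limit map $\phi \colon \cB_n \to \cB_{n,\infty}$ extends coefficientwise to $F_S \otimes_{F_K} \cB_n$ and continues to preserve circle products by \eqref{preservecircle}. Applying this extension to the hypothesized relation converts it into a decoupling relation expressing $j^{a_0,l_0}$ as a normally ordered polynomial in $U \setminus \{j^{a_0,l_0}\}$ and derivatives inside $\cV_n(\widehat{\cS\cD})$, contradicting the minimality assertion of Theorem \ref{sfgvsg}.

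Finally, to transfer minimality from $F_S \otimes_{F_K} \cB_n$ to $\cB_{n,k}$ for generic $k$, I would run a rank argument. Fix a generator $t^{a_0,l_0}$ of weight $m$ and a basis of $\cB_n[m]$ over $F_K$; expressing $t^{a_0,l_0}$ together with all iterated Wick products of $T \setminus \{t^{a_0,l_0}\}$ of weight $m$ in this basis gives a matrix of elements of $F_K$. The condition that $t^{a_0,l_0}$ lies in the span of the others is equivalent to the vanishing of certain $(r+1)\times(r+1)$ minors, where $r$ is the rank of the matrix of the remaining columns. Since this condition fails over $F_S$ by the previous step, these minors are not identically zero as rational functions of $\kappa$, so their zero loci contribute only countably many exceptional values. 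Taking the union of $S$ with these exceptional sets over the $4n$ choices of generator and finitely many weights (only those in which generators actually appear), we obtain a countable set $S' \supset S$ such that for every $k \notin S'$, no generator in $T$ lies in the span of iterated Wick products of the others in $\cB_{n,k}$. This gives minimality.

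The main obstacle is the rank-drop step: one must verify that a nontrivial dependence over $\mathbb{C}$ in $\cB_{n,k}$ would force a nontrivial dependence over $F_S$ in $\cB_n$, which is the content of the minor-vanishing argument. Everything else is a direct application of the deformation infrastructure already in place.
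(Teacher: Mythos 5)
Your proposal is correct and follows essentially the same route as the paper: Lemma \ref{passage} and Corollary \ref{passagecor} give strong generation for generic $k$, and minimality is obtained by passing to the limit $\kappa \to \infty$ and invoking the absence of relations of weight $\leq n+1/2$ in $\cF^{GL_n}$ (the paper cites Theorem \ref{weylfft} directly, you cite the minimality clause of Theorem \ref{sfgvsg}, which rests on the same fact). The one place you go beyond the paper is the final minor-vanishing rank argument descending from minimality over $F_S$ to minimality at individual generic $k$; the paper's proof simply says that a decoupling relation would survive the limit $k \to \infty$, tacitly treating the relation as defined over the function field, whereas your argument makes explicit why a dependence at a single specialization $k_0$ can only occur on the zero locus of finitely many not-identically-vanishing minors. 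That extra step is sound and tightens a point the paper leaves informal, but it does not change the structure of the proof.
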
 

\begin{proof} By Lemma \ref{passage} and Corollary \ref{passagecor}, $T$ strongly generates $\cB_{n,k}$ for generic $k$. If $T$ were not minimal, we would have a decoupling relation expressing $t^{a,l}$ as a normally ordered polynomial in the remaining elements of $T$ and their derivatives, for some $l\leq n-1$. This relation has weight at most $n+1/2$, and taking the limit as $k\ra \infty$ would give us a nontrivial relation in $\cV_n(\widehat{\cS\cD})$ of the same weight. But this is impossible by Theorem \ref {weylfft}, which implies that there are no such relations in $\cF^{GL_n}$.
\end{proof}

Finally, note that if $\{\nu_i|\ i\in I\}$ generates $\cF^{GL_n}$, not necessarily strongly, the corresponding subset $\{\omega_i |\ i\in I\}$ generates $\cB_{n,k}$ for generic values of $k$. This is immediate from the fact that if $\{\nu_i|\ i\in I\}$ generates $\cF^{GL_n}$, the set $$\{\nu_{i_1} \circ_{j_1}( \cdots (\nu_{i_r-1} \circ_{j_{r-1}} \nu_{i_r})\cdots )|\ i_1,\dots, i_r \in I,\ j_1,\dots, j_{r-1} \geq 0\}$$ strongly generates $\cF^{GL_n}$.

\begin{cor} \label{genbk} The sets $\{t^{0,0}, t^{1,0}, t^{+,0}, t^{-,0}, t^{0,1}\}$ and $\{t^{0,0}, t^{1,0}, t^{+,0}, t^{-,0}, t^{+,1}, t ^{-,1}\}$ both generate $\cB_{n,k}$ for generic values of $k$.
\end{cor}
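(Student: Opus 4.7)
The plan is to reduce the corollary to the general deformation principle stated just before it, namely that any (not necessarily strong) generating set of $\cF^{GL_n}$ lifts to a generating set of $\cB_{n,k}$ for generic $k$. This principle is itself an easy application of Lemma~\ref{passage} and Corollary~\ref{passagecor}, once one observes that the collection of iterated circle products of any generating set is automatically a strong generating set.

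First I would invoke Lemma~\ref{weakfg} to note that the sets $\{J^{0,0}, J^{1,0}, J^{+,0}, J^{-,0}, J^{0,1}\}$ and $\{J^{0,0}, J^{1,0}, J^{+,0}, J^{-,0}, J^{+,1}, J^{-,1}\}$ generate $\cM_n(\widehat{\cS\cD})$ as a vertex algebra; applying the surjection $\pi_n$, the corresponding lowercase sets generate the quotient $\cV_n(\widehat{\cS\cD}) \cong \cF^{GL_n}$. Passing to iterated circle products then gives strong generating sets $U_1, U_2$ of $\cF^{GL_n}$. Under the vertex algebra isomorphism $\phi = \lim_{\kappa\to\infty}\phi_\kappa : \cB_{n,\infty} \to \cF^{GL_n}$, the iterated circle products of the lifted elements $t^{a,l}\in\cB_n$ map to $U_1$ and $U_2$ respectively, because $\phi$ preserves all $\circ_n$ operations. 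Hence these iterated products strongly generate $\cB_{n,\infty}$.

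Now I would apply Lemma~\ref{passage} with the sets $U_1$ and $U_2$ and their lifts in $\cB_n$: there exists a countable set $S \subset \mathbb{C}$ containing $K$ such that for $k \notin S$, Corollary~\ref{passagecor} says these iterated products strongly generate $\cB_{n,k}$. Since every element appearing in those iterated products is obtained by circle-product operations applied to $\{t^{0,0}, t^{1,0}, t^{+,0}, t^{-,0}, t^{0,1}\}$ (respectively $\{t^{0,0}, t^{1,0}, t^{+,0}, t^{-,0}, t^{+,1}, t^{-,1}\}$), these smaller sets themselves generate $\cB_{n,k}$ as a vertex algebra for every $k\notin S$, which is exactly the genericity statement. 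There is no substantive obstacle here; the entire argument is a bookkeeping exercise that reuses Lemma~\ref{weakfg} in place of the strong generating set furnished by Theorem~\ref{sfgvsg}.
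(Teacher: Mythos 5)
Your proposal is correct and follows essentially the same route as the paper: the paper likewise deduces the corollary from Lemma \ref{weakfg} together with the observation that a (not necessarily strong) generating set of $\cF^{GL_n}$ yields, via its iterated nonnegative circle products, a strong generating set to which Lemma \ref{passage} and Corollary \ref{passagecor} apply. The only cosmetic difference is that you spell out the role of the isomorphism $\phi$ preserving circle products, which the paper leaves implicit.
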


\section{$\cW$-algebras of $\AKMSA{gl}{n}{n}$}  
 
As mentioned in the introduction, $\cW$-algebras can often be realized in various ways. In this section, we find a family of $\cW$-algebras $\cW_{n,k}$ associated to a certain simple and purely odd root system of $\SLSA{gl}{n}{n}$. We will see that $\cV_{n}(\widehat{\cS\cD}) = \lim_{k\ra \infty} \cW_{n,k}$. In the case $n=2$ we write down all nontrivial OPE relations in $\cW_{2,k}$ explicitly. 

\begin{defn}\label{def:glnn}
Let $X=\{E_{ij} | 1\leq i,j\leq 2n\}$ be the basis of a $\mathbb Z_2$-graded $\mathbb C$-vector space with gradation given by
\begin{equation}
|E_{ij}|\ = \ \begin{cases}
0 \quad &\text{for}\quad 1\leq i,j\leq n \quad \text{or}\quad n+1\leq i,j\leq 2n\\
1 \quad &\text{for}\quad 1\leq i\leq n ,\ \  n+1\leq j\leq 2n \quad \text{or}\quad 1\leq j\leq n ,\ \  n+1\leq i\leq 2n.
\end{cases}
\end{equation}
Then 
\begin{equation}
[E_{ij},E_{kl}]\ = \ \delta_{j,k}E_{il}-(-1)^{|E_{ij}||E_{kl}|}\delta_{i,l}E_{kj}
\end{equation}
provides the $\mathbb C$-span of $X$ with the structure of a Lie superalgebra, namely $\SLSA{gl}{n}{n}$.
A consistent, graded symmetric, invariant and nondegenerate bilinear form is given by
\begin{equation}
B(E_{ij},E_{kl})\ =\ \delta_{j,k}\delta_{i,l}\times \begin{cases} 1 \quad &\text{for}\quad 1\leq i\leq n\\ 
 -1 \quad &\text{for}\quad n+1\leq i\leq 2n.
 \end{cases}
\end{equation} 
\end{defn}
A Cartan subalgebra has a basis given by the $E_{ii}$. A root system is given by $\alpha_{ij}$ for $1\leq i\neq j\leq 2n$ with
$\alpha_{ij}(E_{kk})= \delta_{ik}-\delta_{jk}$. 
The parity of a root is defined as $|\alpha_{ij}|=|E_{ij}|$. The distinguished system of positive simple roots is given by $\alpha_{i, i+1}$, where only $\alpha_{n,n+1}$
is an odd root. We are interested in a system of positive simple and purely odd roots. Such a system is given by $\{\alpha_i=\alpha_{i,n+i},\beta_i=\alpha_{n+i,i+1}\}$.
Define $2n$ bosonic fields $J^\pm_i$, $1\leq i\leq n$, and $2n$ fermionic fields $\psi^\pm_i$, with operator products
\begin{equation}
J_i^\pm(z)J_j^\pm(w) \ \sim \ \pm \frac{k\delta_{i,j}}{(z-w)^2}, \qquad \qquad \psi_i^\pm(z)\psi_j^\pm(w)\ \sim \ \pm\frac{k\delta_{i,j}}{(z-w)},
\end{equation}
and all other operator products regular. The complex number $k$ will be called the level. 
Let $J_i^\pm(z)= \sum_{n\in \mathbb{Z}} J^\pm_i(n) z^{-n-1}$ be the expansion as a formal Laurent series of the field $J_i^\pm(z)$. The coefficients satisfy the commutation relations of a rank $2n$ Heisenberg algebra $[J^\pm_i(n),J^\pm_j(m)] = n \delta_{i.j}\delta_{n+m,0}k$
of level $k$. 
Let 
\[
 \phi^\pm_i(z) = q^\pm_i+ J^\pm_i(0) \ln z - \sum_{n\neq 0} \frac{J^\pm_i(n)}{n} x^{-n},
\]
 where $q^\pm_i$ satisfies $[J^\pm_j(n),q^\pm_i] = \pm\delta_{i,j}\delta_{n,0}k$ and we have $\partial \phi^\pm_i(z) = J^\pm_i(z)$,
so that 
\begin{equation}
\phi_i^\pm(z)\phi_j^\pm(w) \ \sim \ \pm k\delta_{i,j}\ln(z-w)\,.
\end{equation}
Let $\alpha=(\alpha^+_1,\dots,\alpha^+_n,\alpha^-_1,\dots,\alpha^-_n)\in\mathbb C^{2n}$, and consider the highest-weight module
$\mathcal H_\alpha$ of the Heisenberg vertex algebra of weight $\alpha$ with highest-weight vector $v_\alpha$
satisfying 
\[
 J^\pm_i(n)v_\alpha= \alpha^\pm_i\delta_{n,0}v_\alpha,\qquad n\geq 0.
\]
Denote by $e^+_i\in\mathbb C^{2n}$ (resp. $e^-_i\in\mathbb C^{2n}$) the vector 
with all entries zero except the one at the $i^{\text{th}}$ (resp. $(n+i)^{\text{th}}$) position being one. 
Then for $\eta\in\mathbb C$, the operator $e^{\eta q^\pm_i}$ acts as $e^{\eta q^\pm_i}(v_\alpha)=v_{\alpha+k\eta e^\pm_i}$, so it
maps $\mathcal H_\alpha\rightarrow \mathcal H_{\alpha+k\eta e^\pm_i}$. 
Define the field 
\[
 e^{\eta \phi^\pm_i(z)} = e^{\eta q^\pm_i} z^{\eta \alpha} \mbox{exp} \bigg(\eta \sum_{n>0} J^\pm_i(-n) \frac{z^n}{n}\bigg) \mbox{exp}\bigg(\eta\sum_{n<0} J^\pm_i(-n) \frac{z^n}{n}\bigg).
\]
 The $e^{\eta \phi^\pm_i(z)}$ satisfy the operator products
$$J^\pm_j(z)  e^{\eta \phi^\pm_i(w)} \sim \pm k\eta\delta_{i,j}  e^{\eta \phi^\pm_i(w)}(z-w)^{-1} + \frac{1}{\eta} \partial  e^{\eta \phi^\pm_i(w)},$$
$$  e^{\eta \phi^\pm_i(z)}  e^{\nu \phi^\mp_j(w)} \sim \delta_{i,j}(z-w)^{\pm k\eta\nu} :  e^{\eta \phi^\pm_i(z)} e^{\nu \phi^\pm_j(w)}:.$$
We will now introduce screening operators as the zero modes of fields of this type. For this define the even and odd Cartan subalgebra valued fields 
\begin{equation}
\phi \ = \ \sum_{i=1}^n\phi^+_iE_{ii}+\phi^-_iE_{n+i,n+i}, \qquad\qquad \psi  \ = \ \sum_{i=1}^n\psi^+_iE_{ii}+\psi^-_iE_{n+i,n+i}\, .
\end{equation}
Finally, the screening operators associated to our purely odd simple root system are as follows.
\begin{equation}
\begin{split}
Q_{\alpha_i} \ &= \ \text{Res}_z \bigl(:\alpha_i(\psi(z))e^{\alpha_i(\phi(z))}:\bigr) ,\\ 
Q_{\beta_i} \ &= \ \frac{1}{k}\text{Res}_z \bigl(:\beta_i(\psi(z))e^{\beta_i(\phi(z))}:\bigr).
\end{split}
\end{equation} 
It is convenient to change basis as follows. 
\begin{equation}
\begin{split}
Y_i(z) \ &= \ \phi^+_i(z)-\phi^-_i(z), \\
X_i(z) \ &= \ \frac{1}{2k}\bigl(\phi^+_i+\phi^-_i+\sum_{j=1}^{i-1}Y_j(z)-\sum_{j=i+1}^{n}Y_j(z)\bigr), \\
b^i(z) \ &= \ \psi^+_i(z)-\psi^-_i(z) ,\\
c^i(z) \ &= \ \frac{1}{2k}\bigl(\psi^+_i+\psi^-_i+\sum_{j=1}^{i-1}b^j(z)-\sum_{j=i+1}^{n}b^j(z)\bigr) .
\end{split}
\end{equation}
In this new basis, the nonregular OPEs are
\begin{equation}
Y_i(z)X_j(w)\ \sim \ \delta_{i,j}\ln(z-w),\qquad \qquad b^i(z)c^j(w)\ \sim \ \frac{\delta_{i,j}}{(z-w)}\,.
\end{equation}
The screening operators read in this basis
\begin{equation}
\begin{split}
Q_{\alpha_i} \ &= \ \text{Res}_z \bigl(:b^i(z)e^{Y_i(z)}:\bigr), \\ 
Q_{\beta_i} \ &= \ \text{Res}_z \bigl(:(c^i(z)-c^{i+1}(z))e^{k(X_i(z)-X_{i+1}(z))}:\bigr).
\end{split}
\end{equation} 
Let $M$ be the vertex algebra generated by $\partial Y_i(z),\partial X_i(z),b^i(z),c^i(z)$ for $i=1,\dots, n$. We have
\begin{lemma}\label{lemma:Q1}
Let
$$ N_i(z)=\partial X_i(z)-:b^i(z)c^i(z):,\qquad \ E_i(z)=\partial Y_i(z),$$
$$ \Psi_i^+(z)=b^i(z),\qquad \ \Psi^-_i(z)=\partial c^i(z)-:c^i(z)\partial Y_i(z):.$$
Then the vertex algebra generated by $N_i,E_i,\Psi^\pm_i$ is a homomorphic image of $V_1(\SLSA{gl}{1}{1})$. Moreover, this algebra is contained in the kernel of
$Q_{\alpha_i}$.
\end{lemma}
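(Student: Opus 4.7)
The plan is to prove both parts of the lemma by direct OPE computation. Since the free fields $\partial X_i, \partial Y_i, b^i, c^i$ decouple across different indices, I fix $i$ and suppress it throughout, working only with the nonregular OPEs $\partial Y(z)\partial X(w)\sim(z-w)^{-2}$ and $b(z)c(w)\sim c(z)b(w)\sim(z-w)^{-1}$.

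For the first assertion, I compute each pairwise OPE among $\{N,E,\Psi^{\pm}\}$ using super-Wick. Most OPEs are either trivially regular ($E\cdot E$, $E\cdot\Psi^{\pm}$, $\Psi^{\pm}\cdot\Psi^{\pm}$) or reduce to a single double-pole: a careful Wick expansion gives $:bc:(z):bc:(w)\sim(z-w)^{-2}$, since the two single-contraction terms combine into an expression that vanishes at $z=w$ and hence contributes nothing singular; from this $N(z)N(w)\sim(z-w)^{-2}$ and $E(z)N(w)\sim(z-w)^{-2}$. The $bc$-current $J=:bc:$ satisfies $J(z)b(w)\sim b(w)(z-w)^{-1}$ and $J(z)c(w)\sim-c(w)(z-w)^{-1}$, yielding $N(z)\Psi^{+}(w)\sim-\Psi^{+}(w)(z-w)^{-1}$ and—after two double-pole contributions from $-\partial X(z):c\partial Y:(w)$ and $-J(z)\partial c(w)$ cancel—$N(z)\Psi^{-}(w)\sim\Psi^{-}(w)(z-w)^{-1}$. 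Finally $\Psi^{+}(z)\Psi^{-}(w)=b(z)(\partial c(w)-:c\partial Y:(w))\sim(z-w)^{-2}-E(w)(z-w)^{-1}$. These match the defining OPEs of $V_1(\SLSA{gl}{1}{1})$ for the bilinear form $B$ of Definition~\ref{def:glnn}, under the identification $N\leftrightarrow E_{11}$, $E\leftrightarrow E_{11}+E_{22}$, and $\Psi^{+},\Psi^{-}$ corresponding (up to scalars) to $E_{21},E_{12}$. By the universal property of the vacuum module, the vertex subalgebra they generate is a homomorphic image of $V_1(\SLSA{gl}{1}{1})$.

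For the second assertion, I verify that $:b(z)e^{Y(z)}:F(w)$ has vanishing residue in $z$ for each generator $F$. The cases $F=E$ and $F=\Psi^{+}$ are immediate, since $\partial Y$ has no OPE with either $b$ or $Y$ and $b(z)b(w)\sim 0$. For $F=N$, the contraction $e^{Y(z)}\partial X(w)\sim -e^{Y(z)}(z-w)^{-1}$ gives $:b(z)e^{Y(z)}:\partial X(w)\sim -:b(z)e^{Y(z)}:(z-w)^{-1}$, while the contraction of $b(z)$ with $c(w)$ inside $:bc:(w)$ (with sign $(-1)^{|b||b|}=-1$ from passing $b(z)$ through $b(w)$) gives $:b(z)e^{Y(z)}::bc:(w)\sim -:b(z)e^{Y(z)}:(z-w)^{-1}$; the overall minus from $N=\partial X-:bc:$ inverts the second and the two contributions cancel. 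For $F=\Psi^{-}$, the contractions $b(z)\partial c(w)\sim(z-w)^{-2}$ and $b(z)c(w)\sim(z-w)^{-1}$ inside $:c\partial Y:(w)$ produce the singular part $e^{Y(z)}(z-w)^{-2}-e^{Y(z)}\partial Y(w)(z-w)^{-1}$. Using the Taylor expansion $\partial Y(w)=\partial Y(z)+O(z-w)$ together with $\partial Y(z)e^{Y(z)}=\partial_z e^{Y(z)}$, this sum rearranges to $-\partial_z\bigl[e^{Y(z)}(z-w)^{-1}\bigr]$, a total $z$-derivative whose residue vanishes.

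The main technical obstacle is the careful super-Wick bookkeeping: tracking signs from odd-odd contractions, especially in $N(z)\Psi^{-}(w)$ where two independent double-pole contributions must cancel, and recognizing the total-derivative structure in the $\Psi^{-}$ screening computation, which only emerges after combining two singular contributions via a Taylor expansion about $z$.
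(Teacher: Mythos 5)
Your proof is correct and follows essentially the same route as the paper: a direct super-Wick computation of the pairwise OPEs of $N,E,\Psi^{\pm}$ to exhibit the $\SLSA{gl}{1}{1}$ current algebra, followed by a residue computation of the screening current against each generator (with the only nontrivial cases being $N$ and $\Psi^{-}$, exactly as you identify). One point worth flagging: you obtain $\Psi^{+}(z)\Psi^{-}(w)\sim (z-w)^{-2}-E(w)(z-w)^{-1}$, whereas the paper's proof records the double pole as $-(z-w)^{-2}$; with the stated convention $b^i(z)c^j(w)\sim\delta_{i,j}(z-w)^{-1}$ one indeed gets $b(z)\partial c(w)\sim +(z-w)^{-2}$, so your sign appears to be the correct one, and in any case the discrepancy does not affect the conclusion since the automorphism $\Pi$ constructed earlier in the paper gives $V_{1}(\SLSA{gl}{1}{1})\cong V_{-1}(\SLSA{gl}{1}{1})$. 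A minor slip of exposition: $E(z)N(w)\sim(z-w)^{-2}$ comes from $\partial Y(z)\partial X(w)\sim(z-w)^{-2}$, not from the $:bc:(z)\,:bc:(w)$ OPE as your ``from this'' suggests, though the stated result is right.
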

\begin{proof}
The nonregular operator products of $N_i,E_i,\Psi^\pm_i$  are
\begin{equation}
\begin{split}
N_i(z)E_i(w)\ &\sim \ \frac{1}{(z-w)^2}, \\
N_i(z)N_i(w)\ &\sim \ \frac{1}{(z-w)^2} ,\\
N_i(z)\Psi^\pm_i(w)\ &\sim \ \frac{\mp \Psi^\pm(w)}{(z-w)}, \\
\Psi^+_i(z)\Psi^-_i(w)\ &\sim \ -\frac{1}{(z-w)^2}-\frac{E_i(w)}{(z-w)}. \\
\end{split}
\end{equation}
which coincides with the operator product algebra of $V_1(\SLSA{gl}{1}{1})$.
$E_i,N_i,\Psi^+_i$ are obviously in the kernel of $Q_{\alpha_i}$. The statement for $\Psi^-_i$ follows from 
$$ :b^i(z)e^{Y_i(z)}:\Psi^-_i(w)\ \sim \ \frac{e^{Y_i(w)}}{(z-w)^2}\, .$$ 
\end{proof}
\begin{lemma}\label{lemma:Q2}
We have
\begin{equation*}
\begin{split}
&E_i+E_{i+1},N_i+N_{i+1}-\frac{1}{k}E_i,\Psi^\pm_i+\Psi^\pm_{i+1} \ \in \text{Ker}_{M}(Q_{\beta_i}),\\
&:\Psi_i^+N_i:+:\Psi_{i+1}^+N_{i+1}:-\frac{1}{k}\partial \Psi^+_i\ \in \text{Ker}_{M}(Q_{\beta_i}),\\
&:N_i\Psi^-_i:+:N_{i+1}\Psi^-_{i+1}:+\frac{1}{k}\bigl(:E_{i+1}\Psi^-_i:-:E_i\Psi^-_{i+1}:\bigr)-\frac{1}{k}\partial \Psi^-_i \ \in \text{Ker}_{M}(Q_{\beta_i}).
\end{split}
\end{equation*}
\end{lemma}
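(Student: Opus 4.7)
My plan is to treat $Q_{\beta_i}$ as an odd super-derivation on $M$. Since $Q_{\beta_i}$ is the zero mode of the screening current $s_i := :(c^i - c^{i+1}) V_i:$, with $V_i := :e^{k(X_i - X_{i+1})}:$, the Borcherds identity yields the super-Leibniz rule $[Q_{\beta_i}, :AB:] = :[Q_{\beta_i}, A] B: + (-1)^{|Q_{\beta_i}||A|} :A [Q_{\beta_i}, B]:$. A direct OPE calculation shows $s_i(z) \partial X_j(w)$ and $s_i(z) c^j(w)$ are regular, while
\[
 s_i(z) \partial Y_j(w) \sim -k\,\epsilon_j\, s_i(w)/(z-w), \qquad s_i(z) b^j(w) \sim \epsilon_j\, V_i(w)/(z-w),
\]
with $\epsilon_j := \delta_{ij} - \delta_{i+1,j}$. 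Propagating these via Leibniz through the definitions of Lemma~\ref{lemma:Q1}, and using the fermion identity $:c^jc^j: = 0$, gives
\[
 [Q_{\beta_i}, E_j] = -k\epsilon_j\, s_i,\quad [Q_{\beta_i}, N_j] = -\epsilon_j :V_i c^j:,\quad [Q_{\beta_i}, \Psi^+_j] = \epsilon_j\, V_i,\quad [Q_{\beta_i}, \Psi^-_j] = \eta_j\, k :c^i c^{i+1} V_i:,
\]
where $\eta_i = +1$ and $\eta_{i+1} = -1$. The three linear assertions then follow at once by summing over $j \in \{i, i+1\}$: the $\Psi^\pm$ and $E$ sums telescope to zero because $\epsilon_i + \epsilon_{i+1} = 0 = \eta_i + \eta_{i+1}$, and for $N_i + N_{i+1} - E_i/k$ one obtains $-:V_i(c^i - c^{i+1}): + s_i = -s_i + s_i = 0$.

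For item~(4), applying Leibniz to $:\Psi^+_j N_j:$ and using $:b^j :V_i c^j:: = :V_i b^j c^j:$ (since $V_i$ has trivial OPE with $b^j$ and $c^j$), the two internal $:V_i b^j c^j:$ contributions cancel and leave $[Q_{\beta_i}, :\Psi^+_j N_j:] = \epsilon_j :V_i \partial X_j:$. Summing over $j \in \{i, i+1\}$ gives $:V_i \partial(X_i - X_{i+1}): = \partial V_i/k$, which exactly matches $[Q_{\beta_i}, \partial \Psi^+_i/k]$, so the total vanishes. Item~(5) follows by the same scheme: applying Leibniz to each of $:N_j \Psi^-_j:$, $:E_{i+1} \Psi^-_i:$, and $:E_i \Psi^-_{i+1}:$, the $:(\cdot)\Psi^-_j:$-type residues cancel pairwise via $s_i = :(c^i-c^{i+1})V_i:$, while the $:(\cdot) :c^i c^{i+1} V_i::$-type residues reorganize, using $\partial V_i = k :V_i \partial(X_i - X_{i+1}):$, into the total derivative $\partial(:c^ic^{i+1}V_i:)/k$, which cancels against $-\frac{1}{k}\partial\{Q_{\beta_i}, \Psi^-_i\}$.

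The main obstacle is the bookkeeping for item~(5): tracking sign conventions for odd-odd commutators, unfolding the triple Wick products $:N_j :c^i c^{i+1} V_i::$ and $:E_{j'} :c^i c^{i+1} V_i::$ via the rearrangement identity for $:A:BC::$, and verifying that the asymmetric pairing $E_{i+1}\Psi^-_i$ versus $E_i \Psi^-_{i+1}$ produces precisely the coefficients needed to match the $\Psi^-$-type residues generated by $:N_j \Psi^-_j:$. The key simplifiers are the fermion identities $:c^jc^j: = 0$ and $:c^ic^{i+1}: = -:c^{i+1}c^i:$, together with the exponential relation $\partial V_i = k:V_i\partial(X_i - X_{i+1}):$, which is the source of the prefactor $1/k$ appearing in the correction terms of the lemma.
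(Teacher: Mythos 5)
Your approach is sound and, at bottom, the same as the paper's: the paper disposes of the first two lines as ``obvious'' and calls the last a ``lengthy OPE computation,'' recording only the free-field expansion of $:N_i\Psi^-_i:$ as an aid; you carry out the same computation but organize it more systematically by treating $Q_{\beta_i}$ as an odd super-derivation and first tabulating the OPEs of the screening current with the free-field generators. Your basic OPEs are correct, and your verifications of the three linear statements and of the $\Psi^+$-line are complete and correct (the key points --- $\epsilon_i+\epsilon_{i+1}=0$, the cancellation $-:V_i(c^i-c^{i+1}):+s_i=0$, and $\sum_j\epsilon_j:V_i\partial X_j:=\tfrac{1}{k}\partial V_i$ --- all check out). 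One caveat on your sketch of the last line: the $\Psi^-$-type residues do \emph{not} cancel pairwise as you assert. Summing $-\epsilon_j::V_ic^j:\Psi^-_j:$ over $j\in\{i,i+1\}$ against $:s_i(\Psi^-_i+\Psi^-_{i+1}):$ leaves the cross term $::V_ic^i:\Psi^-_{i+1}:-::V_ic^{i+1}:\Psi^-_i:$, which only disappears after expanding $\Psi^-_j=\partial c^j-:c^j\partial Y_j:$ in free fields and recombining with the $:c^ic^{i+1}V_i:$-type residues (the $\partial c$ pieces assemble into part of the total derivative $\partial:c^ic^{i+1}V_i:$, and the $\partial Y$ pieces cancel against the $:E_i+E_{i+1}:$ contributions). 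This is exactly the ``lengthy OPE computation'' the paper alludes to, and it is where their displayed identity for $:N_i\Psi^-_i:$ is used; your plan would close once this recombination is done explicitly, but as written the claimed cancellation pattern for item (5) is not quite right.
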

\begin{proof}
The first two lines are obvious, while the last one is a lengthy OPE computation. One needs
\begin{equation*}
\begin{split}
 :N_i(z)\Psi_i^-(z):\ &= \ :b^i(z)\partial c^i(z)c^i(z):-:c^i(z)\partial X_i(z)\partial Y_i(z):+\\
 &\qquad +:\partial c^i(z)\partial X_i(z):- :\partial c^i(z)\partial Y_i(z):+\frac{1}{2}\partial^2c^i(z).
 \end{split}
\end{equation*}
\end{proof}
We define the following fields in $M$
\begin{equation}\label{eq:fields}
\begin{split}
E(z)  &= -\sum_{i=1}^n E_i(z) ,\qquad\qquad
N(z)  =  -\sum_{i=1}^n N_i(z)+\frac{1}{k}\sum_{i=1}^n(n-i)E_i(z), \\
\Psi^+(z) &=  \sum_{i=1}^n \Psi^+_i(z) ,\qquad\qquad
\Psi^-(z) = \sum_{i=1}^n \Psi^-_i(z), \\
F^+(z)  &=  \sum_{i=1}^n:\Psi_i^+(z)N_i(z):-\frac{1}{k}\sum_{i=1}^n(n-i)\partial\Psi^+_i(z), \\
F^-(z)  &= \sum_{i=1}^n:N_i(z)\Psi_i^-(z):-\frac{1}{k}\sum_{i=1}^n(n-i)\partial\Psi^-_i(z)+\\
&\qquad+\frac{1}{2k}\Bigl(\sum_{1\leq i<j\leq n}:E_j(z)\Psi^-_i(z):-\sum_{1\leq j<i\leq n}:E_j(z)\Psi^-_i(z):\Bigr).
\end{split}
\end{equation}
\begin{thm}
$$E,N,\Psi^\pm,F^\pm \ \in \ \bigcap_{i=1}^n \text{Ker}_M(Q_{\alpha_i})\cap\bigcap_{i=1}^{n-1} \text{Ker}_M(Q_{\beta_i}).$$
\end{thm}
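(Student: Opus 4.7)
The proof splits into two parallel parts, one for each family of screenings. In each case the key is to exploit the locality of the screening current in the index -- $Q_{\alpha_i}$ involves only the letters $b^i, Y_i$, while $Q_{\beta_i}$ involves only letters with indices $i, i+1$ -- to reduce the question to a handful of summands, and then invoke the identities provided by Lemmas \ref{lemma:Q1} and \ref{lemma:Q2}.

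For $Q_{\alpha_i}$: in each of $E, N, \Psi^\pm, F^\pm$, every summand indexed by $j \neq i$ is built from fields which commute with $Q_{\alpha_i}$, and hence contributes nothing; only the $j=i$ summand can potentially survive. But this summand is a normally ordered polynomial in $E_i, N_i, \Psi_i^\pm$ and their derivatives, and by Lemma \ref{lemma:Q1} these generators lie in $\text{Ker}_M(Q_{\alpha_i})$. Since the kernel of a screening is closed under $\partial$ and normally ordered products, the summand and thus the whole sum lie in $\text{Ker}_M(Q_{\alpha_i})$.

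For $Q_{\beta_i}$: only summands with indices in $\{i,i+1\}$ contribute. The cases $E$ and $\Psi^\pm$ are immediate from the first line of Lemma \ref{lemma:Q2}. For $N$, using $Q_{\beta_i}(N_i+N_{i+1}) = \tfrac{1}{k}Q_{\beta_i}(E_i)$ and $Q_{\beta_i}(E_i+E_{i+1})=0$ from Lemma \ref{lemma:Q2}, a short calculation shows that the coefficients $(n-i)/k$, $(n-i-1)/k$ are exactly those producing cancellation. For $F^+$, one rewrites the correction $-\tfrac{n-i}{k}\partial\Psi_i^+ - \tfrac{n-i-1}{k}\partial\Psi_{i+1}^+$ as $-\tfrac{1}{k}\partial\Psi_i^+ - \tfrac{n-i-1}{k}\partial(\Psi_i^+ + \Psi_{i+1}^+)$: the second piece is $\partial$ of a kernel element, and the first exactly cancels the residual $\tfrac{1}{k}Q_{\beta_i}(\partial\Psi_i^+)$ produced by applying Lemma \ref{lemma:Q2} to $:\Psi_i^+ N_i: + :\Psi_{i+1}^+ N_{i+1}:$.

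The main obstacle is the case $F^-$. In addition to a diagonal part $:N_j \Psi_j^-:$ together with the $\partial\Psi_j^-$ correction (both handled exactly as for $F^+$), one must account for the cross terms $:E_q \Psi_p^-:$ with $p\neq q$. Applying $Q_{\beta_i}$, and using that it is a graded derivation of normally ordered products (valid here because all relevant OPEs with the screening current have only simple poles), I would classify the contributing cross terms by the location of $i$ and $i+1$ among $\{p,q\}$. When exactly one of $p,q$ lies in $\{i,i+1\}$, the antisymmetric sign pattern $\text{sgn}(q-p)$ of the cross sum forces, for each fixed $a\notin\{i,i+1\}$, the pair $:E_a\Psi_i^-:\; + :E_a\Psi_{i+1}^-:$ (or the analogous pair with the roles of $E$ and $\Psi^-$ reversed) to appear with a common coefficient; such a pair vanishes under $Q_{\beta_i}$ because the ``companion'' factor $E_a$ (resp.\ $\Psi_a^-$) commutes with $Q_{\beta_i}$ while $\Psi_i^-+\Psi_{i+1}^-$ (resp.\ $E_i+E_{i+1}$) is a kernel element by Lemma \ref{lemma:Q2}. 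What remains is the $\{p,q\}=\{i,i+1\}$ contribution, which combines with the $F^+$-style diagonal calculation to produce precisely the combination of the third identity of Lemma \ref{lemma:Q2}, yielding $Q_{\beta_i}(F^-)=0$. The genuinely computational step is tracking the interplay between the coefficients $(n-j)/k$, the prefactor of the cross sum, and the third identity of Lemma \ref{lemma:Q2}; this is where one must most carefully rely on the preceding lemma.
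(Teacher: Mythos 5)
Your overall strategy is exactly the paper's: the published proof is a one-line citation of Lemmas \ref{lemma:Q1} and \ref{lemma:Q2}, and your write-up supplies precisely the missing bookkeeping (locality of each screening current in the index, reduction to the $i$, respectively $\{i,i+1\}$, summands, and matching of the $(n-i)/k$ coefficients against the lemmas). Your treatment of $Q_{\alpha_i}$, of $E,N,\Psi^\pm$, and of $F^+$ is correct and is what the authors intend.

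However, the step you describe as ``the genuinely computational step'' for $F^-$ does not actually close with the formulas as printed. The $\{p,q\}=\{i,i+1\}$ part of the antisymmetrized cross sum in \eqref{eq:fields} is
$\tfrac{1}{2k}\bigl(:E_{i+1}\Psi^-_i:-:E_i\Psi^-_{i+1}:\bigr)$,
whereas the third identity of Lemma \ref{lemma:Q2} requires this combination with coefficient $\tfrac{1}{k}$. After your (correct) cancellation of the diagonal and $\partial\Psi^-$ pieces, one is left with
$-\tfrac{1}{2k}\,Q_{\beta_i}\bigl(:E_{i+1}\Psi^-_i:-:E_i\Psi^-_{i+1}:\bigr)$,
and this is not zero for free: since $Q_{\beta_i}$ is an odd derivation of the Wick product, it equals
$-\tfrac{1}{2k}\bigl(-:Q_{\beta_i}(E_i)(\Psi^-_i+\Psi^-_{i+1}):+:(E_i+E_{i+1})Q_{\beta_i}(\Psi^-_i):\bigr)$,
and neither $E_i$ nor $\Psi^-_i$ individually lies in $\text{Ker}_M(Q_{\beta_i})$. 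So either one of the two printed coefficients ($\tfrac{1}{2k}$ in \eqref{eq:fields} versus $\tfrac{1}{k}$ in Lemma \ref{lemma:Q2}) carries a typo --- in which case your argument goes through verbatim once the normalizations are made consistent --- or the residual term above must be shown to be annihilated by $Q_{\beta_i}$ by a separate computation, which neither you nor the paper supplies. You should not assert that the surviving terms reproduce ``precisely'' the combination of Lemma \ref{lemma:Q2} without resolving this factor of $2$.
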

\begin{proof}
By Lemma \ref{lemma:Q1} we have $E,N,\Psi^\pm,F^\pm \ \in \ \bigcap_{i=1}^n \text{Ker}_M(Q_{\alpha_i})$ and Lemma \ref{lemma:Q2} implies
$E,N,\Psi^\pm,F^\pm \ \in \ \bigcap_{i=1}^{n-1} \text{Ker}_M(Q_{\beta_i})$.
\end{proof}
\begin{defn}
Let $\cW_{n,k}$ denote the vertex algebra generated by $E,N,\Psi^\pm,F^\pm$. We define $\cW_{n,\infty}$ to be $\lim_{k\rightarrow\infty} \cW_{n,k}$. 
\end{defn}
\begin{thm}
$\cV_{n}(\widehat{\cS\cD})\cong \cW_{n,\infty}$.
\end{thm}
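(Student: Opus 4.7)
The plan is to build an explicit surjective vertex algebra homomorphism $\Phi:\cV_{n}(\widehat{\cS\cD})\to\cW_{n,\infty}$ out of six matched pairs of generators, and then to upgrade it to an isomorphism using the simplicity of $\cV_{n}(\widehat{\cS\cD})$.

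First I would set up the limit concretely. The ambient free-field algebra $M$ (a rank-$2n$ Heisenberg tensored with the rank-$n$ $bc$-system) is independent of $k$, and the only $k$-dependence in \eqref{eq:fields} lives in explicit $1/k$ coefficients; consequently $\cW_{n,k}$ fits into a deformable family $\cW_n$ over $F_K$ in the sense of Section \ref{sectiondefsv}. As $\kappa\to\infty$ these corrections drop out, leaving explicit limit fields
$$E_\infty,\quad N_\infty=-\sum_{i=1}^n \bigl(\partial X_i - :b^i c^i:\bigr),\quad \Psi^{\pm}_\infty,\quad F^{+}_\infty=\sum_{i=1}^n :b^i\partial X_i:,$$
$$F^{-}_\infty=\sum_{i=1}^n :(\partial X_i-:b^i c^i:)(\partial c^i-:c^i\partial Y_i:):,$$
which strongly generate $\cW_{n,\infty}$ by construction.

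Next I would match OPEs. Summing the relations of Lemma \ref{lemma:Q1} over $i$ shows that $E_\infty,N_\infty,\Psi^{+}_\infty,\Psi^{-}_\infty$ close onto an isomorphic copy of $V_n(\gg\gl(1|1))$, with the level $n$ arising as $n$ copies of level $1$. Matching against \eqref{gl11struc} at $c=n$ pins down, up to a global rescaling, a linear dictionary sending $j^{0,0},j^{1,0}$ to a basis of $\mathbb C\bra E_\infty,N_\infty\ket$ and $j^{\pm,0}$ to scalar multiples of $\Psi^{\pm}_\infty$. The more involved step is to compute the OPEs of $F^{\pm}_\infty$ against the weight-$1$ generators and against each other in $M$, and to exhibit scalars $d_\pm$ together with weight-$2$ normally ordered polynomial tails in the $j^{a,0}$'s and their derivatives, such that $j^{\pm,1}\leftrightarrow d_\pm F^{\pm}_\infty+\mathrm{tail}$ reproduces exactly the OPEs of $\{J^{a,0},J^{\pm,1}\}$ in $\cM_{n}(\widehat{\cS\cD})$ dictated by the Lie superalgebra $\widehat{\cS\cD}$.

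Once this OPE-matching is in hand, Lemma \ref{weakfg} guarantees that the dictionary extends uniquely to a vertex algebra homomorphism $\widetilde\Phi:\cM_{n}(\widehat{\cS\cD})\to\cW_{n,\infty}$ sending $1\mapsto 1$, since $\cM_{n}(\widehat{\cS\cD})$ is generated by $\{J^{a,0},J^{\pm,1}\}$ and all further $J^{a,k}$ are obtained from these by iterated OPE. The kernel of $\widetilde\Phi$ is a proper graded $\widehat{\cS\cD}$-submodule of $\cM_{n}(\widehat{\cS\cD})$, hence is contained in $\cS\cI_n$, so $\widetilde\Phi$ descends to a nonzero map $\Phi:\cV_{n}(\widehat{\cS\cD})\to\cW_{n,\infty}$; simplicity of $\cV_{n}(\widehat{\cS\cD})$ forces $\Phi$ to be injective, and surjectivity is automatic because $\im(\Phi)$ contains the six generators of $\cW_{n,\infty}$. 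The main obstacle is the weight-$2$ OPE matching: $F^{\pm}_\infty$ are cubic in the free fields, and the OPE $F^{+}_\infty(z)F^{-}_\infty(w)$ produces a large number of normally ordered terms whose identification with the correct combinations of $j^{a,0}$, $\partial j^{a,0}$, and $j^{a,1}$ is the core technical calculation. Everything else in the argument is essentially formal, relying on simplicity, Lemma \ref{weakfg}, and the framework of Section \ref{sectiondefsv}.
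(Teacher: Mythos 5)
Your overall setup (passing to the $\kappa\to\infty$ limit of the explicit fields \eqref{eq:fields} inside the $k$-independent ambient algebra $M$) is fine, but the two structural steps that follow each contain a genuine gap. First, Lemma \ref{weakfg} only says that $\{J^{a,0},J^{\pm,1}\}$ \emph{generate} $\cM_n(\widehat{\cS\cD})$ under iterated circle products; it does not give $\cM_n(\widehat{\cS\cD})$ a universal property relative to the OPEs of those six fields. The universal property of the vacuum module is that of the induced module $U(\widehat{\cS\cD})\otimes_{U(\widehat{\cS\cP})}\C_n$, so to manufacture $\widetilde\Phi$ you must produce candidate fields $j^{a,k}_\infty$ in $\cW_{n,\infty}$ for \emph{all} $k\geq 0$, $a=0,1,\pm$, and verify the full bracket and cocycle of $\widehat{\cS\cD}$ at central charge $n$ — not merely the pairwise OPEs of six fields. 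A map specified only on generators need not respect the many identities in $\cM_n(\widehat{\cS\cD})$ expressing one iterated circle product of the six generators in terms of another, so "extends uniquely by Lemma \ref{weakfg}" does not follow. Second, your descent step runs backwards: if $\ker\widetilde\Phi$ is a proper graded $\widehat{\cS\cD}$-submodule, maximality gives $\ker\widetilde\Phi\subseteq\cS\cI_n$, which produces a surjection $\cW_{n,\infty}\cong\cM_n(\widehat{\cS\cD})/\ker\widetilde\Phi\twoheadrightarrow\cV_n(\widehat{\cS\cD})$; it does \emph{not} allow $\widetilde\Phi$ to descend to a map out of the quotient $\cV_n(\widehat{\cS\cD})$. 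To conclude an isomorphism you need the reverse containment $\cS\cI_n\subseteq\ker\widetilde\Phi$, which by Theorem \ref{uniquesv} amounts to checking that the image of the singular vector $D_+$ — a normally ordered relation of weight $n+1/2$ and degree $n+1$ in the generators — vanishes in $\cW_{n,\infty}$. That is a substantial extra computation your outline does not mention, and without it simplicity of $\cV_n(\widehat{\cS\cD})$ gives you nothing.

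The paper's proof avoids both problems by a shorter route that you may want to adopt: using Friedan--Martinec--Shenker bosonization twice, it identifies $\partial X_i,\partial Y_i,b^i,c^i$ with expressions in a rank $n$ $bc\beta\gamma$-system, under which the generators of $\cW_{n,\infty}$ become \emph{literally} the quadratics $\sum_i(:\tilde b_i\tilde c_i:+:\tilde\beta_i\tilde\gamma_i:)$, $\sum_i:\tilde b_i\tilde c_i:$, $\sum_i:\tilde b_i\tilde\gamma_i:$, $\sum_i:\tilde c_i\tilde\beta_i:$, $-\sum_i:\tilde b_i\partial\tilde\gamma_i:$, $-\sum_i:\tilde\beta_i\partial\tilde c_i:$. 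By Theorem \ref{TAFMO} and Lemma \ref{weakfg} these generate $\cF^{GL_n}\cong\cV_n(\widehat{\cS\cD})$, so the two algebras coincide as subalgebras of a common ambient vertex algebra, with no OPE matching, no universal property, and no singular-vector verification needed.
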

\begin{proof}
We will construct the isomorphism explicitly. Let $\tilde\beta_i,\tilde\gamma_i,\tilde b_i,\tilde c_i$ be the generators of a rank $n$ $bc\beta\gamma$-system, and let $\tilde\phi^\pm_i,\tilde\phi_i$ be bosonic fields with OPE
$$ \tilde\phi^\pm_i(z)\tilde\phi^\pm_j(w)  \ \sim \ \pm\delta_{i,j}\ln(z-w),\qquad \qquad \tilde\phi_i(z)\tilde\phi_j(w)  \ \sim \ \delta_{i,j}\ln(z-w)\,.$$
Using the well-known bosonization isomorphism~\cite{FMS}, one obtains
$$ \tilde b_i(z)=e^{-\tilde\phi_i(z)},\qquad \tilde c_i(z)=e^{\tilde\phi_i(z)},\qquad:\tilde b_i(z)\tilde c_i(z):=-\partial\tilde\phi_i(z),$$ 
$$ \tilde \beta_i(z)=:e^{-\tilde\phi^-_i(z)+\tilde\phi^+_i(z)}\partial\tilde\phi^+_i(z):,\qquad \tilde \gamma_i(z)=e^{\tilde\phi^-_i(z)-\tilde\phi^+_i(z)},\qquad:\tilde\beta_i(z)\tilde \gamma_i(z):=\partial\tilde\phi^-_i(z).$$
We define 
$$Y_i=\tilde\phi_i-\tilde\phi^-_i,\qquad X_i=\tilde\phi^-_i-\tilde\phi^+_i\qquad \phi=\tilde\phi_--\tilde\phi^-_i+\tilde\phi^+_i.$$
Then the nonzero OPEs of these fields are 
$$ Y_i(z)X_j(w)  \ \sim \ \delta_{i,j}\ln(z-w),\qquad\qquad \phi_i(z)\phi_j(w)  \ \sim \ \delta_{i,j}\ln(z-w)\,.$$
Finally, we use bosonization again to obtain 
$$ b_i(z)=e^{-\phi_i(z)},\qquad c_i(z)=e^{\phi_i(z)},\qquad:b_i(z)c_i(z):=-\partial\phi_i(z).$$
Under this isomorphism, we get the following identifications
\begin{equation}
\begin{split}
E_i(z)\ &= \ \partial Y_i(z) \ = \ \partial \tilde\phi_i(z) -\partial\tilde\phi^-_i(z)\ = \ -:\tilde b_i(z)\tilde c_i(z):-:\tilde\beta_i(z)\tilde\gamma_i(z):, \\
N_i(z)\ &= \ \partial Y_i(z)-:b_i(z)c_i(z): \ = \ \partial \tilde\phi_i(z)\ = \ -:\tilde b_i(z)\tilde c_i(z):, \\
\Psi^+_i(z) \ &= \ b_i(z) \ = \ e^{-\phi_i(z)}\ = \ e^{-\tilde\phi_i(z)+\tilde\phi^-_i(z)-\tilde\phi^+_i(z)}\ = \ :\tilde b_i(z)\tilde\gamma_i(z):,\\
\Psi^-_i(z) \ &= \ \partial c_i(z)-:c_i(z)\partial Y_i(z): \ = \ :e^{\phi_i(z)}(\partial\phi_i(z)-\partial Y_i(z)):\\ &= \ 
:e^{\tilde\phi_i(z)-\tilde\phi^-_i(z)+\tilde\phi^+_i(z)}\partial\tilde\phi^+_i(z):\ = \ :\tilde c_i(z)\tilde\beta_i(z):.\\
\end{split}
\end{equation}
and hence 
\begin{equation*}
\begin{split}
E(z) &= \sum_{i=1}^n :\tilde b_i(z)\tilde c_i(z):+:\tilde\beta_i(z)\tilde\gamma_i(z): ,\qquad\qquad
N(z)= \sum_{i=1}^n :\tilde b_i(z)\tilde c_i(z):, \\
\Psi^+(z) &= \sum_{i=1}^n :\tilde b_i(z)\tilde \gamma_i(z):,\qquad\qquad
\Psi^-(z)=  \sum_{i=1}^n :\tilde c_i(z)\tilde \beta_i(z):, \\
F^+(z)  &=  \sum_{i=1}^n :\Psi^+_i(z)N_i(z): \ = \ \sum_{i=1}^n: (:\tilde b_i(z)\tilde\gamma_i(z):)(:\tilde b_i(z)\tilde c_i(z):): 
= \ -\sum_{i=1}^n :\tilde b_i(z)\partial\tilde\gamma_i(z):,\\ 
F^-(z)  &=  \sum_{i=1}^n :N_i(z)\Psi^-_i(z): \ = \ \sum_{i=1}^n:(:\tilde b_i(z)\tilde c_i(z):)(:\tilde c_i(z)\tilde \beta_i(z):): 
= \ -\sum_{i=1}^n :\tilde \beta_i(z)\partial\tilde c_i(z): .\\ 
\end{split}
\end{equation*}
But these fields are by Lemma \ref{weakfg} a generating set of $\cV_{n}(\widehat{\cS\cD})$. 
\end{proof}
The operator product algebra of $\cW_{2,k}$ can be computed explicitly. 
For this, we choose a slightly different basis from \eqref{eq:fields}. Let $n=2$, and define
$$
 G^\pm\ = \ F^\pm \pm\Bigl(\frac{1}{2k}-\frac{1}{2}\Bigr)\partial \Psi^\pm.$$
There will be two additional fields, a Virasoro field of central charge zero,
\begin{equation*}
\begin{split}
T\ = \ &:E_1(z)N_1(z):+:E_2(z)N_2(z):-:\Psi_1^+(z)\Psi_1^-(z):-:\Psi_2^+(z)\Psi_2^-(z):\\
&\ -\frac{1+k}{2k}\partial E_1(z)+\frac{1-k}{2k}\partial E_2(z),
\end{split}
\end{equation*}
and another dimension two field
\begin{equation*}
\begin{split}
 H\ = \ &-\frac{1}{2}\bigl( :N_1(z)N_1(z):+:N_2(z)N_2(z):-:E_1(z)N_1(z):-:E_2(z)N_2(z):\\
 &\ +:\Psi_1^+(z)\Psi_1^-(z):-:\Psi_2^+(z)\Psi_2^-(z):\bigr)+
\frac{1}{2k}\bigl(:E_1(z)N_2(z):-:E_2(z)N_1(z):\\
&\ +:\Psi_1^+(z)\Psi_2^-(z):-:\Psi_2^+(z)\Psi_1^-(z):+\frac{1}{k}:E_1(z)E_2(z):+\partial N_1(z)-\partial N_2(z)\bigr)\\
&\ -\frac{1}{8k^2}\bigl((2k^2+2k+1)\partial E_1(z)+(2k^2-2k+1)\partial E_2(z)\bigr).
\end{split}
\end{equation*}
Then $E,N,\Psi^\pm$ have the operator product algebra of $\AKMSA{gl}{1}{1}$ at level two, and $G^\pm$ are Virasoro primaries of dimension two, while $H$ is the {\emph{partner}} of $T$,
$$
T(z)H(w)\ \sim \ \Bigl(\frac{3}{k^2}-1\Bigr)\frac{1}{(z-w)^4}+\frac{3}{4k^2}\frac{E(w)}{(z-w)^3}+\frac{2H(w)}{(z-w)^2}+\frac{\partial H(w)}{(z-w)}.
$$ 
In addition the operator products of the dimension two fields with the currents are
\begin{equation*}
\begin{split}
N(z)H(w)\ &\sim \ \frac{3}{2k^2}\frac{1}{(z-w)^3}-\Bigl(\frac{1}{4}-\frac{3}{4k^2}\Bigr)\frac{E(w)}{(z-w)^2},\qquad 
N(z)G^\pm(w)\ \sim \ \frac{\pm G^\pm(w)}{(z-w)},\\
\Psi^\pm(z)H(w)\ &\sim \ -\frac{G^\pm(w)}{(z-w)},\qquad
\Psi^\pm(z)G^\mp(w)\ \sim \ \frac{N(w)}{(z-w)^2}\pm\frac{T(w)}{(z-w)},\\
E(z)H(w)\ &\sim \ -\frac{N(w)}{(z-w)^2},\qquad
E(z)G^\pm(w)\ \sim \ -\frac{\pm\psi^\pm(w)}{(z-w)^2}.\\
\end{split}
\end{equation*}  
Introduce the following normally ordered polynomials in the currents and their derivatives
\begin{equation*}
\begin{split}
X_0=&\frac{1}{2}\Bigl(2:HE:-2:TE:-2:TN:-2:G^+\Psi^-:-2:G^-\Psi^+:+:\partial\Psi^-\Psi^+:+:\partial\Psi^+\Psi^-:\\
&+:\partial EN:-2:N\Psi^+\Psi^-:+:NNE:-:E\Psi^+\Psi^-:+:NEE:\Bigr)\\
&-\frac{1}{8k^2}\Bigl((1-2k^2)\partial^2E+(3-2k^2):\partial EE:+(1-k^2):EEE:\Bigr),
\end{split}
\end{equation*}
\begin{equation*}
\begin{split}
X^+= &\frac{1}{2}\Bigl(:N\partial\Psi^+:-2:H\Psi^+:-2NG^+:+:T\Psi^+:-:EG^+:-:NN\Psi^+:-:NE\Psi^+:\Bigr)\\
&-\frac{1}{8k^2}\Bigl((2+2k^2)\partial^2\Psi^+-:\partial E\Psi^+:-(2+2k^2):E\partial\Psi^+:-(1-k^2):EE\psi^+:\Bigr),
\end{split}
\end{equation*}
\begin{equation*}
\begin{split}
X^-= &\frac{1}{2}\Bigl(2NG^-:-:N\partial\Psi^-:-2:H\Psi^-:+:T\Psi^-:+:EG^-:-:NN\Psi^-:-:NE\Psi^-:\Bigr)\\
&+\frac{1}{8k^2}\Bigl((2+2k^2)\partial^2\Psi^++5:\partial E\Psi^-:-(2+2k^2):E\partial\Psi^-:+(1-k^2):EE\psi^-:\Bigr),
\end{split}
\end{equation*}
\begin{equation*}
\begin{split}
X_2=&3\partial^2N+(2+2k^2)\partial^2E+4:\partial\psi^-\psi^+:-4:\partial\psi^+\psi^-:+
 4:\partial NE:+4:\partial EN:+2:\partial EE:.
\end{split}
\end{equation*}

Then the operator products of the dimension two fields with themselves are
\begin{equation*}
\begin{split}
H(z)H(w)\ &\sim \ -\frac{1}{4k^2}\frac{1}{(z-w)^2}\Bigl(2\partial E(w)+3\partial N(w)-(2k^2+2)T(w)+4:N(w)E(w):+\\
&\qquad :E(w)E(w):-4:\psi^+(w)\psi^-(w):\Bigr)-\frac{1}{8k^2}\frac{X_2(w)}{(z-w)}, \\
H(z)G^+(w)\ &\sim \ \Bigl(\frac{1}{2}-\frac{3}{4k^2}\Bigr)\frac{\Psi^+(w)}{(z-w)^3}+\Bigl(\frac{1}{4}-\frac{3}{4k^2}\Bigr)\frac{\partial\Psi^+(w)}{(z-w)^2}+\frac{G^+(w)+X^+(w)}{(z-w)},\\
H(z)G^-(w)\ &\sim \ \Bigl(\frac{1}{2}-\frac{9}{4k^2}\Bigr)\frac{\Psi^-(w)}{(z-w)^3}+\Bigl(\frac{1}{4}-\frac{3}{4k^2}\Bigr)\frac{\partial\Psi^-(w)}{(z-w)^2}+\frac{G^-(w)+X^-(w)}{(z-w)},\\
G^+(z)G^-(w)\ &\sim \ -\Bigl(1-\frac{3}{k^2}\Bigr)\frac{1}{(z-w)^4}-\Bigl(\frac{1}{2}-\frac{3}{2k^2}\Bigr)\frac{E(w)}{(z-w)^3}-\frac{1}{4}\frac{\partial E(w)-8H(w)}{(z-w)^2}\\
&\qquad+(H(w)+X_0(w))(z-w)^{-1}.
\end{split}
\end{equation*}
We see 
\begin{prop}
$\cW_{2,k}$ is strongly generated by $E,N,\Psi^\pm,T,H,G^\pm$.
\end{prop}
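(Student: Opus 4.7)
The plan is to verify that the subalgebra of $\cW_{2,k}$ generated by the eight fields $\{E, N, \Psi^\pm, T, H, G^\pm\}$ is closed under the circle products and contains the original generators $\{E, N, \Psi^\pm, F^\pm\}$ of $\cW_{2,k}$. Lemma \ref{reconlem}, together with closure of OPE among a set of fields, will then yield strong generation by that set.

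First I would check that all eight fields lie in $\cW_{2,k}$. The fields $E, N, \Psi^\pm$ are part of the defining generating set by definition. The fields $G^\pm = F^\pm \pm (\tfrac{1}{2k} - \tfrac{1}{2})\partial \Psi^\pm$ are linear combinations of a generator and a derivative of a generator, hence lie in $\cW_{2,k}$. For $T$ and $H$ I would use the OPE relations written just above the proposition: from $\Psi^\pm(z) G^\mp(w) \sim \tfrac{N(w)}{(z-w)^2} \pm \tfrac{T(w)}{(z-w)}$ one reads off $T$ as a zeroth circle product of elements of $\cW_{2,k}$, and from $G^+(z) G^-(w)$ the simple pole is $H(w) + X_0(w)$, where $X_0$ is a normally ordered polynomial in the other seven fields and their derivatives; hence $H \in \cW_{2,k}$ as well.

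Next, the main technical step: verify that every OPE between any two of the eight fields produces only normally ordered polynomials in the eight fields and their derivatives. The currents $E, N, \Psi^\pm$ close linearly, forming an affine $\AKMSA{gl}{1}{1}$ at level two; the OPEs of $T$ with the other seven fields are the standard primary/quasiprimary ones recorded above; the non-trivial OPEs are $H \times H$, $H \times G^\pm$, and $G^+ \times G^-$, which produce the fields $X_0, X^\pm, X_2$ as their highest non-closure terms. To check that these $X$-fields indeed equal the polynomial expressions written above, I would expand each OPE in terms of the elementary fields $E_i, N_i, \Psi_i^\pm$ via the definitions \eqref{eq:fields}, use the Heisenberg and Clifford OPEs, and reassemble the simple-pole term in terms of the eight generators. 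The $k$-dependence of the coefficients of $X_0, X^\pm, X_2$ comes from the $k^{-1}$ terms in the definitions of $N$ and $F^\pm$ and from the rearrangement/quasi-symmetry terms appearing when moving derivatives inside iterated normally ordered products.

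Once the closure is established, the conclusion is immediate. Let $\cU$ denote the vertex subalgebra of $\cW_{2,k}$ generated by $\{E, N, \Psi^\pm, T, H, G^\pm\}$; by closure $\cU$ is strongly generated by these eight fields. Since $F^\pm = G^\pm \mp (\tfrac{1}{2k} - \tfrac{1}{2}) \partial \Psi^\pm \in \cU$, we have $\cU \supseteq \{E, N, \Psi^\pm, F^\pm\}$ and therefore $\cU = \cW_{2,k}$. The main obstacle is the third step: the OPE of $G^+$ with $G^-$ and of $H$ with itself, where cubic normally ordered monomials in $E_i, N_i, \Psi_i^\pm$ occur and careful bookkeeping of the non-associativity corrections is required to identify the residues with the specific polynomials $X_0, X_2$ displayed in the text.
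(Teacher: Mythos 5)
Your proposal is correct and follows essentially the same route as the paper: the paper's ``proof'' consists precisely of the explicit OPE computations displayed before the proposition, which exhibit the singular parts of all pairwise OPEs of the eight fields as normally ordered polynomials ($X_0$, $X^\pm$, $X_2$, etc.) in those fields and their derivatives, whence closure gives strong generation of the subalgebra they generate, and $F^\pm = G^\pm \mp(\tfrac{1}{2k}-\tfrac{1}{2})\partial\Psi^\pm$ shows that subalgebra is all of $\cW_{2,k}$. Your identification of $T$ and $H$ as circle products of the generators (via $\Psi^\pm\circ_0 G^\mp$ and the first-order pole of $G^+G^-$) is exactly the justification implicit in the paper's presentation.
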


\section{The relationship between $\cB_{n,k}$ and $\cW_{n,k}$} 
Recall the algebra $\cB_{n,k}$ that we constructed in Section \ref{sectiondefsv}, which has a minimal strong generating set consisting of $4n$ fields, for generic values of $k$. In this section we show that for $n=2$, $\cW_{2,k+2}$ and $\cB_{2,k}$ have the same generators and OPE relations. More generally, we conjecture that $\cW_{n,k+n}$ is isomorphic to $\cB_{n,k}$ for all $n$ and $k$.

Recall that $V_k(\gg\gl(n))$ has a strong generating set $\{X^{ij}|1 \leq i,j\leq n\}$ satisfying
\begin{equation}
X^{ij}(z)X^{lm}(w)\ \sim \ \frac{k\delta_{j,l}\delta_{i,m}}{(z-w)^2}+\frac{\delta_{j,l}X^{im}(w)-\delta_{i,m}X^{lj}(w)}{(z-w)} .
\end{equation}
Recall the $bc\beta\gamma$-system $\cF = \cE\otimes \cS$ of rank $n$. 
There is a map $V_1(\gg\gl(n)) \ra \cE$ sending $X^{ij} \mapsto :c^ib^j:$ and a map $V_{-1}(\gg\gl(n)) \ra \cS$ sending $X^{ij} \mapsto - :\gamma^i\beta^j:$. These combine to give us a map $V_0(\gg\gl(n)) \ra \cF$ sending $X^{ij} \mapsto c^ib^j: - :\gamma^i\beta^j$.

A straightforward computation shows that $\cB_{n,k} = \text{Com}(V_k(\gg\gl(n)),V_k(\gg\gl(n))\otimes \cF)$ contains the following elements: 

 $$\tilde \Psi^-=\sum_{l=1}^n:c^l\beta^l:,\qquad \ \tilde\Psi^+=-\sum_{l=1}^n:\gamma^l b^l:,$$ 
 $$\tilde E=-\sum_{l=1}^n:c^l b^l:+:\gamma^l\beta^l:,\qquad \
 \tilde N=\sum_{l=1}^n\frac{2}{k}X^{ll}-:c^l b^l:+:\gamma^l\beta^l:,$$
$$ \tilde F^-=\frac{1}{k}\sum_{1\leq j,l\leq n} :X^{jl}c^l\beta^j:+\sum_{l=1}^n :c^l\partial \beta^l:,\qquad \
 \tilde F^+ = \frac{1}{k}\sum_{1\leq j,l\leq n} :X^{jl}\gamma^l b^j:+\sum_{l=1}^n :\gamma^l\partial b^l:.$$
 By Lemma \ref{weakfg}, the elements of $\cB_{n,\infty} = \cV_{n}(\widehat{\cS\cD})$ corresponding to these six elements under $\phi_k: \cB_{n,k} \ra \cV_{n}(\widehat{\cS\cD})$, are a generating set for $\cV_{n}(\widehat{\cS\cD})$. By Corollary \ref{genbk}, these six fields generate $\cB_{n,k}$ for generic values of $k$.

\begin{thm} For generic values of $k$, $\cW_{2,k+2}$ and $\cB_{2,k}$ have the same OPE algebra. \end{thm}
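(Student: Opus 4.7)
The plan is a direct verification. On the $\cW_{2,k+2}$ side the OPE algebra of the eight strong generators $E, N, \Psi^\pm, T, H, G^\pm$ is written out completely in the preceding section (with every occurrence of $k$ replaced by $k+2$); on the $\cB_{2,k}$ side the concluding paragraph of this section has already produced six candidate generators $\tilde E, \tilde N, \tilde\Psi^\pm, \tilde F^\pm$ as explicit elements of $V_k(\gg\gl(2))\otimes \cF$. The task therefore breaks into two steps: produce two more fields $\tilde T, \tilde H$ of weight $2$ in $\cB_{2,k}$, and then verify that the resulting eight fields (with $\tilde G^\pm = \tilde F^\pm \pm \bigl(\frac{1}{2(k+2)}-\frac{1}{2}\bigr)\partial\tilde\Psi^\pm$, by analogy with the definition of $G^\pm$) satisfy the same OPE relations as $E, N, \Psi^\pm, T, H, G^\pm$ in $\cW_{2,k+2}$.

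First I would construct $\tilde T$ and $\tilde H$ explicitly. For $\tilde T$ the natural candidate is built from the coset stress-energy tensor $L^{\cF} - L^{\text{Sug}}_{V_k(\gg\gl(2))}$, corrected by suitable derivatives of the currents $\tilde E, \tilde N$ so that the resulting field has central charge zero (matching $c(T)=0$ in $\cW_{2,k+2}$). The weight-two field $\tilde H$ is obtained from the explicit quartic formula for $H$ in the previous section. In both cases the translation can be read off via the bosonization dictionary $\tilde b_i = e^{-\tilde\phi_i}$, $\tilde c_i = e^{\tilde\phi_i}$, $\tilde\beta_i = :e^{-\tilde\phi^-_i+\tilde\phi^+_i}\partial\tilde\phi^+_i:$, $\tilde\gamma_i = e^{\tilde\phi^-_i-\tilde\phi^+_i}$ used in the identification $\cV_n(\widehat{\cS\cD}) \cong \cW_{n,\infty}$.

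Second, I would check every OPE of the eight fields inside $V_k(\gg\gl(2))\otimes\cF$ by direct computation. Since each generator is a normally ordered polynomial in the $bc\beta\gamma$ fields and the affine currents $X^{ij}$, the computation reduces to repeated application of Wick's theorem. One verifies in sequence that (i) $\tilde E, \tilde N, \tilde\Psi^\pm$ generate an affine $\gg\gl(1|1)$ at level $2$ independent of $k$; (ii) $\tilde T$ is a Virasoro of central charge zero acting on the other seven fields with the expected conformal weights; (iii) $\tilde G^\pm$ are weight-two primaries; (iv) the $HH$, $HG^\pm$, and $G^+G^-$ OPEs reproduce the composite weight-four terms $X_0, X^\pm, X_2$ of Section 9 with $k$ replaced by $k+2$, and all rational prefactors in $k$ match.

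The principal obstacle is the sheer bulk of the calculation: the $HG^\pm$ and $G^+G^-$ OPEs involve poles of orders up to four with coefficients that are composite weight-four polynomials whose rational-in-$k$ prefactors must agree after the shift $k\mapsto k+2$. In practice this is a mechanical but extensive computer-algebra verification (e.g.\ with OPEdefs). The conceptual content of the shift is that $\cB_{2,k}$ is constructed as a coset of $V_k(\gg\gl(2))$, while the effective level of the $\cW$-algebra $\cW_{2,\ell}$ is $\ell$ shifted by the rank $n=2$, in agreement with the general conjecture $\cW_{n,k+n}\cong \cB_{n,k}$ formulated in the introduction.
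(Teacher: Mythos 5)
Your overall strategy --- exhibit an explicit field identification and verify all OPEs by computer algebra --- is exactly what the paper does; its entire proof consists of writing down the dictionary and asserting the (computer) check. The gap is that the dictionary you commit to is not the right one, and with it step (iv) of your verification would fail. The paper's identification, with $s=(1+k)/(4+2k)$, is $E\rightarrow\tilde E$, $N\rightarrow \tilde N-\frac{1}{k}\tilde E$, $\Psi^\pm\rightarrow\tilde\Psi^\pm$, and
$$G^\pm\ \rightarrow\ (4s-1)\,\tilde F^\pm + (\text{const})\,\partial\tilde\Psi^\pm \pm (2s-1):\!\tilde N\tilde\Psi^\pm\!:,$$
i.e.\ $\tilde F^\pm$ enters with the nontrivial prefactor $4s-1=k/(k+2)$, there is a genuinely \emph{nonlinear} correction proportional to $:\!\tilde N\tilde\Psi^\pm\!:$, and even $N$ is shifted by $-\tilde E/k$. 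Your ansatz $\tilde G^\pm=\tilde F^\pm\pm\bigl(\tfrac{1}{2(k+2)}-\tfrac12\bigr)\partial\tilde\Psi^\pm$, taken ``by analogy with the definition of $G^\pm$,'' misses all three of these features; since the generators are only determined up to such weight-preserving redefinitions, finding the correct normalization and the quadratic correction term is the actual content of the proof, not a detail to be absorbed into the computation. A workable version of your plan would leave the coefficients of $\tilde F^\pm$, $\partial\tilde\Psi^\pm$, and $:\!\tilde N\tilde\Psi^\pm\!:$ (and of $\tilde E$ inside $N$) undetermined and fix them by matching the low-order poles of, say, $N\circ E$, $N\circ G^\pm$ and $\Psi^\mp\circ G^\pm$ before attempting the heavy $G^+G^-$ and $HG^\pm$ checks.

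Two smaller remarks. First, you do not need to construct $\tilde T$ and $\tilde H$ independently: once the six fields $E,N,\Psi^\pm,G^\pm$ are matched, $T$ and $H$ are \emph{defined} on the $\cB_{2,k}$ side as the appropriate OPE coefficients (e.g.\ $T$ from the first-order pole of $\Psi^\mp\circ G^\pm$ and $H$ from the second-order pole of $G^+\circ G^-$), so only the OPEs among the six generators need to be verified --- which is what the paper does. Your proposed route through the coset Virasoro $L^{\cF}-L^{\mathrm{Sug}}$ is not wrong but adds unnecessary work and another opportunity for a normalization mismatch. Second, your closing heuristic that the shift $k\mapsto k+2$ is ``the level shifted by the rank'' is consistent with the paper's conjecture but plays no role in the proof and should not be presented as justifying the identification.
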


\begin{proof} This is a computer computation, where the field identification is given by, with $s=(1+k)/(4+2k)$,
\begin{equation}
\begin{split}
E\ &\rightarrow\  \tilde E, \qquad N\ \rightarrow\ \tilde N-\frac{\tilde E}{k},\qquad \Psi^\pm\ \rightarrow\  \tilde \Psi^\pm, \\
G^+\ &\rightarrow\ (4s-1)\tilde F^++s\partial\tilde\Psi^++(2s-1):\tilde N\tilde\Psi^+:,\\
 G^-\ &\rightarrow\ (4s-1)\tilde F^--(3s-1)\partial\tilde\Psi^--(2s-1):\tilde N\tilde\Psi^-:. 
\end{split}
\end{equation}
\end{proof}

\begin{remark}
There exist other realizations of $\cW_{2,k}$.
Let $E_{ij}$ for $1\leq i,j\leq 4$ be the basis of $\SLSA{gl}{2}{2}$ of Definition \ref{def:glnn}, and we denote the corresponding fields of $V_k(\SLSA{gl}{2}{2})$ by $E_{ij}(z)$.
Then, we find that for level $k=-2$ the fields 
\begin{equation*}
\begin{split}
E'&=-\frac{1}{2}(\sum_{i=1}^4 E_{ii}),\qquad\qquad\ \ \, N'=\frac{1}{2}(E_{11}+E_{22}-E_{33}-E_{44}),\\ 
{\Psi^+}'&=\frac{1}{\sqrt{-2}}(E_{13}+E_{24}) ,\qquad {\Psi^-}'=\frac{1}{\sqrt{-2}}(E_{31}+E_{42}),\\ 
\end{split}
\end{equation*}
\begin{equation*}
\begin{split}{G^+}'&= \frac{1}{\sqrt{-2}}(:E_{12}E_{23}:+:E_{21}E_{14}:+:(E_{11}-E_{22})(E_{13}-E_{24}):)\\
&\qquad-\frac{1}{2}\partial{\Psi^+}'-\frac{1}{2}:N'{\Psi^+}':+\frac{1}{4}:E'{\Psi^+}':,\\
{G^-}'&= \frac{1}{\sqrt{-2}}(:E_{12}E_{41}:+:E_{21}E_{32}:+:(E_{11}-E_{22})(E_{31}-E_{42}):)\\
&\qquad-\frac{1}{2}\partial{\Psi^-}'+\frac{1}{2}:N'{\Psi^-}':-\frac{1}{4}:E'{\Psi^-}':.\\
\end{split}
\end{equation*}
are elements of $\text{Com}(V_{0}(\SLA{sl}{2}),V_{-2}(\SLSA{gl}{2}{2}))$ and satisfy the operator product algebra of $\cW_{2,-1}$ where the field identification is given by $X \rightarrow X'$, for $X = E, N, \Psi^{\pm}, G^{\pm}$.

\end{remark}


\begin{thebibliography}{ABKS}

\bibitem[AFMO]{AFMO} H. Awata, M. Fukuma, Y. Matsuo, and S. Odake, \textit{Quasifinite highest weight
modules over the super $\cW_{1+\infty}$ algebra}, Comm. Math. Phys., 170 (1995), 151-179.
\bibitem[ASV]{ASV} M. Adler, T. Shiota, and P. van Moerbeke, \textit{From the $w_{\infty}$-algebra to its central extension: a $\tau$-function approach}, Phys. Lett. A 194 (1994), no. 1-2, 33-43.
\bibitem[BBSS]{BBSS}  F. A. Bais, P. Bouwknegt, M. Surridge, K. Schoutens, \textit{Coset construction for 
extended Virasoro algebras}, Nucl. Phys. B {\bf 304} (1988) 371.
\bibitem[B-H]{B-H} R. Blumenhagen, W. Eholzer, A. Honecker, K. Hornfeck, R. Hubel, \textit{Coset realizations of unifying $\cW$-algebras}, Int. Jour. Mod. Phys. Lett. A10 (1995) 2367-2430.
\bibitem[BFH]{BFH} J. de Boer, L. Feher, and A. Honecker, \textit{A class of $\cW$-algebras with infinitely generated classical limit}, Nucl. Phys. B420 (1994), 409-445.
\bibitem[B]{B} R. Borcherds, \textit{Vertex operator algebras, Kac-Moody algebras and the monster}, Proc. Nat. Acad. Sci. USA 83 (1986) 3068-3071.
\bibitem[BG]{BG} P. Bowcock, P. Goddard, \textit{Coset constructions and extended conformal algebras}, Nucl. Phys. B {\bf 305} (1988) 685.
\bibitem[BS]{BS}  P. Bouwknegt and K. Schoutens, \textit{W symmetry in conformal field theory},  Phys. Rept.  {\bf 223} (1993) 183. 
\bibitem[CHR]{CHR}  T. Creutzig, Y. Hikida and P. B. Ronne, \textit{Higher spin AdS$_3$ supergravity and its dual CFT},
  JHEP {\bf 1202} (2012) 109.
 \bibitem[CTZ]{CTZ} A. Cappelli, C. Trugenberger and G. Zemba, \textit{Classifications of quantum Hall universality classes by $\cW_{1+\infty}$ symmetry}, Phys. Rev. Lett. 72 (1994) 1902-1905.
\bibitem[CW]{CW} S-J. Cheng and W. Wang, \textit{Lie subalgebras of differential operators on the super circle}, Publ. RIMS, Kyoto Univ. 39 (2003), 545-600.
\bibitem[FBZ]{FBZ} E. Frenkel and D. Ben-Zvi, \textit{Vertex Algebras and Algebraic Curves}, Math. Surveys and Monographs, Vol. 88, American Math. Soc., 2001.
\bibitem[FF1]{FF1}B.L. Feigin and E. Frenkel, \textit{Quantization of Drinfeld-Sokolov reduction}, Phys. Lett. B 246 (1990) 75-81.
\bibitem[FF2]{FF2} B.L. Feigin and E. Frenkel, \textit{Representations of affine Kac-Moody algebras, bozonization and resolutions}, Lett. Math. Phys. 19 (1990) 307-317.
\bibitem[FHL]{FHL} I.B. Frenkel, Y.Z. Huang, and J. Lepowsky, \textit{On axiomatic approaches to vertex operator algebras and modules}, Mem. Amer. Math. Soc. 104 (1993), no. 494, viii+64.
\bibitem[FKRW]{FKRW} E. Frenkel, V. Kac, A. Radul, and W. Wang, \textit{$\cW_{1+\infty}$ and $\cW(\gg\gl_N)$ with central charge $N$}, Commun. Math. Phys. 170 (1995), 337-357.
\bibitem[FLM]{FLM} I.B. Frenkel, J. Lepowsky, and A. Meurman, \textit{Vertex Operator Algebras and the Monster}, Academic Press, New York, 1988.
\bibitem[FMS]{FMS} D. Friedan, E. Martinec, S. Shenker, \textit{Conformal invariance, supersymmetry and string theory}, Nucl. Phys. B271 (1986) 93-165.
\bibitem[GG]{GG}  M. R. Gaberdiel and R. Gopakumar, \textit{An AdS$_3$ Dual for Minimal Model CFTs},
  Phys. Rev. D {\bf 83} (2011) 066007.
\bibitem[GS]{GS} P. Goddard, A. Schwimmer, \textit{Unitary construction of extended conformal algebras},
  Phys. Lett. B {\bf 206} (1988) 62.
\bibitem[GK]{GK} M. Gorelik and V. Kac, \textit{On simplicity of vacuum modules}, Adv. Math. 211 (2007) 621-677.
\bibitem[I]{I}  K. Ito, \textit{The W algebra structure of N=2 CP(n) coset models},
  hep-th/9210143.
\bibitem[K]{K} V. Kac, \textit{Vertex Algebras for Beginners}, University Lecture Series, Vol. 10. American Math. Soc., 1998.
\bibitem[KP]{KP} V. Kac and D. Peterson, \textit{Spin and wedge representations of infinite-dimensional Lie algebras and groups}, Proc. Natl. Acad. Sci. USA 78 (1981), 3308-3312.
\bibitem[KRI]{KRI} V. Kac and A. Radul, \textit{Quasi-finite highest weight modules over the Lie algebra of differential operators on the circle}, Commun. Math. Phys. 157 (1993), 429-457.
\bibitem[KRII]{KRII} V. Kac and A. Radul, \textit{Representation theory of the vertex algebra $\cW_{1+\infty}$}, Transf. Groups, Vol 1 (1996) 41-70.
\bibitem[KRW]{KRW} V. Kac, S.-S. Roan, M. Wakimoto, \textit{Quantum reduction for affine superalgebras}, Comm. Math. Phys.  241  (2003),  no. 2-3, 307-342.
\bibitem[LiI]{LiI} H. Li, \textit{Local systems of vertex operators, vertex superalgebras and modules},
J. Pure Appl. Algebra 109 (1996), no. 2, 143--195.
\bibitem[LiII]{LiII} H. Li, \textit{Vertex algebras and vertex Poisson algebras}, Commun. Contemp. Math. 6 (2004) 61-110.
\bibitem[LL]{LL} B. Lian and A. Linshaw, \textit{Howe pairs in the theory of vertex algebras}, J. Algebra 317, 111-152 (2007).
\bibitem[LZ]{LZ} B. Lian and G.J. Zuckerman, \textit{Commutative quantum operator algebras}, J. Pure Appl. Algebra 100 (1995) no. 1-3, 117-139.
\bibitem[LI]{LI} A. Linshaw, \textit{Invariant theory and the $\cW_{1+\infty}$ algebra with negative integral central charge}, J. Eur. Math. Soc. 13, no. 6 (2011) 1737-1768.
\bibitem[LII]{LII} A. Linshaw, \textit{A Hilbert theorem for vertex algebras}, Transf. Groups, Vol. 15, No. 2 (2010), 427-448.
\bibitem[W]{W} W. Wang, \textit{$\mathcal{W}_{1+\infty}$ algebra, $\mathcal{W}_3$ algebra, and Friedan-Martinec-Shenker bosonization}, Commun. Math. Phys. 195 (1998), 95--111.
\bibitem[We]{We} H. Weyl, \textit{The Classical Groups: Their Invariants and Representations}, Princeton University Press, 1946.









\end{thebibliography}
\end{document}